\definecolor{ssw}{rgb}{0.1,0.45,0.1}
\newcommand{\shire}[1]{\mathcal{S} #1}
\newcommand{\mordor}[1]{\mathcal{W} #1}
\newcommand{\doom}[1]{\mathcal{T} #1}
\newcommand{\eps}{\varepsilon}
\title{On free energy barriers in Gaussian priors and failure of cold start MCMC for high-dimensional unimodal distributions}
\author{Afonso S. Bandeira, Antoine Maillard, Richard Nickl\footnote{Correspondence: nickl@maths.cam.ac.uk. RN would like to thank the Forschungsinstitut f\"ur Mathematik (FIM) at ETH Z\"urich for their hospitality during a sabbatical visit in spring 2022 where this research was initiated. }~, Sven Wang \\ \\ ETH Z\"urich, ETH Zürich, University of Cambridge, MIT}
\begin{document}

\maketitle

\abstract{We exhibit examples of high-dimensional unimodal posterior distributions arising in non-linear regression models with Gaussian process priors for which MCMC methods can take an exponential run-time to enter the regions where the bulk of the posterior measure concentrates. Our results apply to worst-case initialised (`cold start') algorithms that are local in the sense that their step-sizes cannot be too large on average. The counter-examples hold for general MCMC schemes based on gradient or random walk steps, and the theory is illustrated for Metropolis-Hastings adjusted methods such as pCN and MALA.}

\section{Introduction}\label{sec:intro}

Markov Chain Monte Carlo (MCMC) methods are the workhorse of Bayesian computation when closed formulas for estimators or probability distributions are not available. For this reason they have been central to the development and success of high-dimensional Bayesian statistics in the last decades, where one attempts to generate samples from some \textit{posterior distribution} $\Pi(\cdot|\mathrm{data})$ arising from a prior $\Pi$ on $D$-dimensional Euclidean space and the observed data vector. MCMC methods tend to perform well in a large variety of problems, are very flexible and user-friendly, and enjoy many theoretical guarantees. Under mild assumptions, they are known to converge to their stationary `target' distributions as a consequence of the ergodic theorem, albeit perhaps at a slow speed, requiring a large number of iterations to provide numerically accurate algorithms. When the target distribution is log-concave, MCMC algorithms are known to mix rapidly, even in high dimensions. But for general $D$-dimensional densities, we have only a restricted understanding of the scaling of the mixing time of Markov chains with $D$ or with the `informativeness' (sample size or noise level) of the data vector. 

A classical source of difficulty for MCMC algorithms are multi-modal distributions. When there is a deep well in the posterior density between the starting point of an MCMC algorithm and the location where the posterior is concentrated, many MCMC algorithms are known to take an exponential time --
proportional to the depth of the well -- when attempting to reach the target region, even in low-dimensional settings, see Fig.~\ref{subfig:energy_barrier} and also the discussion surrounding Proposition \ref{nubd} below. However, for distributions with a single mode and when the dimension $D$ is fixed, MCMC methods can usually be expected to perform well.

\begin{figure}[ht]
\centering
\begin{subfigure}{.45\textwidth}
  \centering
  \includegraphics[width=.9\textwidth]{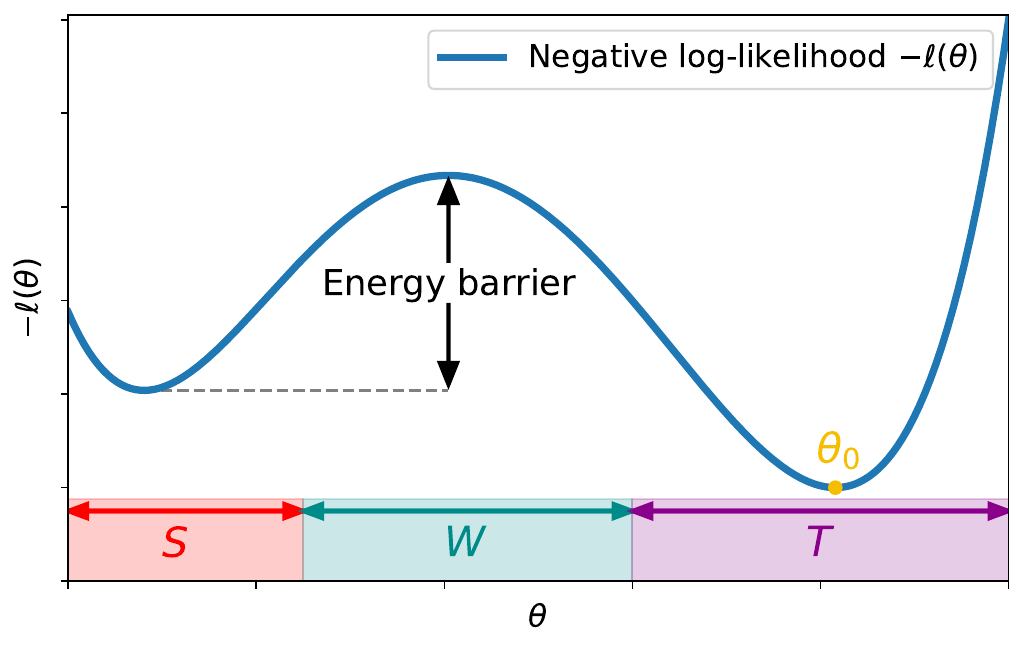}  
  \caption{In low dimensions (here $D = 1$), MCMC hardness usually arises because of a non-unimodal likelihood, creating an ``energy barrier'', even though the maximum likelihood is attained at $\theta = \theta_0$.
  The MCMC algorithm is assumed to be initialised in the set $\shire$ containing a local maximum of the likelihood.}
  \label{subfig:energy_barrier}
\end{subfigure}
\begin{subfigure}{.45\textwidth}
  \centering
  \includegraphics[width=.7\textwidth]{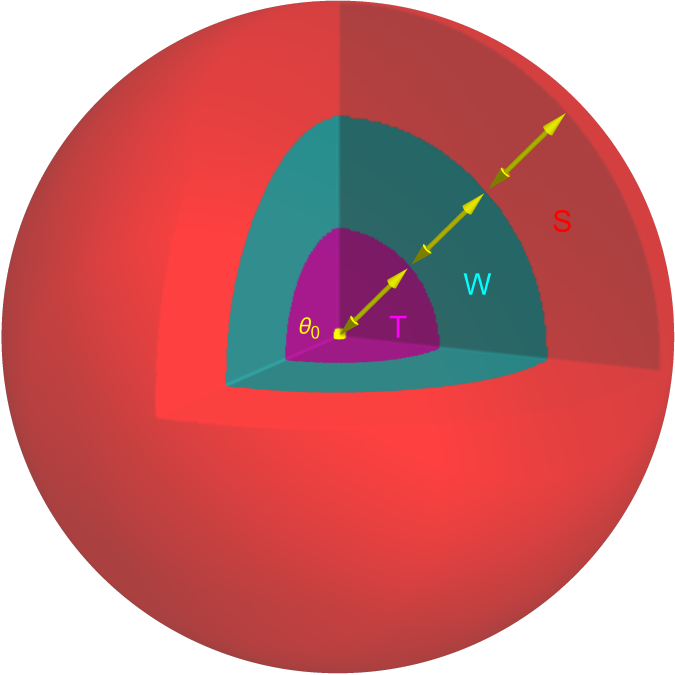}  
  \caption{Illustration of the arising of entropic (or volumetric) difficulties, here in dimension $D = 3$: the set of points close to $\theta_0$ has much less volume than the set of points far away. 
  As $D$ increases, this phenomenon is amplified: all ratios of volumes of the three sets $\doom, \mordor, \shire$ scale exponentially with $D$.}
  \label{subfig:sphere}
\end{subfigure}
\caption{
\small
Two possible sources of MCMC hardness in high dimensions: multimodal likelihoods and entropic barriers.\label{fig:origin_mcmc_hardness} }
\end{figure}

In essence this article is an attempt to explain how, in high dimensions, wells can be formed \textit{without} multi-modality of a given posterior distribution. The difficulty in this case is volumetric, also referred to as \textit{entropic}: while the target region contains most of the posterior mass, its (prior) volume is so small compared to the rest of the space that an MCMC algorithm may take an exponential time to find it, see Fig.~\ref{subfig:sphere}. 
This competition between `energy' -- here represented by the log-likelihood $\ell_N$ in the posterior distribution $d\Pi(\cdot|\mathrm{data}) = \exp \{\ell_N+ \log d \pi\}$ -- and `entropy' (related to the prior term $\pi$) has also been exploited in recent work on statistical aspects of MCMC in various high dimensional inference and statistical physics models \cite{anderson1989spin,mezard2009information,zdeborova2016statistical,arous2020algorithmic,arous2020free}. These ideas somewhat date back to the 19th century foundations of statistical mechanics \cite{gibbs1873method} and the notion of free energy, consisting of a sum of energetic and entropic contributions which the system spontaneously attempts to minimise. 
The ``MCMC-hardness'' phenomenon described above is then akin to the meta-stable behavior of thermodynamical systems, such as glasses or supercooled liquids. 
As the temperature decreases, such systems can undergo a ``first-order'' phase transition, in which a global free energy minimum (analoguous to the target region above) abruptly appears, while the system remains trapped in a suboptimal local minimum of the free energy (the starting region of the MCMC algorithm).
For the system to go to thermodynamic equilibrium it must cross an extensive free energy barrier: such a crossing requires an exponentially long time, so that the system appears equilibrated on all relevant timescales, similarly to the MCMC stuck in the starting region. Classical examples include glasses and the popular experiment of rapid freezing of supercooled water (i.e.\ water that remained liquid at negative temperatures) after introducing a perturbation. 

Inspired by recent work~\cite{arous2020algorithmic,arous2020free,bandeira2022franz}, let us illustrate some of the volumetric phenomena which are key to our results below.
We separate the parameter space into three regions (see Figures~\ref{fig:origin_mcmc_hardness} and \ref{fig:illustration_wells}), which we name by common MCMC terminology.
Firstly a \emph{starting} (or initialisation) region $\shire$, where an algorithm starts, secondly a \emph{target} region $\doom$ where both the bulk of the posterior mass and the ground truth are situated, and thirdly an intermediate \emph{Free-Entropy well}\footnote{As classical in statistical physics, we call free entropy the negative of the free energy.} $\mordor$ that separates $\shire$ from $\doom$\footnote{In a physical system these regions would correspond respectively to a region including a meta-stable state, a region including the globally stable state, and a free energy barrier.}.
In our theorems, these regions will be characterised by their Euclidean distance to the ground truth parameter $\theta_0$ generating the data.
The prior volumes of the $\epsilon$-annuli $\{\theta:r-\epsilon<\|\theta-\theta_0\|_2 \le r\}, r>0,$ closer to the ground truth are smaller than those further out as illustrated in Fig.~\ref{subfig:sphere}, and in high dimensions this effect becomes quantitative in an essential way. Specifically, the trade-off between the entropic and energetic terms can happen such that the following three statements are simultaneously true.
\begin{itemize}
    \item[$(i)$] $\doom$ contains ``almost all'' of the posterior mass.
    \item[$(ii)$]  As one gets closer to $\doom$ (and thus the ground truth $\theta_0$), the log-likelihood is strictly monotonically increasing.
    \item[$(iii)$] Yet $\shire$ still possesses exponentially more posterior mass than $\mordor$.
\end{itemize}
Using `bottleneck' arguments from Markov chain theory (Ch.7 in \cite{J03}), this means that an MCMC algorithm that starts in $\shire$ is expected to take an exponential time to visit $\mordor$.
If the step size is such that it cannot ``jump over'' $\mordor$, this also implies an exponential hitting time lower bound for reaching $\doom$.
This is illustrated in Figure~\ref{fig:illustration_wells} for an averaged version of the model described in Section~\ref{sec:spiked_tensor}.
\begin{figure}[ht]
\centering
\begin{subfigure}{.38\textwidth}
  \centering
  \includegraphics[width=\linewidth]{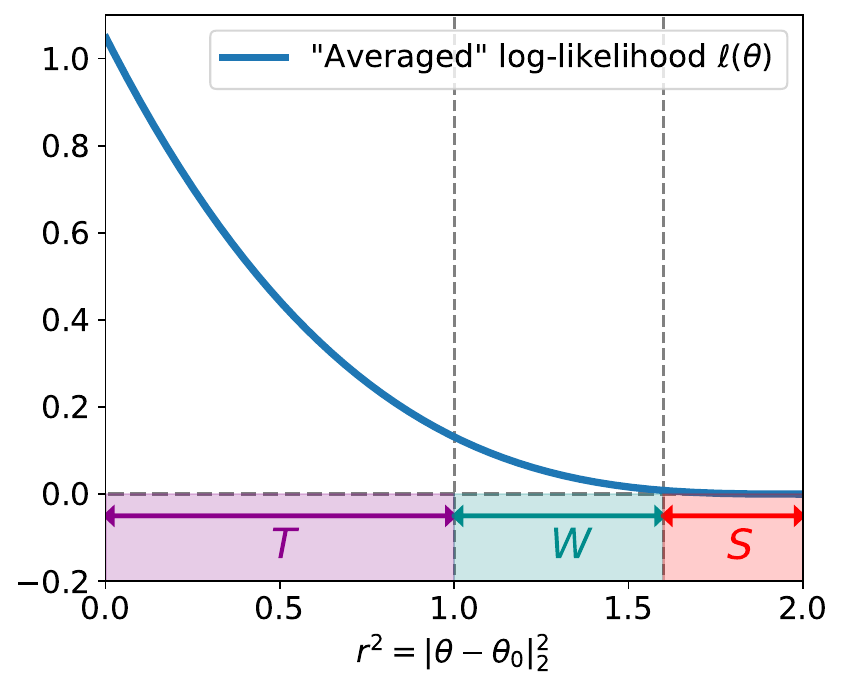}  
  \caption{}
  \label{subfig:likelihood_entropy}
\end{subfigure}
\begin{subfigure}{.6\textwidth}
  \centering
  \includegraphics[width=\linewidth]{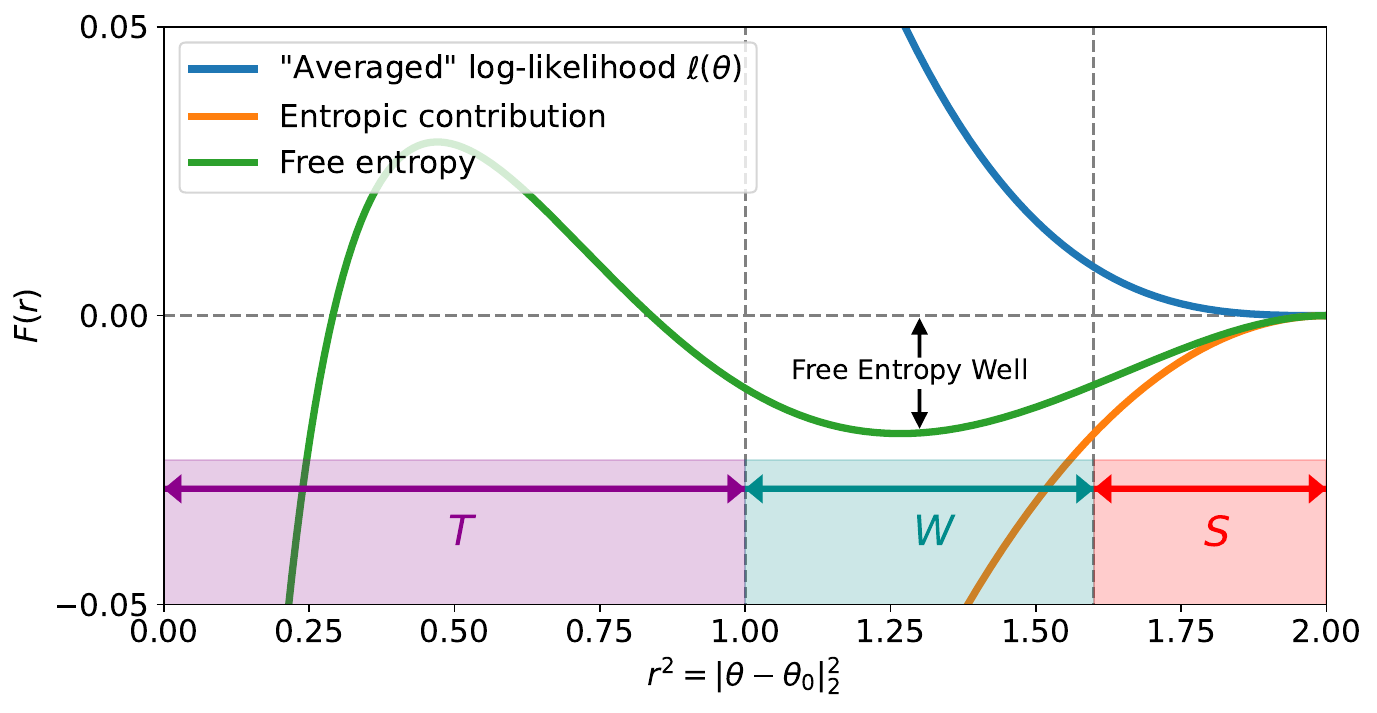}  
  \caption{}
  \label{subfig:fe_well}
\end{subfigure}
\caption{ 
\small
\label{fig:illustration_wells} Illustration of a free-energy barrier (or free entropy well) arising with a unimodal posterior. The model is an ``averaged'' version of the spiked tensor model, with log-likelihood $\ell_n(\theta) = \lambda \langle \theta, \theta_0 \rangle^3 / 2$ and uniform prior $\Pi$ on the $n$-dimensional unit sphere $\bbS^{n-1}$. $\theta_0$ is chosen arbitrarily on $\bbS^{n-1}$.
The posterior is $\rd \Pi(\theta | Y) \propto \exp\{n \ell_n(\theta)\} \rd\Pi(\theta) $, for $\theta \in \bbS^{n-1}$.
Up to a constant, the free entropy $F(r) = (1/n) \log \int \rd \Pi(\theta | Y) \delta(r - \|\theta-\theta_0\|_2)$ can be decomposed as the sum of $\ell_n(\theta)$ (that only depends on $r = \|\theta-\theta_0\|_2$) and the ``entropic'' contribution 
$(1/n) \log\int \rd \Pi(\theta) \delta(r- \|\theta-\theta_0\|_2) $.
In the figure we show $\lambda = 2.1$.
}
\end{figure}

In the situation described above, the MCMC iterates never visit the region where the posterior is statistically informative, and hence yield no better inference than a random number generator.
One could regard this as a `hardness' result about computation of posterior distributions in high dimensions by MCMC.
In this work we show that such situations can occur generically and establish hitting time lower bounds for common gradient or random walk based MCMC schemes in model problems with non-linear regression and Gaussian process priors.
Before doing this, we briefly review some important results of \cite{arous2020algorithmic} for the problem of Tensor PCA, from which the inspiration for our work was drawn.
This technique to establish lower bounds for MCMC algorithms has also recently been leveraged in \cite{arous2020free} in the context of sparse PCA, and in~\cite{bandeira2022franz} to establish connections between MCMC lower bounds and the Low Degree Method for algorithmic hardness predictions (see~\cite{kunisky2022notes} for an expository note on this technique).

When the target distribution is globally log-concave, pictures such as in Fig.~\ref{fig:illustration_wells} are ruled out (see also Remark \ref{logcn}) and polynomial-time mixing bounds have been shown for a variety of commonly used MCMC methods. While an exhaustive discussion would be beyond the scope of this paper, we mention here the seminal works \cite{D17, DM19} which were among the first to demonstrate high-dimensional mixing of discretised Langevin methods (even upon `cold-start' initialisations like the ones assumed in the present paper). In concrete non-linear regression models, polynomial-time computation guarantees were given in \cite{NW20} under a general `gradient stability' condition on the regression map which guarantees that the posterior is (with high probability) locally log-concave on a large enough region including $\theta_0$. While this condition can be expected to hold under natural injectivity hypotheses and was verified for an inverse problem with the Schr\"odinger equation in \cite{NW20}, for non-Abelian X-ray transforms in \cite{BN21}, the `Darcy flow' elliptic PDE model in \cite{N22} and for generalized linear models in \cite{A22}, all these results hinge on the existence of a suitable initialiser of the gradient MCMC scheme used. These results form part of a larger research program \cite{N20, MNP19, MNP21, MNP21b, N22} on algorithmic and statistical guarantees for Bayesian inversions methods \cite{S10} applied to problems with partial differential equations. The present article shows that the hypothesis of existence of a suitable initialiser is -- at least in principle -- essential in these results if $D/N \to \kappa>0$, and that at most `moderately' high-dimensional ($D=\smallO(N)$) MCMC implementations of Gaussian process priors may be preferable to bypass computational bottlenecks.

Our negative results apply to (worst-case initialised) Markov chains whose step-sizes cannot be too large with high  probability. As we show this includes many commonly used algorithms (such as pCN and MALA) whose dynamics are of a `local' nature. 
There are a variety of MCMC methods developed recently, such as piece-wise deterministic Markov processes, boomerang or zig-zag samplers \cite{FBPR18, BCVD18, BGKR20, WR20} which may not fall into our framework.
While we are not aware of any rigorous results that would establish polynomial hitting or mixing times of these algorithms for high-dimensional posterior distributions such as those exhibited here, it is of great interest to study whether our computational hardness barriers can be overcome by `non-local' methods. There is some empirical evidence that this may be possible. 
For instance, in the numerical simulation of models of supercooled liquids \cite{SGB22}, methods such as swap Monte Carlo \cite{GP01} have been observed to equilibrate to low-temperature distributions which were not reachable by local approaches.
Another example is given by the planted clique problem \cite{jerrum1992large}: this model is conjectured to possess a large algorithmically hard phase, 
and local Monte Carlo methods are known to fail far from the conjectured algorithmic threshold \cite{gamarnik2019landscape,angelini2021mismatching,chen2022almost}. 
On the other hand, non-local exchange Monte Carlo methods (such as parallel tempering \cite{hukushima1996exchange}), have been numerically observed to perform significantly better \cite{chiara2018parallel}.
%recover the signal even inside the hard phase in polynomial time \cite{chiara2018parallel}.}  %although the investigation of possible finite-size effects is hard in this context.

 \section{The spiked tensor model: an illustrative example}\label{sec:spiked_tensor}

In this section, we present (a simplified version of) results obtained mostly in \cite{arous2020algorithmic}. First some notation.
For any $n \geq 1$, we denote by $\bbS^{n-1} = \{\theta \in \bbR^n: \, \|\theta\|_2 = 1 \}$ the Euclidean unit sphere in $n$ dimensions.
For $\theta,\theta' \in \bbR^n$ we denote $\theta \otimes \theta' = (\theta_i \theta'_j)_{1 \leq i,j \leq n} \in \bbR^{n^2}$ their tensor product.

\myskip
Spiked tensor estimation is a synthetic model to study tensor PCA, and corresponds to a Gaussian Additive Model with a low-rank prior.
More formally, it can be defined as follows \cite{richard2014statistical}.
\begin{definition}[Spiked tensor model]\label{def:spiked_tensor}
Let $p\geq 3$ denote the order of the tensor.
The observations $Y$ and the parameter $\theta$ are generated according to the following joint probability distribution: 
\begin{align}\label{eq:joint_Ytheta_spiked_tensor}
    \rd \bbQ(Y, \theta) = \frac{1}{(2\pi)^{n^p/2}}\exp\Big\{-\frac{1}{2} \big\|Y - \sqrt{n} \lambda \theta^{\otimes p}\big\|_2^2\Big\}\rd \Pi(\theta) \, \rd Y.
\end{align}
Here, $\rd Y$ denotes the Lebesgue measure on the space $(\bbR^n)^{\otimes p} = \bbR^{n^p}$ of $p$-tensors of size $n$.
$\Pi$ is the uniform probability measure on $\bbS^{n-1}$, and $\lambda \geq 0$ is the signal-to-noise ratio (SNR) parameter.
In particular, the posterior distribution $\Pi(\theta | Y)$ is:
\begin{align}\label{eq:def_posterior}
    \rd \Pi(\theta | Y) &=
\frac{1}{\mcZ_Y} \exp(\ell_{n,Y}(\theta)) \rd\Pi(\theta),
\end{align}
in which $\mcZ_Y$ is a normalization, and we defined the \emph{log-likelihood} (up to additive constants) as:
    \begin{align}\label{eq:def_likelihood}
        \ell_{n,Y}(\theta) = \frac12 \sqrt{n}\lambda \langle \theta^{\otimes p},Y \rangle.
    \end{align}
\end{definition}
 \noindent
 In the following, we study the model from Definition~\ref{def:spiked_tensor} via the prism of statistical inference. In particular, we will study the posterior $\Pi(\theta|Y)$ for a fixed\footnote{Note that we assume here that the statistician has access to the distribution $\Pi(\cdot | Y)$ (and in particular to $\lambda$), a setting sometimes called \emph{Bayes-optimal} in the literature.} ``data tensor'' $Y$. Since such a tensor was generated according to the marginal of \eqref{eq:joint_Ytheta_spiked_tensor}, we parameterise it as $Y = \lambda \sqrt{n} \theta_0^{\otimes p} + Z$, with $Z$ a $p$-tensor with i.i.d.\ $\mcN(0,1)$ coordinates, and $\theta_0$ a ``ground-truth'' vector uniformly-sampled in $\bbS^{n-1}$. 
 The goal of our inference task is to recover information on the low-rank perturbation $\theta_0^{\otimes p}$ (or equivalently on the vector $\theta_0$, possibly up to a global sign depending on the parity of $p$) from the posterior distribution $\Pi(\cdot | Y)$.

    Crucially, we are interested in the limit of the model of Definition~\ref{def:spiked_tensor} as $n \to \infty$. In particular, all our statements, although sometimes non-asymptotic, are to be interpreted as $n$ grows. 
    We say that an event occurs ``with high probability'' (w.h.p.) when its probability is $1-\smallO_n(1)$\footnote{Often the $\smallO_n(1)$ term will be exponentially small, but we will not require such a strong control.}.
    Moreover, by rotation invariance, all statements are uniform over $\theta_0 \in \bbS^{n-1}$, so that said probabilities only refer to the noise tensor $Z$.
Finally, 
throughout our discussion we will work with latitude intervals (or bands) on the sphere, with the North Pole taken to be $\theta_0$.
We characterise them using inner products (correlations) $\langle \theta, \theta_0\rangle$ for odd $p$, and $|\langle \theta, \theta_0\rangle|$ for even $p$ (since in this case $\theta_0$ and $-\theta_0$ are indistinguishable from the point of view of the observer).
\begin{definition}[Latitude intervals]
    \label{def:lotr}
    Assume that $p \geq 3$ is even. For $0 \leq s < t \leq 1$ we define:
\begin{itemize}
    \item $\shire_s = \left\{ \theta\in \bbS^{n-1}: |\langle \theta,\theta_0 \rangle| \leq s \right\}$,
    \item $\mordor_{s,t} = \left\{ \theta\in \bbS^{n-1}: s < |\langle \theta,\theta_0 \rangle| \leq t \right\}$,
    \item $\doom_t = \left\{ \theta\in \bbS^{n-1}: t<|\langle \theta,\theta_0 \rangle|  \right\}$.
\end{itemize}
If $p$ is odd, we define these sets similarly, replacing $|\langle \theta,\theta_0\rangle|$ by $\langle \theta,\theta_0\rangle$.
\end{definition}
\noindent
Note that these sets can also be characterised using the distance to the ground-truth, e.g.\ $\shire_s = \left\{ \theta\in \bbS^{n-1}: \min\{\|\theta-\theta_0\|_2^2,\|\theta+\theta_0\|_2^2\}| \geq 2 (1-s) \right\}$ when $p$ is even.

\subsection{Posterior contraction}\label{subsec:posterior_contraction}

\noindent
We can use uniform concentration of the likelihood to show that as $\lambda \to \infty$ (\emph{after} taking the limit $n \to \infty$) the posterior contracts 
in a region infinitesimally close to the ground-truth $\theta_0$. We first show that a region arbitrarily close to the ground truth exponentially dominates a very large starting region:
\begin{proposition}\label{prop:posterior_contraction_technical}
    For any $K > 0$ there exists $\lambda_0 > 0$ and functions $\{s(\lambda),t(\lambda)\} \in [0,1)$ such that $s(\lambda) < t(\lambda)$,
    $\{s(\lambda),t(\lambda)\} \to 1$ as $\lambda \to \infty$, and for all $\lambda \geq \lambda_0$:
    \begin{equation}
        \limsup_{n \to \infty} \frac{1}{n} \log \frac{\Pi(\shire_{s(\lambda)} |Y)}{\Pi(\doom_{t(\lambda)}|Y)} \leq - K, \hspace{0.5cm} \textrm{almost surely.}
    \end{equation}
\end{proposition}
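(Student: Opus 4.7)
The plan is a classical entropy-vs-energy Laplace argument: bound the numerator of the ratio by the supremum of the likelihood on $\shire_{s(\lambda)}$, and the denominator by the infimum on $\doom_{t(\lambda)}$ multiplied by the prior mass of that cap, then calibrate $s(\lambda), t(\lambda)$ so that the energy gap overwhelms the entropic cost. Writing $Y = \lambda\sqrt{n}\,\theta_0^{\otimes p} + Z$, the log-likelihood splits as
\[
\ell_{n,Y}(\theta) \;=\; \frac{n\lambda^2}{2}\langle\theta,\theta_0\rangle^p \;+\; \frac{\sqrt{n}\,\lambda}{2}\langle\theta^{\otimes p}, Z\rangle,
\]
and on $\shire_s$ (resp.\ $\doom_t$) the deterministic term is $\leq \tfrac{n\lambda^2}{2} s^p$ (resp.\ $\geq \tfrac{n\lambda^2}{2} t^p$) for both parities of $p$, since $|\langle\theta,\theta_0\rangle|\leq s$ (resp.\ $>t$) on the respective regions. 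The normaliser $\mcZ_Y$ cancels in the ratio, so only these two unnormalised integrals matter.

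First, I would control the noise term uniformly on the sphere by showing that, almost surely as $n\to\infty$,
\[
\sup_{\theta\in\bbS^{n-1}}|\langle\theta^{\otimes p}, Z\rangle| \;=\; O(\sqrt{n\log n}),
\]
so that its contribution to $n^{-1}\ell_{n,Y}$ is uniformly $o(1)$. This is a bound on the injective norm of a Gaussian $p$-tensor: a standard $\varepsilon$-net argument on $\bbS^{n-1}$ produces the correct order in expectation, and the Borell-Tsirelson-Ibragimov-Sudakov inequality applied to the $1$-Lipschitz map $Z\mapsto\sup_{\theta}\langle\theta^{\otimes p},Z\rangle$ yields sub-Gaussian tails around that mean. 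Summability of these tails in $n$ then upgrades the in-probability estimate to an a.s.\ statement via Borel-Cantelli.

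Second, I would combine this with the standard spherical-cap volume asymptotics
\[
\frac{1}{n}\log \Pi(\doom_t) \;=\; \frac{1}{2}\log(1-t^2) + O(n^{-1}\log n),
\]
obtained from the coarea formula on $\bbS^{n-1}$ (equivalently, from the Beta law of $\langle\theta,\theta_0\rangle^2$ under uniform $\theta$). Together with $\Pi(\shire_s)\leq 1$ and the Laplace bounds above, this yields, almost surely,
\[
\limsup_{n\to\infty}\frac{1}{n}\log\frac{\Pi(\shire_s|Y)}{\Pi(\doom_t|Y)} \;\leq\; \frac{\lambda^2}{2}(s^p - t^p) - \frac{1}{2}\log(1-t^2).
\]

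Finally, I would calibrate $s(\lambda)<t(\lambda)$, both tending to $1$, so that the right-hand side is $\leq -K$ for all large $\lambda$. Setting $t(\lambda) = 1 - a(\lambda)/\lambda^2$ and $s(\lambda) = 1 - 2a(\lambda)/\lambda^2$ and Taylor-expanding around $1$, the RHS equals $-\tfrac{p}{2}a(\lambda) + \log\lambda + O(\log a(\lambda))$; choosing for instance $a(\lambda) = (2/p)(K + \log\lambda + C)$ with $C$ a sufficiently large absolute constant does the job, while $a(\lambda)/\lambda^2\to 0$ guarantees $s(\lambda), t(\lambda)\to 1$. The main obstacle is the uniform noise control of the first step: passing from a single $\theta$ to the worst case over $\bbS^{n-1}$ requires the tensor-norm estimate, and the a.s.\ (rather than merely in-probability) conclusion demands summable Gaussian tails. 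The rest is sphere geometry and a Taylor expansion.
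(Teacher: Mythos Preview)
Your overall strategy matches the paper's exactly: bound the injective norm of $Z$ uniformly on the sphere, apply Laplace bounds on numerator and denominator, use the $\tfrac12\log(1-t^2)$ asymptotic for $\Pi(\doom_t)$, and then calibrate $s(\lambda),t(\lambda)\to 1$. The gap is in your treatment of the noise term.

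You write $\sup_{\theta\in\bbS^{n-1}}|\langle\theta^{\otimes p},Z\rangle|=O(\sqrt{n\log n})$ and conclude that its contribution to $n^{-1}\ell_{n,Y}$ is $o(1)$. Neither claim is correct. The noise piece of $n^{-1}\ell_{n,Y}(\theta)$ is $\tfrac{\lambda}{2\sqrt n}\langle\theta^{\otimes p},Z\rangle$, so a bound of order $\sqrt{n\log n}$ on the injective norm would yield a term of order $\lambda\sqrt{\log n}$, which diverges and makes the $\limsup$ infinite. What one actually needs (and what the $\varepsilon$-net argument gives, without any $\log$ factor) is
\[
\limsup_{n\to\infty}\,n^{-1/2}\sup_{\theta\in\bbS^{n-1}}|\langle\theta^{\otimes p},Z\rangle|\le C_p\quad\text{a.s.},
\]
which produces a contribution of size $\lambda C_p$ --- finite for each $\lambda$, but \emph{not} $o(1)$. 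The paper's bound is therefore
\[
\limsup_{n\to\infty}\frac1n\log\frac{\Pi(\shire_s|Y)}{\Pi(\doom_t|Y)}\;\le\;\lambda C_p+\frac{\lambda^2}{2}(s^p-t^p)-\frac12\log(1-t^2),
\]
with the extra linear-in-$\lambda$ term that you dropped. This also breaks your calibration: with $t=1-a/\lambda^2$, $s=1-2a/\lambda^2$, the right-hand side is $\lambda C_p-\tfrac{p}{2}a+\log\lambda+O(1)$, so choosing $a(\lambda)\sim\log\lambda$ as you propose cannot absorb $\lambda C_p$. One needs $a(\lambda)$ of order $\lambda$ (still giving $a/\lambda^2\to 0$, hence $s,t\to 1$). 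Once you fix the injective-norm scale and keep the $\lambda C_p$ term, the argument is the paper's.
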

\noindent
Posterior contraction is the content of the following result:
\begin{corollary}[Posterior contraction]\label{cor:posterior_contraction}
    There exists $\lambda_0 > 0$ 
    and a function $s(\lambda) \in [0,1)$ satisfying $s(\lambda) \to 1$ as $\lambda \to \infty$, 
    such that for all $\lambda \geq \lambda_0$:
    \begin{equation}
        \lim_{n \to \infty} \Pi[\doom_{s(\lambda)}|Y] = 1, \hspace{0.5cm} \textrm{almost surely.}
    \end{equation}
\end{corollary}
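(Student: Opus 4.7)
The plan is to deduce the Corollary essentially immediately from Proposition~\ref{prop:posterior_contraction_technical} by observing that $\shire_s$ and $\doom_s$ are complementary in $\bbS^{n-1}$. Concretely, fix any $K > 0$ (e.g.\ $K = 1$) and let $\lambda_0, s(\lambda), t(\lambda)$ be the objects produced by Proposition~\ref{prop:posterior_contraction_technical}. I will use the \emph{same} function $s(\lambda)$ in the Corollary.

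The key identity is that, by Definition~\ref{def:lotr}, for every $\lambda$ one has $\shire_{s(\lambda)} \cup \doom_{s(\lambda)} = \bbS^{n-1}$ and $\shire_{s(\lambda)} \cap \doom_{s(\lambda)} = \emptyset$, so
\begin{equation*}
\Pi(\doom_{s(\lambda)}|Y) = 1 - \Pi(\shire_{s(\lambda)}|Y).
\end{equation*}
Since $s(\lambda) < t(\lambda)$, we also have the inclusion $\doom_{t(\lambda)} \subseteq \doom_{s(\lambda)}$, so $\Pi(\doom_{t(\lambda)}|Y) \leq 1$. Combined with Proposition~\ref{prop:posterior_contraction_technical}, this yields that almost surely, for all $\lambda \geq \lambda_0$ and all $n$ large enough,
\begin{equation*}
\Pi(\shire_{s(\lambda)}|Y) \leq e^{-nK/2}\,\Pi(\doom_{t(\lambda)}|Y) \leq e^{-nK/2}.
\end{equation*}
Plugging this into the identity above gives $\Pi(\doom_{s(\lambda)}|Y) \geq 1 - e^{-nK/2}$, so $\lim_{n \to \infty}\Pi(\doom_{s(\lambda)}|Y) = 1$ almost surely, as required.

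\paragraph{Expected difficulty.} There is essentially no obstacle: the Corollary is a direct structural consequence of Proposition~\ref{prop:posterior_contraction_technical}, and all the probabilistic work (uniform concentration of the likelihood, volumetric estimates for spherical caps, control of $\mcZ_Y$) has already been carried out in the proof of that Proposition. The only thing to verify is that the function $s(\lambda)$ furnished by the Proposition satisfies $s(\lambda) \to 1$ as $\lambda \to \infty$, which is part of its statement, and that it lies in $[0,1)$, which is also ensured. Hence the proof is really a two-line consequence of the partition of $\bbS^{n-1}$ into $\shire_{s(\lambda)}$ and $\doom_{s(\lambda)}$.
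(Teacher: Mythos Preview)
Your proposal is correct and follows essentially the same approach as the paper's proof: both fix $K=1$, use the partition $\bbS^{n-1}=\shire_{s(\lambda)}\cup\doom_{s(\lambda)}$, and combine the ratio bound from Proposition~\ref{prop:posterior_contraction_technical} with the trivial inequality $\Pi(\doom_{t(\lambda)}|Y)\le 1$ (equivalently, $\doom_{t(\lambda)}\subseteq\doom_{s(\lambda)}$). The paper phrases the last step as $\Pi(\doom_s|Y)=[1+\Pi(\shire_s|Y)/\Pi(\doom_s|Y)]^{-1}\ge (1+e^{-n})^{-1}$, whereas you write the equivalent $\Pi(\doom_s|Y)\ge 1-e^{-nK/2}$; this is purely cosmetic.
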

\noindent
The proofs of Proposition~\ref{prop:posterior_contraction_technical} and Corollary~\ref{cor:posterior_contraction} are given in Appendix~\ref{sec_app:proofs_spiked_tensor}.

\begin{remark}[Suboptimality of uniform bounds]\label{remark:suboptimality_uniform_bound} \normalfont
Stronger than Corollary~\ref{cor:posterior_contraction}, it is known that there exists a sharp threshold $\lambda^\star(p)$ such that for any $\lambda > \lambda^\star(p)$ the posterior mean, as well as the maximum likelihood estimator, 
sit w.h.p.\ in $\doom_{s(\lambda)}$, with $s(\lambda) > 0$, while such a statement is false for $\lambda \leq \lambda^\star(p)$\cite{perry2020statistical,lesieur2017statistical,jagannath2020statistical}.
The $\lambda_0$ given by Corollary~\ref{cor:posterior_contraction} is, on the other hand, clearly not sharp, 
because of the crude uniform bound used in the proof. 
This can easily be understood in the $p = 2$ case, corresponding to rank-one matrix estimation: uniform bounds such as the ones used here would show posterior contraction 
for $\lambda = \omega(1)$, while it is known through the celebrated BBP transition
that the maximum likelihood estimator is already correlated with the signal for any $\lambda>1$ \cite{baik2005phase}.
With more refined techniques from the study of random matrices and spin glass theory of statistical physics
it is often possible to obtain precise constants for such relevant thresholds.
\end{remark}

\subsection{Algorithmic bottleneck for MCMC}\label{subsec:algorithmic_bottleneck}

\noindent
Simple volume arguments, associated with an ingenious use of Markov's inequality due to \cite{arous2020algorithmic}
and of the rotation-invariance of the noise tensor $Z$, 
allow to get a computational hardness result for MCMC algorithms, even though the posterior contracts infinitesimally close to the ground truth as we saw in Corollary~\ref{cor:posterior_contraction}.
In the context of the spiked tensor model, these computational hardness results can be found in \cite{arous2020algorithmic} (see in particular Section~7).
We will state similar results for general non-linear regression models in Section~\ref{sec:results_gaussian_priors}: in this context we will not need to use the Markov's inequality-based technique of \cite{arous2020algorithmic}, and will solely rely on concentration arguments.

\myskip
Recall that by Section~\ref{subsec:posterior_contraction}, we can find $s(\lambda)$ such that $s(\lambda) \to 1$ as $\lambda \to \infty$
and for all $\lambda$ large enough $\Pi(\doom_{s(\lambda)}|Y) = 1 - \smallO_n(1)$.
Here, we show that escaping the ``initialisation'' region of the MCMC algorithm is hard in a large range of $\lambda$ (possibly diverging with $n$). 
In what follows, the step size of the algorithm denotes the maximal change $\norm{x^{t+1} - x^t}_2$ allowed in any iteration\footnote{
As we will detail in the following sections, see Assumption~\ref{ass:step-eta}, the statements remain true if the change is allowed to be higher than the required maximum with exponentially small probability.
}.
We first state this bottleneck result informally.
\begin{proposition}[MCMC Bottleneck, informal]\label{prop:bottleneck}
    Assume that $\lambda = \smallO(n^{(p-2)/4 + \eta})$ for all $\eta > 0$.
    Then any MCMC algorithm whose invariant distribution is $\Pi(\cdot|Y)$, and with a step size bounded by $\delta = \mathcal{O}([n\lambda^2]^{-1/p})$, will take an exponential time to get out of the 
    ``initialisation'' region.
\end{proposition}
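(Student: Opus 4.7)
The plan is to apply a bottleneck argument in the style of Chapter~7 of~\cite{J03}, combined with an exponential separation between the prior mass of the initialisation region and that of a narrow barrier band. Choose radii $s<t$ in $(0,1)$ with $s \asymp (\log n/\lambda^2)^{1/(2p)}$ and $t = s + 2\delta$, so that $t-s > \delta$ precludes any single step from jumping from $\mathcal{S}_s$ over $\mathcal{W}_{s,t}$ directly into $\mathcal{T}_t$; moreover $s\to 0$ but $ns^2 \to \infty$ in the regime $\lambda = o(n^{(p-2)/4+\eta})$, which keeps the signal inside $\mathcal{W}_{s,t}$ uniformly subleading. Using the explicit latitude density $\propto (1-x^2)^{(n-3)/2}$ of $\langle\theta,\theta_0\rangle$ under the uniform prior on $\mathbb{S}^{n-1}$, Laplace's method yields
\begin{align*}
\Pi(\mathcal{S}_s) = 1 - o(1), \qquad \Pi(\mathcal{W}_{s,t}) \,\leq\, C\sqrt{n}\,(t-s)\,(1-s^2)^{(n-3)/2} \,\leq\, e^{-c_1 n s^2},
\end{align*}
which is the required prior volume gap.

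Next, write $Y = \sqrt{n}\lambda\theta_0^{\otimes p} + Z$, so that $\ell_{n,Y}(\theta) = \tfrac12 n\lambda^2\langle\theta,\theta_0\rangle^p + \tfrac12\sqrt{n}\lambda\langle\theta^{\otimes p},Z\rangle$. A standard $\varepsilon$-net bound on the spectral norm of a Gaussian $p$-tensor gives $\sup_{\theta\in\mathbb{S}^{n-1}}|\langle\theta^{\otimes p},Z\rangle| \leq C\sqrt{n}$ with probability $1-o(1)$, so on this event the numerator of the posterior ratio is dominated by
\begin{align*}
\Pi(\mathcal{W}_{s,t}|Y) \,\leq\, \frac{\Pi(\mathcal{W}_{s,t})}{\mathcal{Z}_Y}\exp\!\Bigl(\tfrac12 n\lambda^2 t^p + \tfrac12 C\, n\lambda\Bigr).
\end{align*}
For the denominator, apply the rotation-invariance trick from~\cite{arous2020algorithmic}: since the prior is uniform on the sphere and $Z$ is rotationally invariant, an annealed Markov-inequality argument yields $\mathcal{Z}_Y\cdot\Pi(\mathcal{S}_s|Y) \geq \Pi(\mathcal{S}_s)\cdot e^{-o(ns^2)}$ with probability $1-o(1)$ in $Y$. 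Combining these three estimates with the scaling $\lambda = o(n^{(p-2)/4+\eta})$ produces a posterior gap $\Pi(\mathcal{W}_{s,t}|Y)/\Pi(\mathcal{S}_s|Y) \leq e^{-c_2 n s^2}$ w.h.p., and the bottleneck lemma of~\cite{J03} then forces the expected hitting time of $\mathcal{T}_t$ starting from any point of $\mathcal{S}_s$ to be at least $e^{c_2 n s^2}$, which is exponentially large.

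The main obstacle is the lower bound on $\Pi(\mathcal{S}_s|Y)$: a naive Jensen estimate $\mathbb{E}_\Pi[e^{\ell_{n,Y}}\,|\,\mathcal{S}_s]\geq \exp(\mathbb{E}_\Pi[\ell_{n,Y}\,|\,\mathcal{S}_s])$ only controls the mean log-likelihood in $\mathcal{S}_s$, but to deduce a posterior lower bound one must also upper-bound the normaliser $\mathcal{Z}_Y$, which is delicate because a single atypical tensor $Z$ could inflate the likelihood inside $\mathcal{T}_t$. The Arous--Gheissari--Jagannath device sidesteps this by working directly in expectation over $Y$, converting the prior volume gap into a $Y$-probability statement without ever conditioning on the fine structure of $Z$. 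Calibrating the constants in this conversion is what fixes both the range $\lambda = o(n^{(p-2)/4+\eta})$ and the step-size constraint $\delta = \mathcal{O}((n\lambda^2)^{-1/p})$: each is tuned so that the signal contribution $n\lambda^2 t^p$ and the uniform noise contribution $n\lambda$ inside $\mathcal{W}_{s,t}$ remain smaller than the volume gap $ns^2$.
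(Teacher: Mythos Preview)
Your overall architecture is the one the paper uses: establish the posterior–ratio (``free energy well'') estimate of Lemma~\ref{lemma:bottleneck_formal} with $s=r(\varepsilon)=n^{-1/2+\varepsilon}$ and $t=s+\delta$, and then feed it into the conductance argument of Proposition~\ref{prop:jerrum_simplified}. However, your execution of the free-energy-well step contains a genuine gap in the handling of the noise.

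You control the numerator via the uniform injective-norm bound $\sup_{\theta}|\langle\theta^{\otimes p},Z\rangle|\le C\sqrt n$, which produces the term $\tfrac12 Cn\lambda$ in the exponent. For your displayed inequality to yield $\Pi(\mathcal W_{s,t}|Y)/\Pi(\mathcal S_s|Y)\le e^{-c_2 ns^2}$ you would then need $n\lambda=o(ns^2)$, i.e.\ $\lambda=o(s^2)<o(1)$; but the proposition is precisely about $\lambda$ growing up to $n^{(p-2)/4}$. Your closing sentence asserts that ``the uniform noise contribution $n\lambda$ \dots remain[s] smaller than the volume gap $ns^2$'', and this is simply false for every $\lambda\ge 1$. (The same uniform bound \emph{does} succeed in the contraction proof of Proposition~\ref{prop:posterior_contraction_technical}, because there one works near $|m|\approx 1$ where the signal term $n\lambda^2(t^p-s^p)\sim n\lambda^2$ dominates $n\lambda$; near the equator the signal is tiny and the sup bound swamps everything.) Relatedly, your scale $s\asymp(\log n/\lambda^2)^{1/(2p)}$ diverges for constant $\lambda$, so it cannot be correct; the paper's scale is $s=n^{-1/2+\varepsilon}$.

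What the paper invokes from~\cite{arous2020algorithmic} is \emph{not} a quenched sup bound on $Z$ but an annealed one: Markov's inequality is applied to $\mathbb E_Z\!\int_{\mathcal W_{s,t}}e^{\ell_{n,Y}}\,d\Pi$, where for each fixed $\theta$ the noise $\tfrac12\sqrt n\,\lambda\langle\theta^{\otimes p},Z\rangle$ is a centred Gaussian of variance $\tfrac14 n\lambda^2$, contributing a factor that does not depend on $\theta$ or on the band. A matching lower bound for $\int_{\mathcal S_s}e^{\ell_{n,Y}}\,d\Pi$ (this is where rotation invariance is used) carries the same factor, so the noise effectively cancels in the ratio. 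What remains is only the signal increment $\tfrac12 n\lambda^2 t^p$ competing with the entropy gap $c\,ns^2$; with $s=n^{-1/2+\varepsilon}$ and $\delta=\mathcal O((n\lambda^2)^{-1/p})$ one checks $n\lambda^2 t^p=o(n^{2\varepsilon})$ exactly under $\lambda=\smallO(n^{(p-2)/4+\eta})$, which is how both the $\lambda$–threshold and the step-size constraint arise. Your sketch conflates this annealed device (which you correctly name in the last paragraph) with a pathwise sup bound in the displayed estimate, and that is why the constants cannot be balanced.
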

\noindent
Note that the step size condition of Proposition~\ref{prop:bottleneck} is always meaningful, since our hypothesis on $\lambda$ implies 
$[n \lambda^2]^{-1/p} = \omega(n^{-1/2})$, and many MCMC algorithms (e.g.\ any procedure in which a number $\mcO(1)$ of coordinates of the current iterate
are changed in a single iteration) will have a step size $\mcO(n^{-1/2})$. 

\begin{remark}
    \normalfont
    The results of \cite{arous2020algorithmic} are stated when considering for the invariant distribution of the MCMC a more general ``Gibbs-type'' distribution 
    $\bbG_{\beta,Y}(\rd x) \propto e^{\beta H(x)} \rd \Pi(x)$, with $H(x) = (\sqrt{n}/2) \langle x^{\otimes p},Y \rangle$.
    The case we consider here is the ``Bayes-optimal'' $\beta = \lambda$, for which $\bbG_{\lambda,Y} = \Pi(\cdot|Y)$.
    For the general distribution $\bbG_{\beta,Y}$
    the conditions of Proposition~\ref{prop:bottleneck} become $\beta \lambda = \smallO(n^{(p-2)/2 + \eta})$ and $\delta = \mathcal{O}[(n\beta\lambda)^{-1/p}]$. 
    The authors of \cite{arous2020algorithmic} usually consider $\beta = \mathcal{O}(1)$, 
    so that they show the bottleneck under the condition $\lambda = \smallO(n^{(p-2)/2 + \eta})$.
\end{remark}

\noindent
More generally, $\lambda \ll n^{(p-2)/4}$ is conjectured to be a regime in which \emph{all} polynomial-time algorithms fail to recover $\theta_0$ \cite{richard2014statistical,wein2019kikuchi,hopkins2015tensor,hopkins2016fast,kim2017community}.
On the other hand, ``local'' methods (such as gradient-based algorithms \cite{sarao2019afraid,sarao2019passed,biroli2020iron,ben2020bounding,arous2021online}, message-passing iterations \cite{lesieur2017statistical}, or natural MCMC algorithms such as the ones of previous remark) are conjectured or known to fail in the larger range $\lambda \ll n^{(p-2)/2}$.
Proposition~\ref{prop:bottleneck} shows that ``Bayes-optimal'' MCMC algorithms fail for $\lambda \ll n^{(p-2)/4}$.
To the best of our knowledge, analysing this class of algorithms in the regime $n^{(p-2)/4} \ll \lambda \ll n^{(p-2)/2}$ is still open.

\myskip 
Let us now state formally the key ingredient behind Proposition~\ref{prop:bottleneck}.
It is a rewriting of the ``free energy wells'' result of \cite{arous2020algorithmic}.
\begin{lemma}[Bottleneck, formal]\label{lemma:bottleneck_formal}
    Assume that $\lambda = \smallO(n^{(p-2)/4 + \eta})$ for all $\eta > 0$, and
    let $\delta = \mathcal{O}([n\lambda^2]^{-1/p})$. 
    Let $r(\varepsilon) = n^{-1/2+\varepsilon}$.
    Then for any $\varepsilon > 0$ small enough, there exists $c,C > 0$ 
    such that for large enough $n$, with probability at least $1 - \exp(- c n^{2\varepsilon})$ we have:
    \begin{align}\label{eq:posterior_ratio_spiked_tensor}
        \frac{\Pi(\shire_{r(\varepsilon)}|Y)}{\Pi(\mordor_{r(\varepsilon), r(\varepsilon)+\delta}|Y)} &\geq \exp\{C n^{2\varepsilon}\}.
    \end{align}
\end{lemma}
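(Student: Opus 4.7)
The strategy combines simple prior volume estimates with the Markov's inequality trick of~\cite{arous2020algorithmic}. Writing $\mcZ_A := \int_A e^{\ell_{n,Y}(\theta)}\,\rd\Pi(\theta)$, the ratio $\Pi(\shire_{r(\varepsilon)}|Y)/\Pi(\mordor_{r(\varepsilon),r(\varepsilon)+\delta}|Y)$ equals $\mcZ_{\shire_{r(\varepsilon)}}/\mcZ_{\mordor_{r(\varepsilon),r(\varepsilon)+\delta}}$, so the posterior normalisation cancels. I would first compute the prior volumes from the density $\propto (1-s^2)^{(n-3)/2}$ of $\langle\theta,\theta_0\rangle$ under $\Pi$, obtaining $\Pi(\shire_{r(\varepsilon)}) \geq 1/2$ for large $n$ and $\Pi(\mordor_{r(\varepsilon),r(\varepsilon)+\delta}) \leq \mathrm{poly}(n)\,e^{-n^{2\varepsilon}/2}$, which supplies the entropic half of the barrier.

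Substituting $Y = \sqrt{n}\lambda\theta_0^{\otimes p}+Z$ and using the Gaussian MGF of $\langle\theta^{\otimes p},Z\rangle\sim\mcN(0,1)$ on $\bbS^{n-1}$ produces the annealed expectation
\begin{equation*}
\bbE_Z\mcZ_A \;=\; e^{n\lambda^2/8}\int_A e^{(n\lambda^2/2)\langle\theta,\theta_0\rangle^p}\,\rd\Pi(\theta).
\end{equation*}
On $\mordor$ the integrand factor is at most $e^{(n\lambda^2/2)(r(\varepsilon)+\delta)^p}$; combining $\delta = \mcO((n\lambda^2)^{-1/p})$ with the $\lambda$-hypothesis and choosing $\varepsilon > 0$ small enough depending on $p$ makes this $e^{\smallO(n^{2\varepsilon})}$. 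Markov's inequality with threshold $e^{cn^{2\varepsilon}}$ then gives, with probability $\geq 1-e^{-cn^{2\varepsilon}}$,
\begin{equation*}
\mcZ_\mordor \;\leq\; e^{c n^{2\varepsilon} + n\lambda^2/8 + \smallO(n^{2\varepsilon})}\,\Pi(\mordor).
\end{equation*}
For the numerator, a matching lower bound $\mcZ_\shire \geq e^{n\lambda^2/8 - \smallO(n^{2\varepsilon})}$ with comparable probability follows from Gaussian concentration of the $(\sqrt{n}\lambda/2)$-Lipschitz map $Z\mapsto\log\mcZ_\shire$ together with a Paley--Zygmund / second-moment comparison that exploits the near-orthogonality of independent $\Pi|_\shire$-samples: typically $\langle\theta,\theta'\rangle = \mcO(n^{-1/2})$, so that $(n\lambda^2/4)\langle\theta,\theta'\rangle^p = \mcO(1)$ under the $\lambda$-hypothesis and hence $\bbE[\mcZ_\shire^2]/(\bbE\mcZ_\shire)^2 = \mcO(1)$.

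Combining these bounds with the prior volume estimates yields $\mcZ_\shire/\mcZ_\mordor \geq e^{n^{2\varepsilon}(1/2-c-\smallO(1))}/\mathrm{poly}(n) \geq e^{Cn^{2\varepsilon}}$ for any $0<C<1/2-c$ and $n$ large. The main technical obstacle is the numerator lower bound: the worst-case Lipschitz constant $\sqrt{n}\lambda/2$ alone would give deviations of order $n^{\varepsilon}\sqrt{n}\lambda$, which dominates $n^{2\varepsilon}$, so one must exploit that the effective $Z$-gradient of $\log\mcZ_\shire$ (a posterior average of $\theta^{\otimes p}$) has norm closer to $n^{-p/4}$ than to $1$ on the relevant $Z$-region, a bound coming from the second-moment computation rather than from a naive Lipschitz estimate.
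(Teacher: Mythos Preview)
The paper does not actually supply a proof of this lemma; it is stated as ``a rewriting of the `free energy wells' result of~\cite{arous2020algorithmic}'', and the only technical description given is the sentence ``simple volume arguments, associated with an ingenious use of Markov's inequality due to~\cite{arous2020algorithmic} and of the rotation-invariance of the noise tensor $Z$''. So there is no line-by-line comparison to make; I can only evaluate your outline against that one-sentence description and against the arithmetic the statement forces.

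Your overall architecture (cancel the normaliser, estimate $\Pi(\shire_{r(\varepsilon)})$ and $\Pi(\mordor_{r(\varepsilon),r(\varepsilon)+\delta})$ from the density $(1-s^2)^{(n-3)/2}$, upper-bound $\mcZ_\mordor$ via a first moment and Markov) matches what the paper indicates. There are, however, two genuine gaps.

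\smallskip

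\textbf{(i) The signal bound on $\mordor$.} You claim that $(n\lambda^2/2)(r(\varepsilon)+\delta)^p = \smallO(n^{2\varepsilon})$ for $\varepsilon$ small. This is false at the top of the allowed range for $\lambda$. Take $\lambda = n^{(p-2)/4}$ (which satisfies the hypothesis $\lambda=\smallO(n^{(p-2)/4+\eta})$ for every $\eta>0$). Then $n\lambda^2=n^{p/2}$, so $\delta=\mcO(n^{-1/2})$ is dominated by $r(\varepsilon)=n^{-1/2+\varepsilon}$, and
\[
\tfrac{n\lambda^2}{2}\,r(\varepsilon)^p \;=\; \tfrac12\,n^{p/2}\cdot n^{p(-1/2+\varepsilon)} \;=\; \tfrac12\,n^{p\varepsilon}.
\]
Since $p\geq 3$, this is $\gg n^{2\varepsilon}$ for \emph{every} $\varepsilon>0$, so the signal term on $\mordor$ already swamps the $e^{-n^{2\varepsilon}/2}$ entropic suppression in your bookkeeping. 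Your balance ``entropy $-$ energy $= (1/2-c-\smallO(1))n^{2\varepsilon}$'' therefore fails near the threshold; you are implicitly assuming $\lambda$ is a fixed power below $n^{(p-2)/4}$.

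\smallskip

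\textbf{(ii) The lower bound on $\mcZ_\shire$.} Your Paley--Zygmund route needs $\bbE[\mcZ_\shire^2]/(\bbE\mcZ_\shire)^2=\mcO(1)$, which reduces to $\bbE_{\theta,\theta'}[\exp((n\lambda^2/4)\langle\theta,\theta'\rangle^p)]=\mcO(1)$ for $\theta,\theta'$ i.i.d.\ in $\shire$. Restricting to $\shire$ does not restrict $\langle\theta,\theta'\rangle$, and the rare pairs with $|\langle\theta,\theta'\rangle|\approx 1$ contribute of order $e^{n\lambda^2/4}\cdot e^{-\Theta(n)}$, which already diverges once $\lambda\to\infty$ (and is $e^{\Theta(n^{p/2})}$ at the threshold). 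So the raw second moment is not $\mcO(1)$; you would need a truncation argument that you have not supplied. Your fallback (``effective $Z$-gradient of $\log\mcZ_\shire$ has norm closer to $n^{-p/4}$'') is the right intuition but is not a proof: it presupposes control of the Gibbs measure that is exactly what is at stake.

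\smallskip

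The paper's one-sentence description singles out \emph{rotation invariance of $Z$} as the missing ingredient, and this is the device that bypasses concern~(ii). For each fixed $Z$, set $\mu_Z(\rd\theta)\propto e^{H(\theta;Z)}\rd\Pi(\theta)$ (the pure-noise Gibbs measure), which is independent of $\theta_0$. For any latitude band $A(\theta_0)$ one has the \emph{exact} identity
\[
\bbE_{\theta_0}\big[\mu_Z(A(\theta_0))\big]
=\int \Pr_{\theta_0}\big(\theta\in A(\theta_0)\big)\,\mu_Z(\rd\theta)=\Pi(A),
\]
valid for every $Z$, since $\Pr_{\theta_0}(\theta\in A(\theta_0))=\Pi(A)$ by uniformity. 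Markov over $\theta_0$ then gives $\mu_Z(\mordor)\le e^{cn^{2\varepsilon}}\Pi(\mordor)$ and $\mu_Z(\shire)\ge 1-e^{cn^{2\varepsilon}}\Pi(\shire^c)\ge 1/2$ with probability $\ge 1-e^{-cn^{2\varepsilon}}$, \emph{without any concentration or second-moment input}. This is the ``ingenious use of Markov's inequality'' the paper refers to, and it replaces your Paley--Zygmund/Lipschitz discussion entirely. The signal issue in (i), however, is orthogonal to this and still has to be addressed separately.
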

\noindent
Note that by simple volume arguments, $\Pi(\shire_{r(\varepsilon)}) = 1 - \smallO_n(1)$, so that $\shire_{r(\varepsilon)}$ contains ``almost all'' the mass of the uniform distribution.

One can then deduce from Lemma~\ref{lemma:bottleneck_formal} hitting time lower bounds for MCMCs using a folklore bottleneck argument -- see Jerrum \cite{J03} -- that we recall here in a simplified form (see also \cite{arous2020free}, as well as Proposition~\ref{hit}, where we will detail it further along with a short proof).
\begin{proposition}\label{prop:jerrum_simplified}
   We fix any $Y$ and $n$, and let any $0 < s < t < 1$.
   Let $\theta^{(0)}, \theta^{(1)}, \cdots$ be a Markov chain on $\bbS^{n-1}$ with stationary distribution $\Pi(\cdot | Y)$, and initialised from $\theta^{(0)} \sim \Pi_{\shire_{s}}(\cdot | Y)$, the posterior distribution conditioned on $\shire_{s}$. Let $\tau_{t} = \inf\{k \in \bbN \, : \, \theta^{(k)} \in \doom_t\}$ be the hitting time of the Markov chain onto $\doom_{t}$. Then, for any $k \geq 1$, 
   \begin{align}
       \mathrm{Pr}(\tau_t \leq k) &\leq k \frac{\Pi(\mordor_{s, t}|Y)}{\Pi(\shire_{s}|Y)}.
   \end{align}
\end{proposition}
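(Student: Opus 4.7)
The plan is to combine a pointwise bound on the density of the chain's time-$\ell$ marginal (against the stationary distribution) with a pathwise argument forcing the chain to traverse the bottleneck set $\mordor_{s,t}$ before entering $\doom_t$.

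First, let $\pi_\ell$ denote the law of $\theta^{(\ell)}$ and set $f_\ell = \rd \pi_\ell / \rd \Pi(\cdot|Y)$. By construction of the initial distribution, $f_0 = \mathbf{1}_{\shire_s}/\Pi(\shire_s|Y)$, so $\|f_0\|_\infty = 1/\Pi(\shire_s|Y)$. A direct computation, using only that $\Pi(\cdot|Y)$ is stationary for the Markov kernel $P$, yields $f_{\ell+1} = P^* f_\ell$, where $P^*$ denotes the $L^2(\Pi(\cdot|Y))$-adjoint of $P$; since $P^*$ is positivity-preserving and satisfies $P^* 1 = 1$, it is itself a Markov kernel and hence a contraction on $L^\infty$. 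By induction, $\|f_\ell\|_\infty \leq 1/\Pi(\shire_s|Y)$ for every $\ell \geq 0$, and integrating over any measurable set $A$ gives the one-step marginal bound
\begin{equation}
\Pr(\theta^{(\ell)} \in A) = \int_A f_\ell(x) \, \rd\Pi(x|Y) \leq \frac{\Pi(A|Y)}{\Pi(\shire_s|Y)}.
\end{equation}

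Second, the sets $\shire_s$, $\mordor_{s,t}$, $\doom_t$ form a disjoint partition of $\bbS^{n-1}$, and under the (implicit) step-size restriction used in Proposition~\ref{prop:bottleneck} the chain cannot jump from $\shire_s$ directly to $\doom_t$ in a single transition. Consequently, on the event $\{\tau_t \leq k\}$ the chain must occupy $\mordor_{s,t}$ at some time $\ell \in \{1,\ldots,k\}$ (namely, at its first exit from $\shire_s$). A union bound over $\ell$ combined with the marginal estimate above then yields
\begin{equation}
\Pr(\tau_t \leq k) \leq \sum_{\ell=1}^{k} \Pr(\theta^{(\ell)} \in \mordor_{s,t}) \leq k \, \frac{\Pi(\mordor_{s,t}|Y)}{\Pi(\shire_s|Y)},
\end{equation}
which is the claimed bound.

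The only delicate point is the $L^\infty$-contraction $\|P^* f_\ell\|_\infty \leq \|f_\ell\|_\infty$; this is however immediate from the fact that $P^*$ inherits a Markov-kernel structure from $P$ via stationarity of $\Pi(\cdot|Y)$, and in particular does not require reversibility. The bound therefore applies to any MCMC scheme with the correct stationary distribution, including the pCN and MALA-type kernels considered later in the paper, and mirrors the classical conductance/flow arguments of Jerrum~\cite[Ch.~7]{J03}.
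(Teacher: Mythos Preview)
Your argument is correct, and the overall structure---obtain the marginal bound $\Pr(\theta^{(\ell)}\in A)\le \Pi(A|Y)/\Pi(\shire_s|Y)$, then union-bound over times at which the chain visits the bottleneck $\mordor_{s,t}$---matches the paper's. The one genuine difference is how you derive that marginal bound. You pass through the $L^\infty$-contraction of the adjoint kernel $P^*$ acting on densities; the paper instead observes that initialising at $\Pi_{\shire_s}(\cdot|Y)$ is the same as running the \emph{stationary} chain and conditioning on $\{\theta^{(0)}\in\shire_s\}$, then simply drops that event from the numerator and uses $\Pr(\theta^{(\ell)}\in A)=\Pi(A|Y)$ by stationarity to get
\[
\Pr\big(\theta^{(\ell)}\in A \,\big|\, \theta^{(0)}\in\shire_s\big)=\frac{\Pr(\theta^{(0)}\in\shire_s,\ \theta^{(\ell)}\in A)}{\Pi(\shire_s|Y)}\le \frac{\Pi(A|Y)}{\Pi(\shire_s|Y)}
\]
in one line. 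Your density argument yields exactly this inequality and has the mild advantage of applying to any initial law with bounded Radon--Nikodym derivative against the stationary measure, at the cost of introducing the time-reversed kernel. Your treatment of the step-size restriction forcing passage through $\mordor_{s,t}$ is also correct and mirrors the reduction the paper carries out separately (from hitting the target to hitting the intermediate barrier) in its later section on hitting times.
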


\begin{remark}[MCMC initialisation] \normalfont
    Note that Lemma~\ref{lemma:bottleneck_formal}, combined with Proposition~\ref{prop:jerrum_simplified}, shows hardness of MCMC initialised in points drawn from 
    $\Pi_{\shire_{r(\epsilon)}}(\cdot | Y)$.  In particular, it is easy to see that this implies (via the probabilistic method) the existence of such ``hard'' initialising points.
    While one might hope to show such negative results for more general initialisation, this remains an open problem. 
    On the other hand, \cite{arous2020algorithmic} shows that there exists initialisers in $\shire_{r(\epsilon)}$ for which vanilla Langevin dynamics achieve non-trivial recovery of the signal even for $\lambda = \Theta_n(1)$ (a phenomenon they call ``equatorial passes'').
\end{remark}

\section{Main results for non-linear regression with Gaussian priors}\label{sec:results_gaussian_priors}

We now turn to the main contribution of this article, which is to exhibit some of the phenomena described in Section~\ref{sec:spiked_tensor} in the context of non-linear regression models. All the theorems of this section are proven in detail in Section~\ref{proofs}.

\medskip
Consider data $Z^{(N)}=^{iid}(Y_i, X_i)_{i=1}^N$ from the random design regression model 
\begin{equation}\label{model}
Y_i = \mathscr G(\theta)(X_i)+ \varepsilon_i,~\varepsilon_i \sim \mcN(0,1),~ i=1, \dots, N,
\end{equation}
where $\mathscr G:\Theta \to L^2_\mu(\mathcal X)$ is a regression map taking values in the space $L^2(\mathcal X)=L^2_\mu(\mathcal X)$ on some bounded subset $\mathcal X$ of $\R^d$, and where the $X_i \sim^{iid} \mu$ are drawn uniformly on $\mathcal X$. For convenience, we assume that $\mathcal X$ has Lebesgue measure $\int_\mathcal X dx =1$. The law of the data $dP_\theta^N(z_1, \dots, z_N) = \prod_{i=1}^N dP_\theta(z_i)$ is a product measure on $(\mathbb R \times \mathcal X)^N$, with associated expectation operator $\mathrm E_\theta^N$. Here $\theta$ varies in some parameter space $$\Theta \subseteq \R^D,~~ \frac{D}{N} \simeq \kappa \ge 0,$$ and $\theta_0 \in \Theta$ is a `ground truth' (we could use `mis-specified' $\theta_0$ and project it onto $\Theta$). We will primarily consider the case where $\kappa>0$ and $\Theta = \R^D$, and consider high-dimensional asymptotics where $D$ (and then also $N$) diverge to infinity, even though some aspects of our proofs do not rely on these assumptions. 
We will say that events $A_N$ hold with high probability if $P^N_{\theta_0}(A_N) \to 1$ as $N \to \infty$, and we will use the same terminology later when it involves the law of some Markov chain.

\smallskip

Let $\Pi$ be a prior (Borel probability measure) on $\Theta$ so that given the data $Z^{(N)}$ the posterior measure is the `Gibbs'-type distribution
\begin{equation}\label{post}
d\Pi(\theta|Z^{(N)}) =\frac{e^{\ell_N(\theta)}d\Pi(\theta)}{\int_{\Theta} e^{\ell_N(\theta)}d\Pi(\theta)},~~~ \theta \in \Theta,
\end{equation}
where $$\ell_N(\theta) = -\frac{1}{2} \sum_{i=1}^N|Y_i - \G(\theta)(X_i)|^2, ~~\ell(\theta)=\mathrm E_{\theta_0}^N\ell_N(\theta),~~\theta \in \Theta.$$

\subsection{Hardness examples for posterior computation with Gaussian priors}

We are concerned here with the question of whether one can sample from the Gibbs' measure (\ref{post}) by MCMC algorithms. The priors will be Gaussian, so the `source' of the difficulty will arise from the log-likelihood function $\ell_N$. On the one hand, recent work (\cite{D17, DM19, NW20, BN21, N22}) has demonstrated that if $\ell_N(\theta)$ is `on average' (under $E_{\theta_0}$) log-concave, possibly only just locally near the ground truth $\theta_0$, then MCMC methods that are initialised into the area of log-concavity can mix towards $\Pi(\cdot|Z^{(N)})$ in polynomial time even in high-dimensional ($D\to \infty$) and `informative' ($N \to \infty$) settings. In absence of such structural assumptions, however, posterior computation may be intractable, and the purpose of this section is to give some concrete examples for this with choices of $\mathscr G$ that are representative for non-linear regression models.

\smallskip

We will provide lower bounds on the run-time of `worst case' initialised MCMC in settings where the average posterior surface is not \textit{globally} log-concave but still unimodal. Both the log-likelihood function and posterior density exhibit linear growth towards their modes, and the average log-likelihood is locally log-concave at $\theta_0$. In particular the Fisher information is well defined and non-singular at the ground truth. 

The computational hardness does not arise from a local optimum (`multimodality'), but from the difficulty MCMC encounters in `choosing' among many high-dimensional directions when started away from the bulk of the support of the posterior measure. That such problems occur in high dimensions is related to the probabilistic structure of the prior $\Pi$, and the manifestation of `free energy barriers' in the posterior distribution. 

In many applications of Bayesian statistics, such as in machine learning or in non-linear inverse problems with PDEs, \textit{Gaussian process priors} are commonly used for inference. To connect to such situations we illustrate the key ideas that follow with two canonical examples where the prior on $\R^D$ is the law
\begin{equation} \label{twopriors}
a)~ \theta \sim \mcN(0, \Id_D/D),~~\text{ or } b)~\theta \sim \mcN(0, \Sigma_\alpha),
\end{equation}
where $\Sigma_\alpha$ is the covariance matrix arising from the law of a $d$-dimensional Whittle-Mat\'ern-type Gaussian random field (see Section~\ref{subsubsec:def_matern_prior} for a detailed definition). 
These priors represent widely popular choices in Bayesian statistical inference \cite{RW06, GV17} and can be expected to yield consistent statistical solutions of regression problems even when $D/N \ge \kappa>0$, see \cite{vdVvZ08, GV17}. In b) we can also accommodate a further `rescaling' ($N$-dependent shrinkage) of the prior similar to what has been used in recent theory for non-linear inverse problems (\cite{MNP21}, \cite{NW20}, \cite{BN21}), see Remark \ref{rescale} for details. 

\medskip

We will present our main results for the case where the ground truth is $\theta_0=0$. This streamlines notation while also being the `hardest' case for negative results, since the priors from a) and b) are then already centred at the correct parameter.

\medskip

To formalise our results, let us define balls
\begin{equation}\label{Bs}
B_r=\{\theta \in \R^D: \|\theta\|_{\R^D} \le r\},~r>0,
\end{equation}
centred at $\theta_0=0$. We will also require the annuli
\begin{equation}
\Theta_{r, \varepsilon}= \{\theta \in \R^D: \|\theta\|_{\R^D} \in (r, r+\varepsilon) \},
\end{equation}
for $r, \varepsilon>0$ to be chosen. To connect this to the notation in the preceding sections, the sets $\Theta_{r,\eps}$ will play the role of the initialisation (or starting) region $\mathcal S$, while $B_s$ (for suitable $s$) corresponds to the target region $\mathcal T$ where the posterior mass concentrates. The `intermediate' region $\mathcal W=\Theta_{s,\eta}$ representing the `free-energy barrier' is constructed in the proofs of the theorems to follow.

\medskip

Our results hold for general Markov chains whose invariant measure equals the posterior measure~(\ref{post}), and which admit a bound on their `typical' step-sizes. As step-sizes can be random, this assumption needs to be accommodated in the probabilistic framework describing the transition probabilities of the chain. Let $\mathcal P_N(\theta,A), N \in \mathbb N,$ (for $\theta\in \R^D$ and Borel sets $A\subseteq \R^D$), denote a sequence of Markov kernels describing the Markov chain dynamics employed for the computation of the posterior distribution $\Pi(\cdot|Z^{(N)})$. Recall that a probability measure $\mu$ on $\R^D$ is called invariant for $\mathcal P_N$ if $\int_{\R^D} \mathcal P_N(\theta, A) d\mu(\theta) = \mu(A)$ for all Borel sets $A$.

\begin{assumption}\label{ass:step-eta}
Let $\mathcal P_N(\cdot,\cdot)$ be a sequence of Markov kernels satisfying the following:

\textbf{i)} $\mathcal P_N(\cdot,\cdot)$ has invariant distribution $\Pi(\cdot|Z^{(N)})$ from (\ref{post}). 

\textbf{ii)} For some fixed $c_0>0$ and for sequences $L=L_N>0$, $\eta=\eta_N>0$, with $P_0^N$-probability approaching $1$ as $N\to \infty$, 
\[ \sup_{\theta\in B_L} \mathcal P_N(\theta,\{\vartheta: \| \theta-\vartheta \|_{\R^D}\ge \eta/2 \})\le e^{-c_0N},~~~N\ge 1.  \]
\end{assumption}

This assumption states that typical steps of the Markov chain are, with high probability (both under the law of the Markov chain and the randomness of the invariant `target' measure), concentrated in an area of size $\eta/2$ around the current state $\theta$, uniformly in a ball of radius $L$ around $\theta_0=0$. 
%\textcolor{blue}{Note that while we require a quantitative, exponentially small bound under the law of the Markov kernel $\mathcal P_N$, the frequentist $P_0^N$-probability for this assumption to hold is merely required to tend to 1.}
For standard MCMC algorithms (such as pCN, MALA) whose proposal steps are based on the discretisation of some continuous-time diffusion process, such conditions can be checked, as we will show in the next subsection.

\begin{theorem}\label{a}
Let $D/N \simeq \kappa>0$, consider the posterior (\ref{post}) arising from the model (\ref{model}) and a $\mcN(0, \Id_D/D)$ prior of density $\pi$, and let $\theta_0=0$. Then there exists $\G$ and a fixed constant $s\in (0,1/3)$ for which the following statements hold true.

\textbf{i)} The expected likelihood $\ell(\theta)$ is unimodal with mode $0$, locally log-concave near $0$, radially symmetric, Lipschitz-continuous and monotonically decreasing in $\|\theta\|_{\R^D}$ on $\R^D$. 

\textbf{ii)} For any fixed $r>0$, with high probability the log-likelihood $\ell_N(\theta)$ and the posterior density $\pi(\cdot|Z^{(N)})$ are monotonically decreasing in $\|\theta\|_{\R^D}$ on the set $\{\theta: \|\theta\|_{\R^D}\ge r\}$.

\textbf{iii)} We have that $ \Pi(B_s|Z^{(N)}) \xrightarrow{N\to\infty} 1$ in probability.

\textbf{iv)} There exists $\varepsilon>0$ such that for any (sequence of) Markov kernels $\mathcal P_N$ on $\R^D$ and associated chains $(\vartheta_k:k\ge 1)$ that satisfy Assumption \ref{ass:step-eta} for some $c_0>0$, $L=1+\eps$, sequence $\eta_N\in (0,s)$ and all $N\ge 1$ large enough, we can find an initialisation point $\vartheta_0 \in \Theta_{2/3, \varepsilon}$ such that with high probability (under the law of $Z^{(N)}$ and the Markov chain), the hitting time $\tau_{B_s}$ for $\vartheta_k$ to reach $B_s$ (with $s$ as in \textbf{iii)}) is lower bounded as
	\[\tau_{B_s} \ge \exp\big(\min \{c_0,1\}N/2\big).\]

\end{theorem}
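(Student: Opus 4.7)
The plan is to exhibit a regression map with a \emph{radially saturating} log-likelihood, producing a posterior density on $\R^D$ that is unimodal on $\R^D$ yet whose one-dimensional radial profile is not. Concretely I take $\G(\theta)(x) := \beta\,\Phi(\|\theta\|_{\R^D})$, constant in $x\in\mathcal X$ and hence a valid element of $L^2_\mu(\mathcal X)$, with $\beta>0$ a large constant depending on $\kappa=D/N$ and $\Phi:[0,\infty)\to[0,M]$ smooth, strictly increasing, $\Phi(0)=0$, chosen so that $h(u):=\Phi(\sqrt u)^2$ is smooth with $h(0)=0$, $h'(0)>0$ and saturates sharply to $M^2$ (for instance $h(u) = M^2(1-e^{-u/M^2})$, with $M$ taken small enough and in $(s,1/3)$). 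A direct computation yields
\[
\ell(\theta) = -\tfrac N2\big[1+\beta^2 h(\|\theta\|_{\R^D}^2)\big],\qquad \ell_N(\theta)=\ell(\theta)+\beta\,\Phi(\|\theta\|_{\R^D})\,S,
\]
where $S=\sum_i\varepsilon_i\sim \mcN(0,N)$ is the only stochastic term beyond a $\theta$-independent constant. Parts \textbf{(i)} and \textbf{(ii)} then reduce to short verifications: smoothness of $h(\|\theta\|^2)$ gives smoothness of $\ell$ at $0$ with Hessian $-N\beta^2 h'(0)\Id_D$, monotonicity in $\|\theta\|$ is inherited from $h$, Lipschitz continuity from the boundedness of $h'$, while $|S|\lesssim\sqrt N$ w.h.p.\ is negligible against the deterministic monotone contribution $-N\beta^2\Phi^2/2$ once $\|\theta\|\ge r$ for any fixed $r>0$; adding the radially decreasing prior log-density then gives monotonicity of $\log\pi(\cdot|Z^{(N)})$ on $\{\|\theta\|\ge r\}$.

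For \textbf{(iii)} and \textbf{(iv)}, joint radial symmetry reduces the posterior mass of any radially invariant Borel set to a one-dimensional integral,
\[
\Pi(A\,|\,Z^{(N)}) \;\propto\; \int_{\{r:\,r\omega\in A\}} r^{D-1}\exp\{\ell_N(r) - D r^2/2\}\,dr.
\]
Setting $D/N=\kappa$ and introducing the radial potential $U(r) := \log r - \beta^2 h(r^2)/(2\kappa) - r^2/2$, a Laplace-type argument combined with subgaussian concentration of $S/N$ gives $\Pi(\{r\in[a,b]\}|Z^{(N)}) = \exp\{D\sup_{[a,b]} U + o_P(N)\}$ uniformly on bounded intervals. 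The critical equation $U'(r)=0$ reads $r^2[\beta^2 h'(r^2)/\kappa+1]=1$ and has its global maximiser at $r_1\approx \sqrt{\kappa/(\kappa+\beta^2 h'(0))}$, which is pushed below $s$ by taking $\beta$ large. Past the saturation scale $h'(r^2)\approx 0$, so $U$ reverts to $\log r - r^2/2$ plus the large negative constant $-\beta^2 M^2/(2\kappa)$; this secondary regime is strictly increasing on $(M,1)$ with local maximum at $r=1$, forcing a strict local minimum of $U$ at some $s^\ast\in(s,2/3)$. A direct computation in the limit where $h$ tends to $u\wedge M^2$ gives the explicit gap
\[
U(2/3)- U(s^\ast) \;\geq\; \log\!\frac{2}{3M} - \frac{2}{9} + \frac{M^2}{2},
\]
which can be made arbitrarily large by shrinking $M$ (and then $s$), and a perturbation argument shows the same gap persists for smoothed $h$ close enough to $u\wedge M^2$. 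Since $U(r_1)>\sup_{r\geq s}U(r)$ by a strictly positive margin, \textbf{(iii)} follows from $\Pi(B_s|Z^{(N)})/\Pi(B_s^c|Z^{(N)})\to\infty$.

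For \textbf{(iv)}, set $\mathcal W := \Theta_{s^\ast-\eta_N/2,\,\eta_N}$. Any chain with radial step size $\leq\eta_N/2$ travelling from $\Theta_{2/3,\varepsilon}$ to $B_s$ must visit $\mathcal W$; under Assumption~\ref{ass:step-eta} the event $\mathcal E_k$ that no step among the first $k$ exceeds $\eta_N/2$ in norm satisfies $\Pr(\mathcal E_k^c)\leq k e^{-c_0 N}$, and on $\mathcal E_k$ the folklore bottleneck argument (Proposition~\ref{prop:jerrum_simplified}, in its version for general Markov chains used later in the paper) gives
\[
\Pr\big(\tau_{B_s}\leq k\,\big|\,\mathcal E_k\big)\;\leq\;k\cdot \frac{\Pi(\mathcal W|Z^{(N)})}{\Pi(\Theta_{2/3,\varepsilon}|Z^{(N)})}\;\leq\;k\,e^{-cN}
\]
with high probability, where $c=\kappa\,[U(2/3)-U(s^\ast)]$. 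Tuning $M$ small enough that $c\ge 1$, and choosing $k=\lfloor\exp(\min\{c_0,1\}N/2)\rfloor$, both $\Pr(\mathcal E_k^c)$ and $\Pr(\tau_{B_s}\leq k\mid\mathcal E_k)$ tend to zero. The deterministic initialiser $\vartheta_0\in\Theta_{2/3,\varepsilon}$ is produced by the probabilistic method applied to the above inequality averaged over $\vartheta_0\sim\Pi_{\Theta_{2/3,\varepsilon}}(\cdot|Z^{(N)})$.

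The main obstacle is upgrading the one-dimensional Laplace approximation to the sharp form (remainder $o_P(N)$ in the exponent, not merely $o(1)$), uniformly over the narrow radial window $\mathcal W$ of width $\eta_N$, while simultaneously controlling $\ell_N-\ell$ uniformly for the strict monotonicity claim in \textbf{(ii)}. Because the noise enters $\ell_N$ only through the single scalar $S$, both reduce to a subgaussian tail bound on $S/N$; the delicate piece is calibrating $M,\beta,s$ as functions of $\kappa$ so that the bottleneck exponent $c$ dominates $\min\{c_0,1\}/2$ uniformly, which is where the explicit gap bound above is used.
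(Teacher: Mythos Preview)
Your construction is correct and yields a valid proof, but it proceeds differently from the paper's. The paper takes $\G(\theta)(x)=\sqrt{w(\|\theta\|)}\,g(x)$ with a genuinely $x$-dependent $g$ and a \emph{piecewise} profile $w$ (quadratic, then steep linear, then shallow linear, then constant); it never writes down the radial potential $U$ but instead proves a general posterior-ratio lemma (Proposition~\ref{nubd}) comparing $\Pi(\Theta_{s,\eta}|Z^{(N)})$ to $\Pi(\Theta_{\sigma,\eps}|Z^{(N)})$ via the balance inequality~(\ref{bal2}), and then feeds in the small-ball estimate~(\ref{pm}) for $\mcN(0,I_D/D)$. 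Your route collapses everything to the one-dimensional function $U(r)=\log r-r^2/2-\beta^2h(r^2)/(2\kappa)$ and reads off both contraction and the bottleneck directly from its shape, placing the barrier $\mathcal W$ at the local minimum $s^\ast$ rather than at $s$. Your choice of $\G$ constant in $x$ is a strict simplification (it reduces the two empirical-process terms $\sum g^2(X_i)$ and $\sum\eps_ig(X_i)$ to the single Gaussian scalar $S$), and your smooth saturating $h$ avoids the piecewise bookkeeping. The paper's more modular decomposition pays off because Proposition~\ref{nubd} and the hitting-time reduction in Section~\ref{reduction} are reused verbatim for Theorem~\ref{b} with the anisotropic Mat\'ern prior; your direct Laplace argument would need to be redone there.

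One genuine gap: your bound $\Pr(\mathcal E_k^c)\le ke^{-c_0N}$ uses Assumption~\ref{ass:step-eta} \emph{only on $B_L$}, so it is valid only on the event that the chain has not yet exited $B_{1+\eps}$. The paper handles this explicitly (Section~\ref{reduction} and part~\textbf{ii)} of Proposition~\ref{nubd}) by also bounding $\Pi(B_L^c|Z^{(N)})/\Pi(\Theta_{2/3,\eps}|Z^{(N)})\le e^{-\nu N}$ and applying the bottleneck argument a second time to $\tau_{B_L^c}$. In your framework this is easy—$U$ is strictly decreasing on $(1,\infty)$, so $\sup_{r>1+\eps}U(r)<U(1)\le\sup_{[2/3,2/3+\eps]}U$ once $\eps\ge 1/3$—but it does need to be said. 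A smaller point: your displayed gap $U(2/3)-U(s^\ast)$ is computed for the hard-threshold limit $h(u)=u\wedge M^2$; for your smooth $h(u)=M^2(1-e^{-u/M^2})$ the local minimum $s^\ast$ is no longer exactly at $M$, so the ``perturbation argument'' you allude to should be made quantitative (or simply replace the explicit formula by the observation that $U(2/3)-\inf_{[s,2/3]}U\to\infty$ as $M\to 0$, which is all you need).
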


The interpretation is that despite the posterior being strictly increasing in the radial variable $\|\theta\|_{\R^D}$ (at least for $\|\theta\|_{\R^D}>r$, any $r>0$ -- note that maximisers of the posterior density may deviate from the `ground truth' $\theta_0=0$ by some asymptotically vanishing error, cf.~also Proposition \ref{mono}), MCMC algorithms started in $\Theta_{2/3, \varepsilon}$ will still take an exponential time before visiting the region $B_s$ where the posterior mass concentrates. This is true for small enough step-size independently of $D, N$. The result holds also for $\vartheta_0$ drawn from an absolutely continuous distribution on $\Theta_{2/3, \varepsilon}$ as inspection of the proof shows. Finally, we note that at the expense of more cumbersome notation, the above high probability results (and similarly in Theorem \ref{b}) could be made non-asymptotic,
in the sense that for all $\delta > 0$ all statements hold with probability at least $1 - \delta$ for all $N\ge N_0(\delta)$ large enough, where the dependency of $N_0$ on $\delta$ can be made explicit.

\medskip

For `ellipsoidally supported' $\alpha$-regular priors b), the idea is similar but the geometry of the problem changes as the prior now `prefers' low-dimensional subspaces of $\R^D$, forcing the posterior closer towards the ground truth $\theta_0=0$. We show that if the step size is small compared to a scaling $N^{-b}$ for $b>0$ determined by $\alpha$, then the same hardness phenomenon persists. Note that `small' is only `polynomially small' in $N$ and hence algorithmic hardness does not come from exponentially small step-sizes.

\begin{theorem}\label{b}
	Let $D/N \simeq \kappa>0$, consider the posterior (\ref{post}) arising from the model (\ref{model}) and a $\mcN(0,\Sigma_\alpha)$ prior of density $\pi$ for some $\alpha>d/2$, and let $\theta_0=0$. Define $b=(\alpha/d)-(1/2)>0$. Then there exists $\G$ and some fixed constant $s_b\in (0,1/2)$ for which the following statements hold true.
	
    \textbf{i)} The expected likelihood $\ell(\theta)$ is unimodal with mode $0$,
    locally log-concave near $0$, radially symmetric, Lipschitz continuous and monotonically decreasing in $\|\theta\|_{\R^D}$ on $\R^D$. 
	
	\textbf{ii)} For any fixed $r>0$, with high probability $\ell_N(\theta)$ is radially symmetric and decreasing in $\|\theta\|_{\R^D}$ on the set $\{\theta:\|\theta\|_{\R^D} \ge r N^{-b}\}$.

	\textbf{iii)} Defining $s=s_b N^{-b}$, we have $\Pi(B_s|Z^{(N)}) \xrightarrow{N\to\infty} 1$ in probability. 
	
	\textbf{iv)} There exist positive constants $\varepsilon,C>0$ and $\nu=\nu(\kappa, \alpha, d)>0$ such that for any (sequence of) Markov kernels $\mathcal P_N$ on $\R^D$ and associated chains $(\vartheta_k:k\ge 1)$ that satisfy Assumption \ref{ass:step-eta} for some $c_0>0$, $L=L_N=C\sqrt N$,  sequence $\eta=\eta_N\in (0, s_bN^{-b})$ and all $N\ge 1$ large enough, we can find an initialisation point $\vartheta_0 \in \Theta_{N^{-b}, \varepsilon N^{-b}}$ such that with high probability (under the law of $Z^{(N)}$ and the Markov chain), the hitting time $\tau_{B_s}$ for $\vartheta_k$ to reach $B_s$ is lower bounded as
		\[\tau_{B_s} \ge \exp\big(\min \{c_0,\nu\}N/2\big).\]
\end{theorem}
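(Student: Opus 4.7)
Following the blueprint of Theorem~\ref{a}, I would take a regression map that is radially symmetric and constant in the design variable, $\mathscr G(\theta)(x)=\psi_N(\|\theta\|_{\R^D})$, for a bounded, Lipschitz, strictly increasing, concave function $\psi_N:[0,\infty)\to[0,\infty)$ with $\psi_N(0)=0$ that is affine on a neighbourhood of $0$ (the $N$-dependence only serving to adjust the linear slope when $b\ge 1/2$). Then $\ell(\theta)=-\tfrac{N}{2}\psi_N(\|\theta\|_{\R^D})^2$ up to additive constants, which gives part (i) by direct inspection, and $\ell_N$ inherits exact radial symmetry. For (ii), differentiating in $r=\|\theta\|_{\R^D}$ yields $\partial_r\ell_N\propto S_N-N\psi_N(r)$ with $S_N=\sum_i\varepsilon_i=O_P(\sqrt N)$, and this has the required sign on $\{r\ge \rho N^{-b}\}$ as soon as the slope of $\psi_N$ is chosen so that $\psi_N(\rho N^{-b})\gg N^{-1/2}$ uniformly in $N$. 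Part (iii) then follows from a standard test-plus-prior-mass contraction argument: the prior mass $\Pi(B_{cN^{-b}})\gtrsim e^{-CN}$ coming from the Gaussian analysis below is comfortably dominated by the likelihood penalty $e^{\ell_N(\theta)-\ell_N(0)}\lesssim\exp\{-cN\psi_N(s_bN^{-b})^2\}$ available outside $B_{s_bN^{-b}}$.

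The main technical step is a bottleneck inequality
\begin{equation*}
\frac{\Pi(\Theta_{N^{-b},\varepsilon N^{-b}}\mid Z^{(N)})}{\Pi(\mathcal{W}\mid Z^{(N)})}\ge\exp(c'N)\quad\text{with high probability,}\qquad \mathcal{W}:=\Theta_{s_bN^{-b},\eta_N},
\end{equation*}
in direct analogy with Lemma~\ref{lemma:bottleneck_formal}. Because $\ell_N$ is constant on each sphere, this ratio factors into a likelihood ratio between the two radii (which is of order $\exp(\Theta(N\psi_N(N^{-b})^2))$ against the initialisation shell, the barrier shell being closer to the mode) and a prior mass ratio, which therefore must be at least $\exp(\Theta(N))$ in the opposite direction. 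This is the heart of the matter. Diagonalising $\Sigma_\alpha$ gives $\|\theta\|_{\R^D}^2=\sum_{j=1}^D\lambda_j\xi_j^2$ with $\xi_j$ i.i.d.\ $\mcN(0,1)$ and $\lambda_j\asymp j^{-2\alpha/d}$, normalised so that $\mathbb E\|\theta\|_{\R^D}^2\asymp N^{-2b}$. Hanson--Wright / Bernstein-type concentration for this quadratic form should deliver both a non-exponentially-small lower bound on $\Pi(\Theta_{N^{-b},\varepsilon N^{-b}})$ and an exponentially small upper bound $\Pi(\|\theta\|_{\R^D}\le s_bN^{-b})\le e^{-cN}$ for any fixed $s_b$ strictly below the typical scale. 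I expect the main obstacle to be obtaining the $\Theta(N)$ rate in this left-tail bound: for the Mat\'ern eigenvalue profile the naive effective dimension is only of order $D^{1-2\alpha/d}$, and extracting a linear-in-$N$ rate will require carefully balancing a truncation of the spectrum against the Gaussian deviation inequality, and then passing from a ball bound to a density bound on thin annuli.

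Given the bottleneck, part (iv) follows by applying Proposition~\ref{prop:jerrum_simplified} (or its refinement Proposition~\ref{hit}) together with Assumption~\ref{ass:step-eta}. The step-size constraint $\eta_N<s_bN^{-b}$ rules out one-step jumps across $\mathcal{W}$, with per-step failure probability $\le e^{-c_0 N}$ which is harmless under a union bound over the $e^{\min\{c_0,1\}N/2}$ iterations of interest, and the hypothesis $L_N=C\sqrt N$ in the assumption is accommodated by combining standard Gaussian tail bounds for the prior with (iii) to confine the chain to $B_{C\sqrt N}$ with overwhelming probability throughout the run. Finally, to produce the deterministic initialiser $\vartheta_0\in\Theta_{N^{-b},\varepsilon N^{-b}}$ demanded by the statement I would invoke the probabilistic method: the lower bound holds in expectation when $\vartheta_0$ is drawn from $\Pi|_{\Theta_{N^{-b},\varepsilon N^{-b}}}(\cdot\mid Z^{(N)})$, so at least one point of that shell realises it.
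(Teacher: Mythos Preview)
Your high-level architecture is right and matches the paper: a radial $\mathscr G$, a bottleneck inequality between the initialisation shell $\Theta_{N^{-b},\varepsilon N^{-b}}$ and the barrier shell $\Theta_{s_bN^{-b},\eta_N}$, the conductance argument of Proposition~\ref{hit}, the union bound over $e^{\min\{c_0,1\}N/2}$ steps to control excursions (plus an excursion bound to $B_L^c$), and the probabilistic method for the deterministic initialiser. The gap is in your picture of the Mat\'ern prior and, as a consequence, in the tool you propose for the small-ball step.

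You assert that the eigenvalues $\lambda_j\asymp j^{-2\alpha/d}$ are ``normalised so that $\EE\|\theta\|_{\R^D}^2\asymp N^{-2b}$''. They are not: since $2\alpha/d>1$ the series $\sum_j j^{-2\alpha/d}$ converges, so $\EE\|\theta\|_{\R^D}^2$ is of constant order in $N$. Both shells in the bottleneck therefore sit deep in the \emph{small-ball} regime of the prior, and \emph{both} carry prior mass of order $e^{-\Theta(N)}$; your plan to exhibit a ``non-exponentially-small lower bound on $\Pi(\Theta_{N^{-b},\varepsilon N^{-b}})$'' via concentration around the mean is aiming at the wrong target. Hanson--Wright is also the wrong instrument for what remains: it controls fluctuations of $\sum_j\lambda_j\xi_j^2$ on the scale $(\sum_j\lambda_j^2)^{1/2}=O(1)$ and yields only $O(1)$ exponents for the left tail, not $\Theta(N)$. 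You correctly flag this rate as the main obstacle, but the resolution is not a truncation trick within the concentration framework; it is small-deviation theory for Gaussian measures. The paper's key lemma (Lemma~\ref{asb}) uses the Li--Linde correspondence between the metric entropy of the $H^\alpha$ unit ball and small-ball probabilities, together with Gaussian isoperimetry to transfer the infinite-sequence estimate to the $D$-dimensional projection, to obtain the two-sided bound
\[
c_0\big(z+\kappa^{-\alpha/d}z^{-\tau/2}\big)^{-\tau}\ \le\ -\frac{1}{N}\log\Pi\big(\|\theta\|_{\R^D}\le zN^{-b}\big)\ \le\ \bar c_0\, z^{-\tau},\qquad \tau=\frac{1}{b}=\frac{2d}{2\alpha-d}.
\]
The bottleneck inequality then comes from the \emph{difference} of exponents, roughly $c_0(2s_b+\kappa^{-\alpha/d}(2s_b)^{-\tau/2})^{-\tau}-2\bar c_0(1+\varepsilon)^{-\tau}$, which is made large by taking $s_b$ small and $\varepsilon$ large; the likelihood correction in the balance (\ref{bal2}) contributes only $\tfrac{\rho}{2}N^{-b}(1+\varepsilon-s_b)=\smallO(1)$ and is absorbed. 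Without this small-deviation input the argument for parts~(iii)--(iv) does not close.
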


Again, \textbf{iv)} holds as well for $\vartheta_0$ drawn from an absolutely continuous distribution on $\Theta_{N^{-b}, (1+\varepsilon) N^{-b}}$. We also note that $\eps$ depends only on $\alpha, \kappa, d$ and the choice of $\G$ but not on any other parameters.

\begin{remark}\normalfont
As opposed to Theorem \ref{a}, due to the anisotropy of the prior density $\pi$, the posterior distribution is no longer radially symmetric in the preceding theorem, whence part \textbf{ii)} differs from Theorem \ref{a}. But a slightly weaker form of monotonicity of the posterior density $\pi(\cdot|Z^{(N)})$ still holds: the same arguments employed to prove part \textbf{ii)} of Theorem \ref{a} show that $\pi(\cdot|Z^{(N)})$ is decreasing on $\{\theta:\|\theta\|_{\R^D}\ge r N^{-b}\}$ (any $r>0$) along the half-lines through $0$, i.e.
	\begin{equation}\label{strano}
        \mathbb P_0^N\big( \pi(v e|Z^{(N)})\le \pi(v' e|Z^{(N)})~\textnormal{for all}~v\ge v'\ge r, e \in \R^D, \|e\|_{\R^D}=1\big) \xrightarrow{N\to\infty} 1.
	\end{equation}
	We note that this notion precludes the possibility of $\pi(\cdot|Z^{(N)})$ having extremal points outside of the region of dominant posterior mass, and implies that moving toward the origin will \textit{always} increase the posterior density. As a result, many typical Metropolis-Hastings would be encouraged to accept such `radially inward' moves, if they arise as a proposal. Thus, crucially, our exponential hitting time lower bound in part \textbf{iv)} arises not through multimodality, but merely through volumetric properties of high-dimensional Gaussian measures.
\end{remark}

\smallskip

\begin{remark}[On the step-size condition]\normalfont \label{steprem}
%\textcolor{red}{[On the step-size condition.]}
One may wonder whether larger step-sizes can help to overcome the negative result presented in the last theorem. If the step-sizes are `time-homogeneous' and $\gg N^{-b}$ on average, then we may hit the region where the posterior is supported at some time. This would happen `by chance' and not because the data (via $\ell_N$) would suggest to move there, and future proposals will likely be outside of that bulk region, so that the chain will either exit the relevant region again or become deterministic because an accept/reject step refuses to move into such directions. In this sense, a negative result for (polynomially) small step sizes gives fundamental limitations on the ability of the chain to explore the precise characteristics of the posterior distribution. We also remark that the Lipschitz-constants of $\nabla\ell(\theta)$ are of order $D$ or $D^{1+b}$ in the preceding theorems, respectively. A Markov chain obtained from discretising a continuous diffusion process (such as MALA discussed in the next subsection) will generally require step-sizes that are inversely proportional to that Lipschitz constant in order to inherit the dynamics from the continuous process. For such examples, Assumption \ref{ass:step-eta} is natural. But as discussed at the end of the introduction, there exists a variety of `non-local' MCMC algorithms for which this step size assumption may not be satisfied.

%[\textcolor{blue}{Maybe: ...`will generally require step-sizes that are inversely proportional to that Lipschitz constant in order to inherit the convergence dynamics from the continuous process}]
\end{remark}

\subsection{Implications for common MCMC methods with `cold-start'}\label{cst}

The preceding general hitting time bounds apply to commonly used MCMC methods in high-dimensional statistics. We focus in particular on algorithms that are popular with PDE models and inverse problems, see, e.g., \cite{CRSW13, BGLFS17} and also \cite{N22} for many more references. We illustrate this for two natural examples with Metropolis-Hastings adjusted random walk and gradient algorithms. Other examples can be generated without difficulty.

\subsubsection{Preconditioned Crank-Nicolson}
We first give some hardness results for the popular preconditioned Crank-Nicolson (pCN) algorithm. A dimension-free convergence analysis for pCN was given in the important paper by Hairer, Stuart, and Vollmer \cite{HSV14} based on ideas from \cite{HMS11}. The results in the present section show that while the mixing bounds from \cite{HSV14} are in principle uniform in $D$, the implicit dependence of the constants on the conditions on the log-likelihood-function in \cite{HSV14} can re-introduce exponential scaling when one wants to apply the results from \cite{HSV14} to concrete ($N$-dependent) posterior distributions. This confirms a conjecture about pCN made in Section 1.2.1 of \cite{NW20}.

Let $\mathcal C$ denote the covariance of some Gaussian prior on $\R^D$ with density $\pi$. Then the pCN algorithm for sampling from some posterior density $\pi(\theta |Z^{(N)})\propto e^{\ell_N(\theta)} \pi (\theta)$ is given as follows. Let $(\xi_k:k\ge 1)$ be an i.i.d.\ sequence of $\mcN(0,\mathcal C)$ random vectors. For initialiser $\vartheta_0\in \R^D$, step size $\beta>0$ and $k\ge 1$, the MCMC chain is then given by
\begin{enumerate}
	\item \textsc{Proposal:} $p_k \sim \sqrt{1-\beta}\vartheta_{k-1}+ \sqrt \beta \xi_k$,
	\item \textsc{Accept-reject:} Set
	\begin{equation}\label{pcn-algo}
	\vartheta_k= 
	\begin{cases}
		p_k ~~~\textnormal{w.p.}~~\min \big\{ 1,  e^{\ell_N(p_k)-\ell_N(\vartheta_{k-1})} \big\}, \\
		\vartheta_{k-1} ~~~ \textnormal{else}.
	\end{cases}
	\end{equation}
\end{enumerate}
By standard Markov chain arguments one verifies (see \cite{HSV14} or Ch.1~in \cite{N22}) that the (unique) invariant density of $(\vartheta_k:k\ge 1)$ equals $\pi(\cdot|Z^{(N)})$.

We now give a hitting time lower bound for the pCN algorithm which holds true in the regression setting for which the main Theorems \ref{a} and \ref{b} (for generic Markov chains) were derived. In particular, we emphasize that the lower bounds to follow hold for the choice of regression `forward' map $\mathcal G$ constructed in the proofs of Theorems \ref{a} and \ref{b}. As for the general results, we treat the two cases of $\mathcal C=I_D/D$ or $\mathcal C=\Sigma_\alpha$ separately.

\begin{theorem}\label{pcn}
	Let $\vartheta_k$ denote the pCN Markov chain from (\ref{pcn-algo}).
	
	\textbf{i)} Assume the setting of Theorem \ref{a} with $\mathcal C=I_D/D$, and let $\mathcal G$ be as in Theorem \ref{a}. Then there exist constants $c_1,c_2,\eps >0$ such that for any $\beta \le c_1$, there is an initialisation point $\vartheta_0\in \Theta_{1,\eps}$ such that the hitting time $\tau_{B_s} = \inf \{k: \vartheta_k\in B_s \}$ (for $B_s$ as in (\ref{Bs})) satisfies with high probability (under the law of the data and of the Markov chain) as $N\to\infty$ that $\tau_{B_s}\ge \exp \big(c_2 D \big)$.
	
	\smallskip 
	
	\textbf{ii)} Assume the setting of Theorem \ref{b} with $\mathcal C=\Sigma_\alpha$ for $\alpha>d/2$, and let $\mathcal G$ be as in Theorem \ref{a}. Then there exist constants $c_1,c_2,\eps>0$ such that if $\beta \le c_1 N^{-1-2b}$ there is an initialisation point $\vartheta_0\in \Theta_{N^{-b},\eps N^{-b}}$ such that the hitting time $\tau_{B_s} = \inf \{k: \vartheta_k\in B_s \}$ satisfies with high probability that $\tau_{B_s}\ge \exp \big(c_2 D \big)$.
	
\end{theorem}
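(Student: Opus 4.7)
The strategy is to verify Assumption~\ref{ass:step-eta} for the pCN Markov kernel and then invoke Theorem~\ref{a} in case \textbf{i)} and Theorem~\ref{b} in case \textbf{ii)}. Invariance of $\Pi(\cdot|Z^{(N)})$ under pCN is the standard fact from \cite{HSV14}, so the substantive task is the step-size bound. If the proposal is rejected then $\vartheta_k = \vartheta_{k-1}$, so the only motion is through an accepted proposal, and for $\beta\le 1$
\[
    \|\vartheta_k-\vartheta_{k-1}\| \le |1-\sqrt{1-\beta}|\,\|\vartheta_{k-1}\| + \sqrt\beta\,\|\xi_k\| \le \tfrac{\beta}{2}\,\|\vartheta_{k-1}\| + \sqrt\beta\,\|\xi_k\|.
\]
For $\theta\in B_L$ the drift term is bounded by $\beta L/2$, and it remains to estimate $\sqrt\beta\,\|\xi_k\|$ by Gaussian concentration.

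In case \textbf{i)}, $\xi_k\sim \mathcal N(0,I_D/D)$ so $\|\xi_k\|^2 = D^{-1}\chi^2_D$, giving $\mathbb P(\|\xi_k\| \ge 1+u) \le e^{-cDu^2}$ by standard chi-squared tails. Picking $\beta\le c_1$ for a small absolute constant and $L=1+\eps$, the drift $\beta L/2$ and the stochastic contribution $\sqrt\beta(1+1)$ can each be pushed below some fixed $\eta/4<s/2$ at probability $1-e^{-cD} \ge 1-e^{-c_0 N}$ (using $D\simeq\kappa N$). In case \textbf{ii)}, the condition $\alpha>d/2$ ensures $\mathrm{tr}(\Sigma_\alpha)\le C$ and $\|\Sigma_\alpha\|_{\mathrm{op}}\le C$, whence Gaussian concentration for the seminorm (Borell-TIS, or Hanson-Wright on $\|\xi_k\|^2$) gives $\mathbb P(\|\xi_k\| \ge C'+t)\le e^{-ct^2}$. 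To secure an $e^{-c_0 N}$ tail we must take $t\asymp\sqrt N$, so $\|\xi_k\|\lesssim \sqrt N$ with the required probability. With $L=C\sqrt N$, the demand $\tfrac{\beta}{2}L+\sqrt\beta\,\|\xi_k\| \le \eta/4 \asymp N^{-b}$ then forces $\sqrt{\beta N} \lesssim N^{-b}$, which is precisely $\beta\le c_1 N^{-1-2b}$; the drift contributes only the weaker constraint $\beta\lesssim N^{-1/2-b}$, which is automatically satisfied. Thus Assumption~\ref{ass:step-eta} holds with $\eta_N\asymp N^{-b}\in (0,s_b N^{-b})$ and an absolute constant $c_0>0$.

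Having verified Assumption~\ref{ass:step-eta}, the hitting time lower bound $\tau_{B_s}\ge \exp(\min\{c_0,1\}N/2)$ follows: in case \textbf{ii)} directly from Theorem~\ref{b}(iv), whose initialisation region $\Theta_{N^{-b},\eps N^{-b}}$ coincides with that of Theorem~\ref{pcn}(ii); and in case \textbf{i)} from a mild extension of Theorem~\ref{a}(iv), noting that the precise radius $2/3$ of the initialisation annulus there plays no special role in the underlying bottleneck argument and may be replaced by any fixed radius in $(s,1+\eps)$, in particular by $1$, with the volumetric gap unchanged up to constants. Since $D\simeq\kappa N$, $\exp(\min\{c_0,1\}N/2)=\exp(c_2 D)$ with $c_2=\min\{c_0,1\}/(2\kappa)$. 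The main obstacle in this plan is producing the $e^{-c_0 N}$ tail in case \textbf{ii)}: because $\|\xi_k\|$ has typical scale $O(1)$ but has to be controlled at the atypical scale $\sqrt N$ to reach such a strong tail, an extra factor of $N^{-1/2}$ is lost in $\sqrt\beta$ relative to the naive scaling, which is the source of the $N^{-1}$ in the step-size constraint $\beta\le c_1 N^{-1-2b}$.
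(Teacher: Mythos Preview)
Your proposal is correct and follows essentially the same approach as the paper: verify Assumption~\ref{ass:step-eta} for the pCN kernel via the decomposition $\|\vartheta_k-\vartheta_{k-1}\|\le |1-\sqrt{1-\beta}|\,\|\vartheta_{k-1}\|+\sqrt\beta\,\|\xi_k\|$, control the drift term deterministically on $B_L$, bound the Gaussian term by concentration of $\|\xi_k\|$ (chi-squared/Lipschitz concentration for $I_D/D$, Borell--TIS for $\Sigma_\alpha$), and then invoke Theorems~\ref{a}(iv) and~\ref{b}(iv). The paper packages the step-size estimate as a separate lemma (Lemma~\ref{pcn-steps}) and phrases the constraint on $\beta$ as $\beta\le c\eta^2/D$ rather than arriving at it via ``$t\asymp\sqrt N$ in the tail'', but this is the same computation reorganised. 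One point worth noting: you explicitly flag the mismatch between the initialisation annulus $\Theta_{2/3,\eps}$ in Theorem~\ref{a}(iv) and $\Theta_{1,\eps}$ in Theorem~\ref{pcn}(i), and justify why the radius can be shifted; the paper's proof simply applies Theorem~\ref{a}(iv) without comment, so your treatment is in fact more careful on this (cosmetic) discrepancy.
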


\subsubsection{Gradient-based Langevin algorithms}

We now turn to \textit{gradient-based} Langevin algorithms which are based on the discretization of continuous-time diffusion processes \cite{CRSW13, D17}. A polynomial time convergence analysis for the \textit{unadjusted} Langevin algorithm in the strongly log-concave case has been given in \cite{D17, DM19} and also in \cite{rigetal} for the Metropolis-adjusted case (MALA). We show here that for unimodal but not globally log-concave distributions, the MCMC scheme can take an exponential time to reach the bulk of the posterior distribution. For simplicity we focus on the Metropolis-adjusted Langevin algorithm which is defined as follows. Let $(\xi_k:k\ge 1)$ be a sequence of i.i.d. $\mcN(0,I_D)$ variables, and let $\gamma>0$ be a step-size.

\begin{enumerate}
	\item \textsc{Proposal:} $p_{k} = \vartheta_{k-1} + \gamma \nabla \log \pi (\vartheta_{k-1}| Z^{(N)}) + \sqrt{2\gamma} \xi_{k}$.
	\item \textsc{Accept-reject:} Set
	\begin{equation}\label{mala}
		\vartheta_{k}= 
		\begin{cases}	p_{k}~~~\textnormal{w.p.}~~\min \Big\{ 1,  \frac{ \pi(p_{k}|Z^{(N)}) \exp\big(  -  \| \vartheta_{k-1} -p_{k}  -\gamma \nabla \log \pi(p_{k}|Z^{(N)} )\|^2 \big) }{\pi(\vartheta_{k-1}|Z^{(N)})  \exp\big(  -  \| p_{k} - \vartheta_{k-1} -\gamma \nabla \log \pi(\vartheta_{k-1}|Z^{(N)} )\|^2 \big) } \Big\}, \\
			\vartheta_{k-1} ~~~ \textnormal{else}.
		\end{cases}
	\end{equation}
\end{enumerate}

Again, standard Markov chain arguments show that $\Pi(\cdot|Z^{(N)})$ is indeed the (unique) invariant distribution of $(\vartheta_k:k\ge 1)$. We note here that for the forward $\mathcal G$ featuring in our results to follows, $\nabla \log \pi$ may only be well-defined (Lebesgue-) almost everywhere on $\R^D$ due to our piecewise smooth choice of $w$, see (\ref{was}) below. However, since all proposal densities involved possess a Lebesgue density, this specification almost everywhere suffices in order to propagate the Markov chain with probability~$1$. 
Alternatively one could also straightforwardly avoid this technicality by smoothing our choice of function $w$ in (\ref{was}), which we refrain from for notational ease.

\begin{theorem}\label{MALA} Let $\vartheta_k$ denote the MALA Markov chain from (\ref{mala}). 

\textbf{i)} Assume the setting of Theorem \ref{a}, with $\mcN(0, I_D/D)$ prior, and let $\mathcal G$ also be as in Theorem \ref{a}. There exists some $c_1,c_2,\eps>0$ such that if the step size of $(\vartheta_k:k\ge 1)$ satisfies $\gamma\le c_1/N$, then there is an initialisation point $\vartheta_0\in \Theta_{1,\eps}$ such that
the hitting time $\tau_{B_s} = \inf \{k: \vartheta_k\in B_s \}$ (for $B_s$ as in (\ref{Bs})) satisfies with high probability (under the law of the data and of the Markov chain) as $N\to\infty$ that $\tau_{B_s}\ge \exp \big(c_2 D \big)$.

\textbf{ii)} Assume the setting of Theorem \ref{b}, with a $\mcN(0, \Sigma_\alpha)$ prior, and let $\mathcal G$ also be as in Theorem \ref{b}. Then there exist some constant $c_1,c_2,\eps>0$ such that whenever $\gamma\le c_1N^{-1-b-2\alpha}$, there is an initialisation point $\vartheta_0\in \Theta_{N^{-b},\eps N^{-b}}$, such that the hitting time $\tau_{B_s} = \inf \{k: \vartheta_k\in B_s \}$ satisfies with high probability (under the law of the data and of the Markov chain) that $\tau_{B_s}\ge \exp \big(c_2 D \big)$.
\end{theorem}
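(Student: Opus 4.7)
The plan is to deduce both parts of Theorem~\ref{MALA} from the generic hitting-time lower bounds of Theorems~\ref{a} and~\ref{b} by verifying that the MALA kernel with the prescribed step sizes satisfies Assumption~\ref{ass:step-eta}. Condition (i) of that assumption, namely invariance of $\Pi(\cdot|Z^{(N)})$ under the MALA kernel, is a direct consequence of the Metropolis--Hastings detailed-balance construction and will be treated as standard. All of the real work lies in verifying condition (ii), the uniform single-step concentration on the ball $B_L$.

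To that end, since the accept-reject step leaves $\vartheta_{k-1}$ unchanged with positive probability, the one-step displacement is dominated by the proposal displacement $\|p_k - \vartheta_{k-1}\|$. So it suffices to prove that for some $c_0>0$ and all $\theta \in B_L$,
\[
\mathrm{Pr}\Big(\gamma\,\|\nabla \log \pi(\theta|Z^{(N)})\| + \sqrt{2\gamma}\,\|\xi_k\| \ge \eta/2\Big) \le e^{-c_0 N}.
\]
The Gaussian piece is easy: standard $\chi^2_D$-concentration gives $\|\xi_k\|^2 \le 2D$ with probability at least $1-e^{-cN}$, so $\sqrt{2\gamma}\|\xi_k\| \le C\sqrt{\gamma N}$ with exponentially high probability, and under the stated step-size conditions this is comfortably smaller than $\eta/4$ in both parts.

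The gradient term is the technical core and will be treated case by case after the split $\nabla \log \pi(\theta|Z^{(N)}) = \nabla \ell_N(\theta) + \nabla \log \pi(\theta)$. In part (i), $\nabla \log \pi(\theta) = -D\theta$ has norm at most $D(1+\eps) \le C_1 N$ on $B_{1+\eps}$; and for the likelihood gradient, using the explicit forward map $\mathcal{G}$ from the proof of Theorem~\ref{a} together with sub-Gaussian tail bounds on the empirical sums involving the regression errors $\varepsilon_i$, I expect $\|\nabla \ell_N(\theta)\| \le C_2 N$ uniformly on $B_L$ with high $P_0^N$-probability. Combining, $\gamma\|\nabla \log \pi(\theta|Z^{(N)})\| \le C_3 \gamma N \le C_3 c_1$, which is at most $\eta/4$ for $c_1$ sufficiently small. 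In part (ii), $\nabla \log \pi(\theta) = -\Sigma_\alpha^{-1}\theta$ obeys $\|\Sigma_\alpha^{-1}\theta\| \le \|\Sigma_\alpha^{-1}\|_{\mathrm{op}} L$, with $\|\Sigma_\alpha^{-1}\|_{\mathrm{op}}$ polynomially large in $N$ by the Whittle--Mat\'ern spectrum; combined with a polynomial bound on $\|\nabla \ell_N\|$ over the now $N$-dependent ball $B_L=B_{C\sqrt N}$, the exponent $-1-b-2\alpha$ in the step-size hypothesis is precisely what is needed to drive $\gamma\|\nabla \log \pi(\theta|Z^{(N)})\|$ below $\eta/4 \sim N^{-b}/4$.

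The main obstacle will be booking all polynomial factors in part (ii) consistently, in particular pinning down $\|\nabla \ell_N\|$ on the growing ball $B_{C\sqrt N}$ from the specific form of $\mathcal{G}$ in Theorem~\ref{b}, using tail bounds for linear and quadratic forms in the sub-Gaussian errors. Once Assumption~\ref{ass:step-eta} is verified with the same $c_0, L, \eta$ as in the target theorems, an application of Theorem~\ref{a} or~\ref{b} yields $\tau_{B_s} \ge \exp(\min\{c_0,1\}N/2) \ge \exp(c_2 D)$ for a suitable $c_2>0$, giving the claimed bound.
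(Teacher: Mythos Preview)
Your proposal is correct and follows essentially the same route as the paper's proof: reduce to verifying Assumption~\ref{ass:step-eta} for the MALA kernel, bound the proposal displacement by splitting $\nabla\log\pi(\theta|Z^{(N)})=\nabla\ell_N(\theta)+\nabla\log\pi(\theta)$, control the prior gradient via $-D\theta$ (resp.\ $-\Sigma_\alpha^{-1}\theta$ with $\|\Sigma_\alpha^{-1}\|_{\mathrm{op}}\simeq D^{2\alpha}$), control $\|\nabla\ell_N\|$ via the explicit radial form of $\mathcal G$ and Chebyshev/sub-Gaussian bounds on the empirical sums, and handle the Gaussian innovation by $\chi^2_D$-concentration. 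The only cosmetic difference is that the paper obtains a \emph{global} bound $\sup_{\theta\in\R^D}\|\nabla\ell_N(\theta)\|=\mathcal O_P(N^{1+b})$ (resp.\ $\mathcal O_P(N)$) directly from the boundedness of $w'$ and $w'/\sqrt w$, rather than restricting to $B_L$, but this has no effect on the argument.
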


As mentioned in Remark \ref{steprem}, a bound on the step-size that is inversely proportional to the Lipschitz constant of $\nabla \ell$ is natural for algorithms like MALA that arise from discretisation of a continuous time Markov process, see, e.g., \cite{DM19, rigetal}. We emphasise again that these Lipschitz constants are $D$- and $N$-dependent, so that the required bounds on $\gamma$ are not unnatural. `Optimal' step-size prescriptions for MALA \cite{RR01, BPS04, MPS12, rigetal} derived for Gaussian and log-concave targets or, more generally, mean field limits (in which the posterior distribution possesses a product or mean-field structure, unlike in the models considered here) would need to be adjusted to our model classes to be comparable.

\section{Proofs of the main theorems}\label{proofs}

We begin in Section \ref{radsec} by constructing the family of regression maps $\G$ underlying our results from Section \ref{sec:results_gaussian_priors}. Sections \ref{post-ratio} and \ref{hit-sec} reduce the hitting time bounds from Theorems \ref{a} and \ref{b} (for general Markov chains) to hitting time bounds for intermediate `free energy barriers' that the Markov chain needs to travel through. Subsequently, Theorems \ref{b} and \ref{a} are proved in Sections \ref{b-pf} and \ref{a-pf} respectively. Finally, the proofs for pCN (Theorem \ref{pcn}) and MALA (Theorem \ref{MALA}) are contained in Section \ref{algo-pfs}. 

\subsection{Radially symmetric choices of \texorpdfstring{$\mathscr G$}{G}}\label{radsec}

We start with our parameterisation of the map $\G$. In our regression model and since $\EE\varepsilon^2=1$,
\begin{equation}\label{liko}
\ell(\theta) =-\frac{N}{2}\mathrm E_{\theta_0}^1|Y-\mathscr G(\theta)(X)|^2 = -\frac{N}{2} \|\mathscr G(\theta_0) - \mathscr G(\theta)\|_{L^2}^2 - \frac{N}{2},~~~\theta \in \R^D.
\end{equation}
We have $\theta_0=0$ and by subtracting a fixed function $\G(0)$ from $\G(\theta)$ if necessary we can also assume that $\mathscr G(\theta_0)=0$. In this case, since $\mathrm{vol}(\mathcal X)=1$,
\begin{equation}\label{liko1}
\ell(\theta) =  -\frac{N}{2} \| \mathscr G(\theta)\|_{L^2}^2 - \frac{N}{2},
\end{equation}
Take a bounded continuous function $w: [0,\infty] \to [0, \|w\|_\infty)$ with a unique minimiser $w(0)=0$ and take $\mathscr G$ of the `radial' form $$\mathscr G(\theta) = \sqrt{w(\|\theta\|_{\R^D})} \times g(x),~ \theta \in \R^D, x \in \mathcal X,$$ where
$$g: \mathcal X \to [g_{\min}, g_{\max}],~0<g_{\min}<g_{\max}<\infty,~\|g\|_{L^2_\mu(\mathcal X)}=1.$$
The assumption $\mathcal{G}(\theta_0) = 0$ implies $Y_i=0+\varepsilon_i$ under $P_{\theta_0}^N$, so that we have
\begin{align}\label{likemp}
\ell_N(\theta) & = -\frac{1}{2}\sum_{i=1}^N|\varepsilon_i - \sqrt{w(\|\theta\|)}g(X_i)|^2  \notag \\
&= -\frac{w(\|\theta\|_{\R^D})}{2}\sum_{i=1}^N g^2(X_i) - \frac{1}{2} \sum_{i=1}^N \varepsilon_i^2 + \sqrt {w(\|\theta\|)}\sum_{i=1}^N \varepsilon_i g(X_i)
\end{align}
and the average log-likelihood is
\begin{equation} \label{likav}
\ell(\theta) = \mathrm E_{\theta_0}^N\ell_N(\theta) = -\frac{N}{2} w(\|\theta\|_{\R^D}) - \frac{N}{2}, \theta \in \R^D.
\end{equation}
Define $\epsilon$-annuli of Euclidean space 
\begin{equation}\label{thetas}
\Theta_{r,\epsilon} =  \big\{\theta \in \R^D: \|\theta\|_{\R^D} \in (r, r+\epsilon)\big\},~r \ge0.
\end{equation} 
We then also set, for any $s\ge 0,~\epsilon>0$, $$w_-(r, \epsilon) = \inf_{s \in(r, r+\epsilon)}w(s),~~w_+(r, \epsilon)=\sup_{s \in (r, r+\epsilon)}w(s).$$ For our main theorems the map $w$ will be monotone increasing and the preceding notation $w_-, w_+$ is then not necessary, but Proposition \ref{nubd} is potentially also useful in non-monotone settings (as remarked after its proof), hence the slightly more general notation here. 

\smallskip

The choice that $\G$ is radial is convenient in the proofs, but means that the model is only identifiable up to a rotation for $\theta \neq 0$. One could easily make it identifiable by more intricate choices of $\G$, but the main point for our negative results is that the function $\ell$ has a unique mode at the ground truth parameter $\theta_0$ and is identifiable there. 

\subsubsection{A locally log-concave, globally monotone choice of \texorpdfstring{$w$}{w}}

Define for $t<L$ and any $r>0$ the function $w: [0,\infty) \to \R$ as

\medskip
\begin{minipage}{0.45\textwidth}
\begin{align}\label{was}
w(r) &= 4(Tr)^21_{[0,t/2)} \\  
&~~~~+ [(Tt)^2 + T (r-t/2)] 1_{[t/2,t)}(r) \notag \\
&~~~~+ [(Tt)^2+(Tt/2) + \rho (r-t)]1_{[t,L)}(r) \notag \\
&~~~~+ [(Tt)^2+(Tt/2)+\rho(L-t)]1_{[L,\infty)}(r), \notag
\end{align}
\hspace{0.1\linewidth}
\end{minipage}
\begin{minipage}{0.35\textwidth}
\centering
  \includegraphics[width=\textwidth]{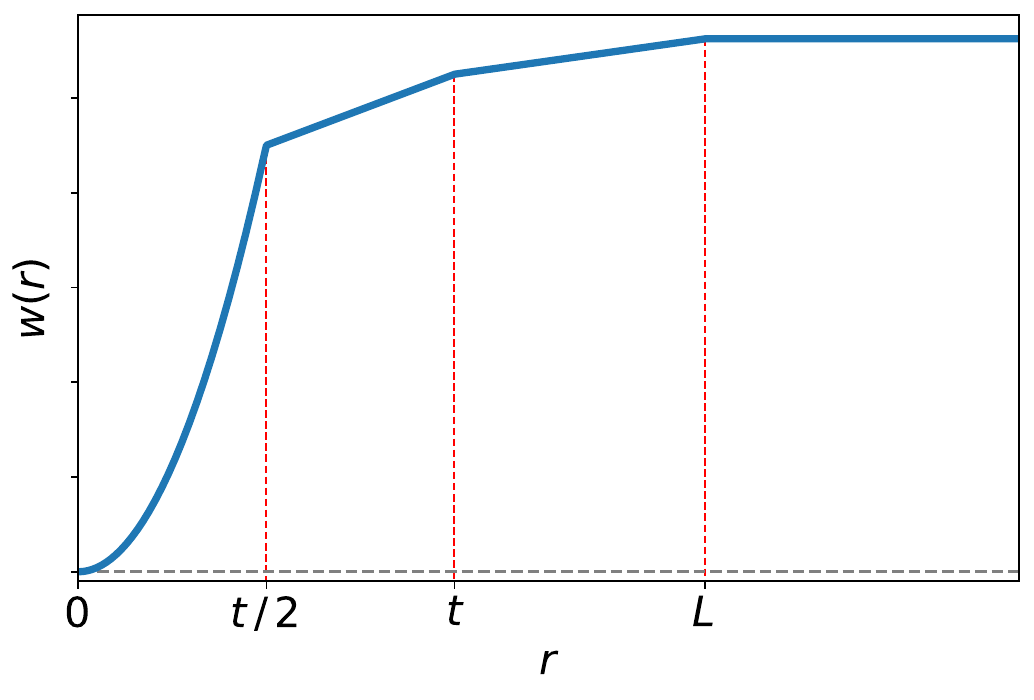}
\end{minipage}

\noindent
where $T> \rho,$ are fixed constants to be chosen. Note that $w$ is monotone increasing and 
\begin{equation}
\|w\|_\infty = (Tt)^2+ (Tt/2) + \rho(L-t)<\infty.
\end{equation}
The function $w$ is quadratic near its minimum at the origin up until $t/2$, from when onwards it is piece-wise linear. In the linear regime it initially has a `steep' ascent of gradient $T$ until $t$, then grows more slowly with small gradient $\rho$ from $t$ until $L$, and from then on is constant.
The function $w$ is not $C^\infty$ at the points $r=t/2, r=t, r=L$, but we can easily make it smooth by convolving with a smooth function supported in small neighbourhoods of its breakpoints $r$ without changing the findings that follow. We abstain from this to simplify notation.

The following proposition summarises some monotonicity properties of the empirical log-likelihood function arising from the above choice of $w$.

\begin{proposition}\label{mono}
Let $w$ be as in (\ref{was}). Then there exists $C>0$ such that for any $r_0>0$ and $N\ge 1$, we have
\[ P_0^N\big( \sup_{r_0\le r<s\le L}~\sup_{\|\theta_s\|=s,\|\theta_r\|=r} \frac{\ell_N(\theta_s) - \ell_N(\theta_r)}{w(s)-w(r)} \le - \frac N4\big) \ge 1 -\frac CN - \frac C{N w(r_0)}. \]
In particular if $r_0<t/2$ is such that $(Tr_0)^2N \to \infty$ as $N\to \infty$ then the r.h.s.~is $1-o(1)$.
\end{proposition}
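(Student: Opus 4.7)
The plan is to exploit the radial structure of $\mathscr G$: since $\ell_N(\theta)$ depends on $\theta$ only through $w(\|\theta\|_{\R^D})$, the inner supremum over unit directions is trivial, and by (\ref{likemp}), for any $\|\theta_s\|_{\R^D}=s$ and $\|\theta_r\|_{\R^D}=r$,
\[
\ell_N(\theta_s)-\ell_N(\theta_r)=-\frac{w(s)-w(r)}{2}\,S_g+\bigl(\sqrt{w(s)}-\sqrt{w(r)}\bigr)\,S_{\eps g},
\]
where I abbreviate $S_g:=\sum_{i=1}^N g^2(X_i)$ and $S_{\eps g}:=\sum_{i=1}^N \varepsilon_i g(X_i)$. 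Note that the noise terms $\tfrac12\sum\varepsilon_i^2$ cancel in the difference.

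Dividing through by $w(s)-w(r)$ and using the identity $\sqrt v-\sqrt u=(v-u)/(\sqrt v+\sqrt u)$, the ratio inside the probability equals
\[
-\frac{S_g}{2}+\frac{S_{\eps g}}{\sqrt{w(s)}+\sqrt{w(r)}}.
\]
Since the function $w$ defined in (\ref{was}) is monotonically increasing on $[0,L]$ and $r_0\le r<s\le L$, one has $\sqrt{w(s)}+\sqrt{w(r)}\ge 2\sqrt{w(r_0)}$, so regardless of the sign of $S_{\eps g}$ the outer supremum over $(r,s)$ is bounded deterministically by
\[
-\tfrac{1}{2}S_g+\tfrac{1}{2\sqrt{w(r_0)}}\,|S_{\eps g}|.
\]
It therefore suffices to exhibit an event of the advertised probability on which $S_g\ge 3N/4$ and $|S_{\eps g}|\le N\sqrt{w(r_0)}/4$, since these two inequalities combine to give the required bound $-3N/8+N/8=-N/4$ on the supremum.

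Both tails will come from Chebyshev's inequality. For the first, boundedness of $g$ gives $\mathrm{Var}(S_g)=N\,\mathrm{Var}(g^2(X))\le N g_{\max}^4$, so $P_0^N(S_g<3N/4)\le 16 g_{\max}^4/N$. For the second, independence of $(\varepsilon_i)$ and $(X_i)$ together with $\mathrm E \varepsilon_i=0$ and $\|g\|_{L^2_\mu}=1$ yield $\mathrm{Var}(S_{\eps g})=N$, whence $P_0^N(|S_{\eps g}|>N\sqrt{w(r_0)}/4)\le 16/(N w(r_0))$. A union bound closes the main claim, and the supplementary remark follows at once because for $r_0<t/2$ one has $w(r_0)=4(Tr_0)^2$, so $N w(r_0)=4(Tr_0)^2 N\to\infty$ under the stated hypothesis.

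There is no real obstacle here: the only substantive step is the radial reduction, which collapses the two-dimensional supremum over $(r,s)$ into a single deterministic worst-case inequality in terms of $S_g$ and $S_{\eps g}$, after which two elementary second-moment bounds suffice.
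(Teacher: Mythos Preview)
Your proof is correct and follows essentially the same route as the paper's: both use the radial form (\ref{likemp}) to rewrite the ratio as $-\tfrac12 S_g + S_{\eps g}/(\sqrt{w(s)}+\sqrt{w(r)})$ and then control the two centred sums by Chebyshev's inequality. Your presentation is in fact slightly cleaner, since you isolate a single data-dependent event (in terms of $S_g$ and $S_{\eps g}$ only) on which the bound holds \emph{simultaneously} for all $r_0\le r<s\le L$, whereas the paper bounds the probability for fixed $(r,s)$ and then invokes monotonicity of $w$ to pass to the uniform statement.
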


\begin{proof}
Recalling (\ref{likav}), (\ref{likemp}) and since $w$ is monotonically increasing, we bound
\begin{align*}
&P_0^N(\ell_N(\theta_s) - \ell_N(\theta_r) > (N/4) (w(r)-w(s))) \\
&=P_0^N\big(\ell_N(\theta_s)-\ell_N(\theta_r) - (\ell(\theta_s)-\ell(\theta_r)) > -\frac{N}{4}(w(r)-w(s))\big) = \\
& \Pr\Big(\frac{(w(r)-w(s))}{2}\sum_{i=1}^N (g^2(X_i)-1) + \big(\sqrt{w(s)}-\sqrt{w(r)}\big) \sum_{i=1}^N\varepsilon_i g(X_i) > \frac{N}{4}(w(s)-w(r)) \Big) \\
&= \Pr\Big(-\sum_{i=1}^N (g^2(X_i)-\EE g^2(X))/2 + \frac{1}{\sqrt {w(s)} + \sqrt {w(r)}}\sum_{i=1}^N\varepsilon_i g(X_i) > \frac{N}{4} \Big) \\
&\le \Pr\Big(\big|\sum_{i=1}^N (g^2(X_i)-\EE g^2(X)) \big|> N/4 \Big) + \Pr\Big(|\sum_{i=1}^N\varepsilon_i g(X_i)| > \frac{2N \sqrt {w(r_0)}}{8} \Big) \\
&= \mcO(1/N) + \mcO(1/(N w(r_0))),
\end{align*}
using Chebyshev's inequality in the last step. Since the events in the penultimate step do not depend on $r < s\in [r_0,L]$, the result follows.

\end{proof}

\subsection{Bounds for posterior ratios of annuli}\label{post-ratio}

A key quantity in the proofs to follow will be to obtain asymptotic $(N \to \infty)$ bounds of the following functional (recalling the definition of the Euclidean annuli $\Theta_{r,\eps}$ from (\ref{thetas})),
\begin{equation}\label{fp}
\mathscr F_N(r,\eps) =  \frac{1}{N}  \log \int_{\Theta_{r,\eps}} e^{\ell_N(\theta)}d\Pi(\theta),~~ r\ge 0,~\eps >0,
\end{equation}
in terms of the map $w$. 
As a side note, we remark that this functional has a long history in the statistical physics of glasses, in which it is often referred to as the \emph{Franz-Parisi} potential \cite{franz1995recipes,bandeira2022franz}.

\begin{proposition}\label{nubd}
Consider the regression model (\ref{model}) with radially symmetric choice of $\G$ from Subsection \ref{radsec} such that $\|w\|_\infty \le W$ for some fixed $W<\infty$ (independent of $D,N$), and let $\Pi = \Pi_N$ denote a sequence of prior probability measures on $\R^D$.

\textbf{i)} Suppose that for some radii $0<s<\sigma$, constants $\eps,\eta,\nu>0$ and for all $N\ge 1$ large enough, we have
\begin{equation}\label{hypo}
\frac{1}{N}\log \frac{\Pi(\Theta_{s,\eta})}{\Pi(\Theta_{\sigma,\eps})} \le -2\nu - \frac{(w_+(\sigma,\eps) -w_-(s,\eta))}{2}.
\end{equation}
Then the posterior distribution $\Pi(\cdot|Z^{(N)})$ from (\ref{post}) arising in the model (\ref{model}) satisfies that with high $P_{0}^N$-probability as $N\to \infty$,
\begin{equation}\label{bdv}
\frac{\Pi(\Theta_{s,\eta}|Z^{(N)})}{\Pi(\Theta_{\sigma,\eps}|Z^{(N)})} \le e^{-\nu N}.
\end{equation}

\textbf{ii)} If in addition $w$ is monotone increasing on $[0,\infty)$ and if for some $L>1+\eps$,
	\begin{equation}\label{L-hypo}
		\frac{1}{N}\log \frac{\Pi(B_{L}^c)}{\Pi(\Theta_{\sigma,\eps})} \le -2\nu,
\end{equation}
then the posterior distribution $\Pi(\cdot|Z^{(N)})$ also satisfies (with high probability as $N\to\infty$) that
\begin{equation}\label{L-ratio}
	\frac{\Pi(B_{L}^c|Z^{(N)})}{\Pi(\Theta_{\sigma,\eps}|Z^{(N)})} \le e^{-\nu N}.
\end{equation}
\end{proposition}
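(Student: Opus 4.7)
The plan is to bound the posterior ratios by separately controlling $\sup \ell_N$ over the numerator region and $\inf \ell_N$ over the denominator region, thereby reducing everything to the prior mass hypotheses (\ref{hypo}) and (\ref{L-hypo}). The starting point is the elementary inequality
\begin{equation*}
\log\frac{\Pi(A|Z^{(N)})}{\Pi(B|Z^{(N)})} \;=\; \log\frac{\int_A e^{\ell_N}d\Pi}{\int_B e^{\ell_N}d\Pi} \;\le\; \sup_{\theta\in A}\ell_N(\theta) - \inf_{\theta\in B}\ell_N(\theta) + \log\frac{\Pi(A)}{\Pi(B)},
\end{equation*}
valid for any Borel sets $A,B$, which converts both claims into a deterministic estimate for fluctuations of $\ell_N$ over annuli.

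From the explicit expression (\ref{likemp}), the $\theta$-independent contribution $-\frac12\sum_i\eps_i^2$ is common to both extremizations and cancels in the difference, while the rest depends on $\theta$ only through the scalar $w(\|\theta\|_{\R^D})$:
\begin{equation*}
\ell_N(\theta) \;=\; -\tfrac12\, w(\|\theta\|_{\R^D})\,A_N + \sqrt{w(\|\theta\|_{\R^D})}\,B_N + \mathrm{const}, \qquad A_N := \sum_i g^2(X_i),\ \ B_N := \sum_i \eps_i g(X_i).
\end{equation*}
Since $g$ is bounded with $\mathrm{E} g^2(X) = 1$, a Chebyshev bound for $A_N$ and a sub-Gaussian tail bound for $B_N$ (exactly as already deployed in the proof of Proposition~\ref{mono}) yield a high-$P_0^N$-probability event on which $|A_N - N|\le C\sqrt N$ and $|B_N|\le C\sqrt N$. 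Using $w\le W$, one then reads off on this event
\begin{equation*}
\sup_{\theta\in\Theta_{s,\eta}}\ell_N(\theta) - \inf_{\theta\in\Theta_{\sigma,\eps}}\ell_N(\theta) \;\le\; \tfrac{N}{2}\bigl(w_+(\sigma,\eps)-w_-(s,\eta)\bigr) + C'\sqrt N,
\end{equation*}
with $C'$ depending only on $C$ and $\sqrt W$. Combining with hypothesis (\ref{hypo}), the two terms of order $\tfrac{N}{2}(w_+(\sigma,\eps)-w_-(s,\eta))$ cancel exactly, leaving $-2\nu N + O(\sqrt N) \le -\nu N$ for $N$ large, which is part~\textbf{i)}.

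For part~\textbf{ii)} the strategy is identical, but $B_L^c$ replaces $\Theta_{s,\eta}$. Since $w$ is monotone increasing and bounded by $W$, one has $w(\|\theta\|)\in[w(L),W]$ for $\theta\in B_L^c$, so the supremum of $\ell_N$ over $B_L^c$ reduces to the one-dimensional maximization of $u\mapsto -\tfrac12 uA_N+\sqrt u\,B_N$ over $[w(L),W]$. This map has its unconstrained maximum at $u^* = (B_N/A_N)^2 = O(1/N)$, so for $N$ large enough that $w(L) > u^*$ it is monotone decreasing on $[w(L),W]$, giving $\sup_{B_L^c}\ell_N\le -\tfrac{N}{2}w(L)+O(\sqrt N)+\mathrm{const}$. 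Since $w(L)\ge w_+(\sigma,\eps)$ by monotonicity (as soon as $L\ge\sigma+\eps$, which is implicit in the setup so that the two sets are disjoint), the same derivation as in part~\textbf{i)}, now using hypothesis (\ref{L-hypo}), yields (\ref{L-ratio}). The only real technical obstacle is obtaining $O(\sqrt N)$-control on $\sqrt{w(\|\theta\|)}\,B_N$ uniformly over an annulus; this is immediate here because $\ell_N$ depends on $\theta$ only through $\|\theta\|_{\R^D}$, and the resulting $O(\sqrt N)$ error is comfortably absorbed by the slack $2\nu$ (rather than $\nu$) in the prior-ratio hypotheses.
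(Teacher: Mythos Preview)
Your proof is correct and follows essentially the same approach as the paper: both bound $\sup_A\ell_N-\inf_B\ell_N$ using the radial form (\ref{likemp}), control the random sums $\sum g^2(X_i)$ and $\sum\eps_i g(X_i)$ by Chebyshev-type inequalities to obtain $O(\sqrt N)$ fluctuations (uniformly since $\|w\|_\infty\le W$), and absorb these into the $2\nu$ slack. The paper packages the same estimates as two-sided bounds on the functional $\mathscr F_N(r,\epsilon)=N^{-1}\log\int_{\Theta_{r,\epsilon}}e^{\ell_N}d\Pi$, while you go straight to the ratio inequality; and for part~\textbf{ii)} the paper simply says ``obvious modification'' whereas you spell out the one-variable optimisation in $u=w(\|\theta\|)$ --- your observation that $u^*=(B_N/A_N)^2=O(1/N)<w(L)$ is correct but in fact unnecessary, since the cruder bound $-\tfrac12 w(\|\theta\|)A_N+\sqrt{w(\|\theta\|)}\,B_N\le -\tfrac12 w(L)A_N+\sqrt W\,|B_N|$ (using only $w(\|\theta\|)\ge w(L)$ and $A_N>0$) already suffices.
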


\begin{remark}[The prior condition for \texorpdfstring{$w$}{w} from (\ref{was})]
    \normalfont
If $\sigma>s>t$, for $w$ from (\ref{was}), the `likelihood' term in Proposition \ref{nubd} is
\begin{equation}\label{balance}
 \frac{w_+(\sigma,\eps)}{2} - \frac{w_-(s,\eta)}{2} = \frac{\rho(\sigma+\eps -t) -\rho(s-t) }{2} = \frac{\rho}{2}(\sigma+\eps -s)>0,
\end{equation}
so that if we also assume
\begin{equation}\label{supbd}
Tt + \rho L = \smallO(\sqrt N)
\end{equation}
to control $\omega_N, \omega_N'$ in the proof that follows, then to verify (\ref{hypo}) it suffices to check 
\begin{equation}\label{bal2}
\frac{1}{N}\log \frac{\Pi(\Theta_{s,\eta})}{\Pi(\Theta_{\sigma,\eps})} \le -2\nu - \frac{\rho}{2}(\sigma+\eps -s) \end{equation}
for all large enough $N$.
\end{remark}

\begin{proof}
	\textbf{Proof of part i).}
From the definition of $\ell_N$ in (\ref{likemp}) we first notice that for all $r\ge 0,\epsilon>0$,
\begin{equation*}
\inf_{\theta \in \Theta_{r,\epsilon}} \ell_N(\theta) \ge -\frac{1}{2}\sum_{i=1}^N\varepsilon_i^2 - \frac{1}{2}w_+(r,\epsilon) \sum_{i=1}^N g^2(X_i) - \sqrt{w_+(r,\epsilon)} |\sum_{i=1}^N \varepsilon_i g(X_i)|,
\end{equation*}
and
\begin{equation*}
\sup_{\theta \in \Theta_{r,\epsilon}} \ell_N(\theta) \le -\frac{1}{2}\sum_{i=1}^N\varepsilon_i^2 - \frac{1}{2}w_-(r,\epsilon) \sum_{i=1}^N g^2(X_i) + \sqrt{w_+(r,\epsilon)} |\sum_{i=1}^N \varepsilon_i g(X_i)|,
\end{equation*}

\smallskip

We can now further bound, for our $\mathscr G$,

\begin{align*}
 \frac{1}{N} \log \int_{\Theta_{r,\epsilon}} e^{\ell_N(\theta)} d\Pi(\theta) &\le -\frac{1}{2N}\sum_{i=1}^N\varepsilon_i^2 - \frac{w_-(r,\epsilon) }{2N}\sum_{i=1}^N g^2(X_i) + \frac{\sqrt{w_+(r,\epsilon)}}{N} \big|\sum_{i=1}^N \varepsilon_i g(X_i)\big| + \frac{\log \Pi(\Theta_{r,\epsilon})}{N} 
\end{align*}
and 
\begin{align*}
 \frac{1}{N} \log \int_{\Theta_{r,\epsilon}} e^{\ell_N(\theta)} d\Pi(\theta) \ge-\frac{1}{2N}\sum_{i=1}^N\varepsilon_i^2 - \frac{w_+(r,\epsilon) }{2N}\sum_{i=1}^N g^2(X_i) - \frac{\sqrt{w_+(r,\epsilon)}}{N} \big|\sum_{i=1}^N \varepsilon_i g(X_i)\big|+ \frac{\log \Pi(\Theta_{r,\epsilon})}{N} 
\end{align*}
We estimate $\sqrt {w_+(r,\epsilon)} \le \bar w(r,\epsilon) = \max(w_+(r,\epsilon), 1)$, and noting that $$\EE\varepsilon_i^2 = 1 = \EE g^2(X_i), ~~\EE\varepsilon_i g(X_i)=0$$ we can use Chebyshev's (or Bernstein's) inequality to construct an event of high probability such that the functional $\mathscr F_N$ from (\ref{fp}) is bounded as
\begin{align}\label{fup}
\mathscr F_N(r,\epsilon) &\le -\frac{1}{2} - \frac{w_-(r,\epsilon) }{2}+ \frac{\log \Pi(\Theta_{r,\epsilon})}{N} +\omega_N(s,\eta)
\end{align}
and 
\begin{align} \label{flow}
\mathscr F_N(r,\epsilon) & \ge-\frac{1}{2} - \frac{w_+(r,\epsilon) }{2} + \frac{\log \Pi(\Theta_{r,\epsilon})}{N} +\omega'_N(r,\epsilon),
\end{align}
where
\begin{equation}\label{omegan}
\omega_N(r,\epsilon) = \mcO\Big(1+\frac{w_-(r,\epsilon) + \bar w(r,\epsilon)}{\sqrt N}\Big), ~\omega_N'(s) = \mcO\Big(1+\frac{w_+(r,\epsilon) + \bar w(r,\epsilon)}{\sqrt N}\Big),
\end{equation}
and this is uniform in all $(r,\epsilon)$ since $\|w\|_\infty \le W$ is bounded. Using the above with $(r,\epsilon)$ chosen as $(s,\eta)$ and $(\sigma,\eps)$ respectively, we then obtain
\begin{align}\label{intest}
\frac{1}{N}\log \frac{\Pi(\Theta_{s,\eta}|Z^{(N)})}{\Pi(\Theta_{\sigma,\eps}|Z^{(N)})}  &= \mathscr F_N(s,\eta) - \mathscr F_N(\sigma,\eps) \notag \\
&\le  - \frac{w_-(s,\eta)}{2} + \frac{\log \Pi(\Theta_{s,\eta})}{N}   + \frac{w_+(\sigma,\eps)}{2} -  \frac{\log \Pi(\Theta_{\sigma,\eps})}{N} +\omega_N(s,\eta) - \omega_N'(\sigma,\eps) \notag \\
& = - \frac{w_-(s,\eta)}{2} + \frac{w_+(\sigma,\eps)}{2} + \frac{1}{N}\log \frac{\Pi(\Theta_{s,\eta})}{\Pi(\Theta_{\sigma,\eps})} +\omega_N(s,\eta) - \omega_N'(\sigma,\eps)
\end{align}
with high $P_{\theta_0}^N$-probability. The result now follows from the hypothesis (\ref{hypo}) and since the terms $\omega_N, \omega_N'$ are $\smallO(1)$.

\smallskip

\textbf{Proof of part ii).} The proof of part \textbf{ii)} follows from an obvious modification of the previous arguments.

\end{proof}

In the case where $\Pi(\Theta_{s,\eta})$ and $\Pi(\Theta_{\sigma,\eps})$ are comparable (so that the l.h.s.~in (\ref{hypo}) converges to zero), a local optimum at $\sigma$ in the function $w$ away from zero can verify the last inequality for `intermediate' $s$ such that $w(s)-w(\sigma)\le-2\nu$. This can be used to give computational hardness results for MCMC of multi-modal distributions. But we are interested in the more challenging case of `unimodal' examples $w$ from (\ref{was}). Before we turn to this, let us point out what can be said about the hitting times of Markov chains if the conclusion (\ref{bdv}) of Proposition~\ref{nubd} holds.

\subsection{Bounds for Markov chain hitting times}\label{hit-sec}

\subsubsection{Hitting time bounds for intermediate sets \texorpdfstring{$\Theta_{s,\eta}$}{}}

In \eqref{bdv} we can think of $\Theta_{\sigma,\eps}$ as the `initialisation region' (further away from $\theta_0$) and $\Theta_{s,\eta}$ for intermediate $s$ is the `barrier' before we get close to $\theta_0=0$.
The last bound permits the following classic hitting time argument, taken from \cite{arous2020free}, see also \cite{J03}.

\begin{proposition}\label{hit}
Consider any Markov chain $(\vartheta_k:k \in \mathbb N)$ with invariant measure $\mu=\Pi(\cdot|Z^{(N)})$ for which (\ref{bdv}) holds. For constants $\eta<\sigma-s$, suppose $\vartheta_0$ is started in $\Theta_{\sigma,\eps}$, $\mu(\Theta_{\sigma,\eps})>0$, drawn from the conditional distribution $\mu(\cdot|\Theta_{\sigma,\eps})$, and denote by $\tau_s$ the hitting time of the Markov chain onto $\Theta_{s,\eta}$, that is, the number $\tau_s$ of iterates required until $\vartheta_{k}$ visits the set $\Theta_{s,\eta}$. 
Then $$\Pr(\tau_s \le K) \le K e^{-\nu N},~~K>0.$$ 
Similarly, on the event where (\ref{L-ratio}) holds we have that 
\[ \Pr(\tau_{B_{L}^c} \le K) \le Ke^{-\nu N},~~K>0. \]
\end{proposition}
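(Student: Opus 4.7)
The plan is to run the classical Markov chain bottleneck argument (in the spirit of \cite{J03, arous2020free}), which reduces the hitting time bound to the ratio inequality \eqref{bdv} already in hand via two short steps: invariance of $\mu$ under the chain, and a change-of-initialisation bound.

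First I would record the geometric fact that the assumption $\eta < \sigma - s$ forces $\Theta_{s,\eta} \cap \Theta_{\sigma,\eps} = \emptyset$, since any $\theta \in \Theta_{s,\eta}$ satisfies $\|\theta\|_{\R^D} < s+\eta < \sigma$. In particular the initialisation $\vartheta_0 \sim \mu(\cdot\,|\,\Theta_{\sigma,\eps})$ lies outside $\Theta_{s,\eta}$, so $\tau_s \ge 1$ almost surely. This disjointness is what justifies union-bounding over $k=1,\ldots,K$ in the next step without double-counting the initial state.

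Next I would control the hitting probability under the unconditional stationary initialisation. Since $\mu$ is invariant for the Markov kernel, starting from $\vartheta_0 \sim \mu$ produces a strictly stationary chain with $\vartheta_k \sim \mu$ for every $k$, so a union bound yields
\[ \Pr_\mu(\tau_s \le K) \le \sum_{k=1}^K \Pr_\mu(\vartheta_k \in \Theta_{s,\eta}) = K\,\mu(\Theta_{s,\eta}). \]
To transfer this estimate to the conditional initial law, I would use the elementary bound
\[ \Pr_{\mu(\cdot|\Theta_{\sigma,\eps})}(E) \,=\, \frac{\Pr_\mu\!\bigl(E \cap \{\vartheta_0 \in \Theta_{\sigma,\eps}\}\bigr)}{\mu(\Theta_{\sigma,\eps})} \,\le\, \frac{\Pr_\mu(E)}{\mu(\Theta_{\sigma,\eps})}, \]
valid for any path event $E$, applied with $E = \{\tau_s \le K\}$. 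Combining the two displays with the ratio bound \eqref{bdv} gives
\[ \Pr(\tau_s \le K) \,\le\, \frac{K\,\mu(\Theta_{s,\eta})}{\mu(\Theta_{\sigma,\eps})} \,\le\, K\,e^{-\nu N}, \]
as required. The second statement is proved in exactly the same way, replacing $\Theta_{s,\eta}$ by $B_L^c$ throughout and invoking \eqref{L-ratio} in place of \eqref{bdv}; disjointness of $B_L^c$ from $\Theta_{\sigma,\eps}$ is automatic since $L$ is chosen above the outer radius of $\Theta_{\sigma,\eps}$.

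The argument is essentially mechanical and I do not anticipate a serious obstacle; the only delicate point is keeping the direction of the conditioning straight, so that the factor $1/\mu(\Theta_{\sigma,\eps})$ appears on the right-hand side, which is precisely what \eqref{bdv} (and respectively \eqref{L-ratio}) was tailored to absorb. Note also that the bound is vacuous for $K\ge e^{\nu N}$, which matches the intended interpretation that exponential time is required.
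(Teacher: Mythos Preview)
Your argument is correct and essentially identical to the paper's: both write $\Pr(\tau_s\le K)$ as a conditional probability with denominator $\mu(\Theta_{\sigma,\eps})$, bound the numerator by $\sum_{k\le K}\Pr(\vartheta_k\in\Theta_{s,\eta})=K\mu(\Theta_{s,\eta})$ via invariance and a union bound, and conclude with \eqref{bdv}. The only cosmetic difference is that you split the change-of-initialisation step off as a separate display, whereas the paper carries it in one chain of inequalities.
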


\begin{proof}[Proof of Proposition~\ref{hit} --] We have
\begin{align*}
\Pr(\tau_s \le K) &= \Pr (\vartheta_k \in \Theta_{s,\eta} \text{ for some } 1 \le k \le K|\vartheta_0 \in \Theta_{\sigma,\eps}) \\
&= \frac{\Pr(\vartheta_0 \in \Theta_{\sigma,\eps}, \vartheta_k \in \Theta_{s,\eta} \text{ for some } 1 \le k \le K)}{\mu(\Theta_{\sigma,\eps})} \le \frac{\sum_{k \le K} \Pr(\vartheta_k \in \Theta_{s,\eta})}{\mu(\Theta_{\sigma,\eps})} \\
&\le K \frac{\mu(\Theta_{s,\eta})}{\mu(\Theta_{\sigma,\eps})} \le K e^{-\nu N}.
\end{align*}
The second claim is proved analogously.
\end{proof}

The last proposition holds `on average' for initialisers $\vartheta_0 \sim \mu(\cdot|\Theta_{\sigma,\eps})$, and since $\Pr = \EE_{\mu(\cdot|\Theta_{\sigma,\eps})} \Pr_{\vartheta_0}$ where $\Pr_{\vartheta_0}$ is the law of the Markov chain started at $\vartheta_0$, the hitting time inequality holds at least for one point in $\Theta_{\sigma,\eps}$ since $\inf_{\vartheta_0} \Pr_{\vartheta_0} \le \EE_{\mu(\cdot|\Theta_{\sigma,\eps})} \Pr_{\vartheta_0}$.

\subsubsection{Reducing hitting times for \texorpdfstring{$B_{s}$}{} to ones for \texorpdfstring{$\Theta_{s,\eta}$}{}}\label{reduction}

We now reduce part \textbf{iv)} of Theorems~\ref{a} and \ref{b},  i.e.\ bounds on the hitting time of the region $B_s$ in which the posterior contracts, to a bound for the hitting time $\tau_s$ for the annulus $\Theta_{s,\eta}$, which is controlled in Proposition \ref{hit}. To this end, in the case of Theorem \ref{a}, we suppose that Propositions \ref{nubd} and \ref{hit} are verified with $\nu=\sigma=1$, some $\eps>0$ and $L,s,\eta$ as in the theorem, and in the case of Theorem \ref{b}, we assume the same with choice $\sigma=N^{-b}$ and $\nu>0$ given after (\ref{bal2a}) below. 
For $c_0$ from Assumption \ref{ass:step-eta}, define the events
\[ A_N := \{\forall k\le e^{(\nu \wedge c_0)N/2}: \|\vartheta_{k+1}- \vartheta_k \|_{\R^D} \le \eta/2 \}. \]
We can then estimate, using Assumption \ref{ass:step-eta}, that on the frequentist event on which Proposition \ref{hit} holds (which we apply with $K= e^{(\nu \wedge c_0)N/2}\le e^{\nu N/2}$), under the probability law of the Markov chain we have
\begin{align*}
	\Pr (\tau_{B_s}\le e^{(\nu \wedge c_0 )N/2} )&\le \Pr(\tau_{B_s}\le e^{(\nu \wedge c_0)N/2},~A_N) +\Pr(A_N^c )\\
	&\le \Pr(\tau_{s}\le e^{(\nu \wedge c_0)N/2}) +\Pr(A_N^c,~\tau_{B^c_{L}} >e^{(\nu \wedge c_0) N/2} )+ \Pr(\tau_{B^c_{L}} \le e^{(\nu\wedge c_0 )N/2} )\\
	&\le 2e^{-\nu N/2} +  e^{(\nu \wedge c_0)N/2} \sup_{\theta\in B_{L}} \mathcal P_N(\theta,\{\vartheta: \|\theta-\vartheta\|_{\R^D} \ge \eta/2 \})\\
	&\le  2e^{-(\nu \wedge c_0) N/2} + e^{(\nu \wedge c_0)N/2- c_0 N} \le 3e^{-(\nu\wedge c_0)N/2},
\end{align*}
where in the second inequality we have used that on the events $A_N$, the Markov chain $\vartheta_k$, when started in $\Theta_{1,\eps}$, needs to pass through $\Theta_{s,\eta}$ in order to reach $B_s$.

\subsection{Proof of Theorem \ref{b}}\label{b-pf}

In this section, we use the results derived in the previous part of Section~\ref{proofs} to finish the proof of Theorem~\ref{b}. 
Parts \textbf{i)} and \textbf{ii)} of the theorem follow from Proposition \ref{mono} and our choice of $w$ in (\ref{was}). 
We therefore concentrate on the proofs of part \textbf{iii)} and \textbf{iv)}. We start with proving a key lemma on small-ball estimates for truncated $\alpha$-regular Gaussian priors.

\subsubsection{Small ball estimates for \texorpdfstring{$\alpha$}{alpha}-regular priors}\label{subsubsec:def_matern_prior}

Let us first define precisely the notion of $\alpha$-regular Gaussian priors. For some fixed $\alpha > d/2$, the prior $\Pi$ arises as the truncated law $Law(\theta)$ of an $\alpha$-regular Gaussian process with RKHS $\mathcal H = H^\alpha$, a Sobolev space over some bounded domain/manifold $\mathcal X$, see e.g., Sec.~6.2.1~in \cite{N22} for details. Equivalently (under the Parseval isometry) we take a Gaussian Borel measure on the usual sequence space $\ell_2 \simeq L^2$ with RKHS equal to $$h^\alpha=\Big\{(\theta_i)_{i=1}^\infty: \sum_{i=1}^\infty i^{2(\alpha/d)} \theta_i^2 = \|\theta\|_{H^\alpha}^2 <\infty \Big\}, ~\alpha>d/2.$$ The prior $\Pi$ is the truncated law of $\theta_D=(\theta_1, \dots, \theta_D), D \in \mathbb N$.

\begin{lemma}\label{asb}
Fix $z>0,~\alpha>d/2$ and $\kappa>0$, and set
$$b = \frac{\alpha}{d} - \frac{1}{2}, ~\tau=\frac{1}{b}=\frac{2d}{2\alpha-d}.$$
Then if $D/N \simeq \kappa>0$, there exist constants $\bar c_0 > c_0$ (depending on $b,\kappa$) such that for all $N$ ($\ge N_0(z,b)$) large enough:
\begin{equation}\label{sbD}
c_0 (z+\kappa^{-\alpha/d}z^{-\tau/2})^{-\tau} \le  -\frac{1}{N} \log \Pi(\|\theta\|_{\R^D} \le zN^{-b}) \le \bar c_0  z^{-\tau}.
\end{equation}
\end{lemma}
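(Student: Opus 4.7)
The plan is to work with the diagonal representation of $\Pi$: picking the ONB of Laplacian eigenfunctions on $\mathcal Z$ and using Weyl's law, we may write $\theta_i = i^{-\alpha/d} Z_i$ with $Z_i \sim \mcN(0,1)$ i.i.d., so that $\|\theta\|_{\R^D}^2 = S := \sum_{i=1}^D i^{-2\alpha/d} Z_i^2$, and the task reduces to estimating $\Pr(S \le r^2)$ with $r = zN^{-b}$. Both directions will be handled by the closed-form Laplace transform $\EE e^{-\lambda S} = \prod_{i=1}^D (1 + 2\lambda i^{-2\alpha/d})^{-1/2}$ via Chernoff-type arguments.

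For the lower bound on $-(1/N)\log\Pi$ (i.e., the upper bound on $\Pr(S \le r^2)$), Markov's inequality yields for every $\lambda>0$
\[ -\log \Pr(S \le r^2) \ge -\lambda r^2 + \tfrac12 \sum_{i=1}^D \log(1+2\lambda i^{-2\alpha/d}) \ge -\lambda r^2 + \tfrac{\log 2}{2}\min\bigl((2\lambda)^{d/(2\alpha)},\,D\bigr), \]
using $\log(1+x)\ge(\log 2)\mathbf 1_{\{x\ge 1\}}$. I split into two regimes. \emph{(a)} If $z$ is large enough that $z \ge c\kappa^{-b}$, the unconstrained optimum $\lambda^\ast \asymp r^{-4\alpha/(2\alpha-d)}$ satisfies $(2\lambda^\ast)^{d/(2\alpha)} \le D$ and yields $-(1/N)\log \Pr \ge c'z^{-\tau}$ via the identity $r^{-\tau} = z^{-\tau}N$ (equivalent to $b\tau=1$). \emph{(b)} If $z < c\kappa^{-b}$ the truncation is binding; choosing $\lambda = D^{2\alpha/d}/2$ and using $2\alpha/d = 2b+1$ gives $-(1/N)\log\Pr \ge \tfrac12[\kappa\log 2 - \kappa^{2\alpha/d}z^2] \ge c''\kappa$ once $z$ is sufficiently small. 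Both bounds then dominate the interpolated expression $c_0(z+\kappa^{-\alpha/d}z^{-\tau/2})^{-\tau}$, because the latter is at most $z^{-\tau}$ and at most $\kappa^{\alpha\tau/d}z^{\tau^2/2}$ elementarily, and the identity $\alpha\tau/d - b\tau^2/2 = 1$ pins down the common value $\Theta(\kappa)$ at the crossover $z \sim \kappa^{-b}$.

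For the upper bound on $-(1/N)\log\Pi$, observe that $\|\theta\|_{\R^D}^2 \le S_\infty := \sum_{i=1}^\infty i^{-2\alpha/d}Z_i^2$ pointwise, so $\Pr(S\le r^2) \ge \Pr(S_\infty \le r^2)$, and it suffices to bound the small-ball probability of the untruncated Gaussian on $\ell_2$. I would use the Laplace identity $\EE e^{-\lambda S_\infty}=\lambda\int_0^\infty e^{-\lambda t}\Pr(S_\infty\le t)\,dt \le \Pr(S_\infty\le r^2)+e^{-\lambda r^2}$ and pick $\lambda$ so that $\EE e^{-\lambda S_\infty} \ge 2 e^{-\lambda r^2}$; this yields $-\log\Pr(S_\infty\le r^2) \le \tfrac12 \sum_{i\ge 1}\log(1+2\lambda i^{-2\alpha/d}) + \log 2$. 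With $\lambda \asymp r^{-4\alpha/(2\alpha-d)}$ as in (a), the convergent infinite sum is $O(\lambda^{d/(2\alpha)}) = O(r^{-\tau}) = O(z^{-\tau}N)$, delivering the stated $\bar c_0 z^{-\tau}$.

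The main obstacle is the bookkeeping around the lower bound: verifying that the single expression $(z+\kappa^{-\alpha/d}z^{-\tau/2})^{-\tau}$ simultaneously captures the genuine small-ball rate $z^{-\tau}$ for $z \ge \kappa^{-b}$ and the truncation-saturated rate $\sim\kappa$ for $z < \kappa^{-b}$. This reduces to the three algebraic identities $b\tau = 1$, $2\alpha/d = 2b+1$ and $\alpha\tau/d - b\tau^2/2 = 1$, all immediate from $b = \alpha/d - 1/2$ and $\tau = 1/b$, together with a smallness hypothesis on $z$ that is absorbed into the threshold $N_0(z,b)$ in the statement.
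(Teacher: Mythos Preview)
Your proposal is correct and takes a genuinely different route from the paper. The paper invokes general Gaussian-measure machinery: the two-sided small-ball asymptotics for the untruncated prior are read off from the Li--Linde correspondence between metric entropy of the RKHS unit ball and small-deviation probabilities, and the lower bound on $-\frac{1}{N}\log\Pi(\|\theta\|_{\R^D}\le zN^{-b})$ is obtained via the Gaussian isoperimetric (Borell) inequality, which furnishes a decomposition $\theta=\theta_1+\theta_2$ with $\|\theta_1\|_{L^2}\le zN^{-b}$ and $\|\theta_2\|_{H^\alpha}\lesssim z^{-\tau/2}\sqrt N$ on a set of overwhelming probability; the truncation error $\|\theta-\theta_D\|_{L^2}\lesssim D^{-\alpha/d}\|\theta_2\|_{H^\alpha}\simeq \kappa^{-\alpha/d}z^{-\tau/2}N^{-b}$ then \emph{produces} the correction term in the interpolant. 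Your approach is more elementary and fully self-contained: both inequalities come from Chernoff-type manipulations of the explicit product Laplace transform, with no appeal to isoperimetry or abstract small-ball theory. The price is that the particular form $(z+\kappa^{-\alpha/d}z^{-\tau/2})^{-\tau}$ must be verified post-hoc by matching the two regimes for $z$ rather than emerging structurally from a projection estimate. One minor cleanup: your case (b) bound $\tfrac12[\kappa\log 2-\kappa^{2\alpha/d}z^2]$ degenerates near the crossover, but case (b) is in fact dispensable --- case (a) already applies whenever $z\ge c_a\kappa^{-b}$ with $c_a=\sqrt{d\log 2/(2\alpha)}<1$, and for smaller $z$ monotonicity of the small-ball probability in $z$, together with the case (a) bound evaluated at $z=c_a\kappa^{-b}$, yields a lower bound of order $\kappa$, which dominates the target (the latter peaks at order $\kappa$ precisely at $z\sim\kappa^{-b}$, by your identity $\alpha\tau/d-b\tau^2/2=1$).
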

\begin{proof}[Proof of Lemma~\ref{asb} --]
Note first that the $L^2$-covering numbers of the ball $h(\alpha, B)$ of radius $B$ in $H^\alpha$ satisfy the well-known two-sided estimate
\begin{equation}\label{mett}
\log \mcN(\delta, \|\cdot\|_{L^2}, h(\alpha, B)) \simeq \Big(\frac{AB}{\delta} \Big)^{d/\alpha}, ~0<\delta<AB
\end{equation}
for equivalence constants in $\simeq$ depending only on $d, \alpha$. The upper bound is given in Proposition 6.1.1 in \cite{N22} and a lower bound can be found as well in the literature \cite{ET96} (by injecting $H^\alpha(\mathcal X_0)$ into $\tilde H^\alpha(\mathcal X)$ for some strict sub-domain $\mathcal X_0 \subset \mathcal X$, and using metric entropy lower bounds for the injection $H^\alpha(\mathcal X_0) \xhookrightarrow{} L^2(\mathcal X_0)$). 

Using the results about small deviation asymptotics for Gaussian measures in Banach space \cite{LL99} -- specifically Theorem 6.2.1 in \cite{N22} with $a=\frac{2d}{2\alpha-d}$ -- and assuming $\alpha>d/2$, this means that the concentration function of the 'untruncated prior' satisfies the two-sided estimate
\begin{equation}
-\log \Pi(\|\theta\|_{L^2} \le \gamma) \simeq \gamma^{-\frac{2d}{2\alpha-d}} = \gamma^{-\tau}, ~~ \gamma \to 0.
\end{equation}
Here, restricting to $\gamma \in (0,1)$, the two-sided equivalence constants depend only on $\alpha, d$. Setting 
\begin{equation}\label{bee}
\gamma = zN^{-b},~z>0,
\end{equation}
and noting that $b \tau=1$, we hence obtain that for some constants $ c_l,c_u>0$,
\begin{equation} \label{sb}
e^{-c_l z^{-\tau} N} \le \Pi(\|\theta\|_{L^2} \le zN^{-b}) \le e^{-c_u z^{-\tau} N},~~~ \text{any } z>0.
\end{equation}
We now show that as long as $D/N \approx \kappa>0$, one may use the above asymptotics to derive the desired small ball probabilities for the projected prior on $\R^D$.

We obviously have, by set inclusion and projection,
$$\Pi(\|\theta\|_{\R^D} \le zN^{-b}) \ge \Pi(\|\theta\|_{L^2} \le zN^{-b})$$ and hence it only remains to show the first inequality in eq.~\eqref{sbD}.
The Gaussian isoperimetric theorem (Theorem 2.6.12 in \cite{GN16}) and (\ref{sb}) imply that for $m\ge 4\sqrt {c_l}$ and some $c>0$, we have that (with $\Phi$ denoting the c.d.f. for $\mcN(0,1)$)
\begin{align*}
  \Pi\big(\theta = \theta_1 + \theta_2, \|\theta_1\|_{L^2}& \le z N^{-b}, \|\theta_2\|_{h^\alpha} \le m z^{-
\tau/2} \sqrt N \big) \\
&\ge \Phi\big(\Phi^{-1}\big(\Pi(\{\theta:\|\theta\|\le zN^{-b}\})\big)+mz^{-\tau/2}\sqrt N \big)\\
&\ge \Phi\big(-\sqrt{2c_l}z^{-\tau/2}\sqrt N+mz^{-\tau/2}\sqrt N \big)
\ge 1 - e^{-cz^{-\tau}N}
\end{align*}
(see also the proof of Lemma 5.17 in \cite{MNP21b} for a similar calculation). Then if the event in the last probability is denoted by $I$ we have 
\[\Pi(\|\theta_D\|_{\R^D} \le z N^{-b}) \le \Pi(\|\theta_D\|_{\R^D} \le z N^{-b}, I) +  e^{-cz^{-\tau}N}.\]
On $I$, if $D/N \to \kappa>0$ and by the usual tail estimate for vectors in $h^\alpha$, we have for some $c'>0$ the bound 
\[\|\theta-\theta_D\|_{L^2} \le \|\theta_1\|_{L^2} + c'D^{-\alpha/d} z^{-\tau/2}\sqrt N \leq zN^{-b}+ c'\kappa^{-\alpha/d}z^{-\tau/2} N^{-b}\] so that for any $z>0$,
\begin{align*}
\Pi(\|\theta_D\|_{\R^D} \le z N^{-b}) &\le \Pi(\|\theta\|_{L^2} \le zN^{-b} + \|\theta-\theta_D\|_{L^2}, I)+ e^{-cz^{-\tau}N} \\
&\le \Pi(\|\theta\|_{L^2} \le (2z+c'\kappa^{-\alpha/d}z^{-\tau/2}) N^{-b}) + e^{-cz^{-\tau}N} \\
&\le e^{-c_u (2z+c'\kappa^{-\alpha/d}z^{-\tau/2})^{-\tau}N}+ e^{-cz^{-\tau}N},
\end{align*}
and hence the lemma follows by appropriately choosing $c_0>0$.
\end{proof}

\smallskip

\begin{remark}\label{rescale} \normalfont For statistical consistency proofs in non-linear inverse problems, often \textit{rescaled} Gaussian priors are used to provide additional regularisation \cite{MNP21b, NW20, BN21}. For these priors a computation analogous to the previous lemma is valid: specifically if we rescale $\theta$ by $\sqrt N \delta_N$, where $\delta_N = N^{-\alpha/(2\alpha+d)}$ so that $\sqrt N \delta_N = N^{(d/2)/(2\alpha+d)} = N^{k}$, then we just take $N^{-\beta + k} = N^{-b}$ in the above small ball computation, that is $-b = -\beta+k$ or $b=\beta-k$, and the same bounds (as well as the proof to follow) apply.
\end{remark}

\subsubsection{Proof of Theorem \ref{b}, part \textbf{iv)}}

Lemma \ref{asb} and the hypotheses on $\eta$ immediately imply
$$ \Pi(\theta\in \Theta_{s,\eta})=\Pi\big(\|\theta\|_{\R^D} \in (s_bN^{-b}, s_bN^{-b}+\eta)\big) \le \Pi\big(\|\theta\|_{\R^D} \le 2s_bN^{-b}\big) \le e^{-c_0 N (2s_b+ \kappa^{-\alpha/d}(2s_b)^{-\tau/2})^{-\tau}}.$$
To lower bound $\Pi(\Theta_{N^{-b},(1+\eps)N^{-b}})$, we choose $\eps$ large enough such that
\[ \bar c_0(1+\eps)^{-\tau} < c_0(1+\kappa^{-\alpha/d})^{-\tau}, \]
which implies for all $N$ large enough that
\begin{align}\label{ann-lb}
\Pi\big(\|\theta\|_{\R^D} \in (N^{-b}, (1+\eps)N^{-b})\big) &= \Pi(\|\theta\|_{\R^D} \le (1+\eps)N^{-b}) - \Pi(\|\theta\|_{\R^D} \le N^{-b})) \notag \\
&\ge e^{-\bar c_0 (1+\eps)^{-\tau}N} - e^{-c_0(1+\kappa^{-\alpha/d})^{-\tau} N}\\
&\ge e^{-2\bar c_0 (1+\eps)^{-\tau} N}.\notag
\end{align}
Now, for $w$ from (\ref{was}), we choose
\begin{equation}\label{choices}
t=t_b N^{-b}, L=L_b N^{-b}, \rho \in (0,1], ~~  0<t_b<s_b<1/2<L_b<\infty,~~T = T_b N^{b},
\end{equation}
for $T_b, \rho,s_b,t_b$ fixed constants to be chosen, so that $\|w\|_\infty$ is bounded (uniformly in $N$) by a constant which depends only on $T_b, L_b, \rho$, whence (\ref{supbd}) holds. Now the key inequality (\ref{bal2}) with $s=s_bN^{-b}$ and with our choice of $\eta, \eps$, $\sigma = N^{-b}$ will be satisfied if
\begin{equation} \label{bal2a}
c_0 (2s_b+ \kappa^{-\alpha/d}(2s_b)^{-\tau/2})^{-\tau} \ge 2\bar c_0(1+\eps)^{-\tau} + 2\nu + \frac{\rho}{2}N^{-b}( 1+\eps -s_b).
\end{equation}
We define $\nu$ to equal to $1/3$ of the l.h.s. so that (\ref{bal2a}) will follow for the given $s_b,\kappa, \alpha, d$ by choosing $\eps$ large enough and whenever $N$ is large enough.

Finally, let us notice that with $L=C\sqrt N$ for some $C\ge 2 \EE[\|\theta\|_{\ell_2}]$, where $\theta$ is the infinite Gaussian vector with RKHS $h^\alpha$, we can deduce from Theorem 2.1.20 and Exercise 2.1.5 in \cite{GN16} that
\[ \Pr(\|\theta\|_{\R^D}\ge L)\le 2\exp(-c \, C^2N/2),~\text{ some }~c>0. \]
Thus, using also (\ref{ann-lb}), choosing $C$ large enough verifies (\ref{L-hypo}). Since (\ref{ann-lb}) and the a.s.~boundedness of $\sup_{\theta}|\ell_N(\theta)|$ for $\ell_N$ from (\ref{likemp}) imply that $\Pi(\Theta_{N^{-b}, (1+\varepsilon) N^{-b}}|Z^{(N)})>0$ a.s., Proposition \ref{nubd} and then also Proposition \ref{hit} apply for this prior, and the arguments from Section \ref{reduction} yield the desired result.

\subsubsection{Proof of Theorem \ref{b}, part \textbf{iii)}}

We finish the proof of the theorem by showing point $\textbf{iii)}$. We use the setting and choices from the previous subsection. Let us write $\mathbb G(A) = \int_A e^{\ell_N(\theta)}d\Pi(\theta)$ for any measurable set $A$. Recall the notation $B_{r} = \{\theta: \|\theta\|_{\R^D} \le r\}, r>0$. Repeating the argument leading to (\ref{flow}) with $B_{t/2}$ in place of $\Theta_{r,\epsilon}$, and using Lemma \ref{asb}, we have with high probability
$$\frac{1}{N} \log \mathbb G(B_{t/2}) \ge  -\frac{1}{2} - \frac{\sup_{r \le t_b N^{-b}/2} w(r)}{2} - \bar c_0\big(\frac{t_b}{2}\big)^{-\tau} + \omega_N'(t/2),$$ where $\omega_N'(t/2)=\mcO(\|w\|_\infty/\sqrt N)=\smallO(1)$ . Likewise, we also have
$$\frac{1}{N} \log \mathbb G(B_s^c) \le - \frac{1}{2} - \frac{\inf_{r \ge s_bN^{-b}} w(r)}{2} + \frac{1}{N}\log \Pi(B^c_s) +\omega''_N(s),$$ where $\omega''_N(s)=\mcO(\|w\|_\infty/\sqrt N)= \smallO(1)$. We can assume that $\mathbb G(B_s^c)>0$. Hence, since $\Pi(B^c_s)\to 1$ in view of Lemma \ref{asb},
\begin{align}
\frac{1}{N} \log \frac{\mathbb G(B_{t/2})}{\mathbb G (B_s^c)} &\ge -\frac{(Tt)^2}{2} - c_0\big(\frac{t_b}{2}\big)^{-\tau} + \frac{(Tt)^2 +(Tt/2) + \rho(s-t)}{2} + \frac{1}{N}\log \Pi(B^c_s) +\smallO(1) \notag \\
& \ge  \frac{T_bt_b}{4} - c_0\big(\frac{t_b}{2}\big)^{-\tau} +  \smallO(1). \label{t4}
\end{align}
Now, for $t_b<s_b$ fixed we can choose $T_b$ large enough such that the last quantity exceeds $1$ with high probability (in particular this retrospectively justifies the last $\smallO(1)$ as then $\|w\|_\infty=\mcO(1)$ for our choice of $T_b$). Therefore, again with high probability
\begin{equation}\label{ratioag}
\frac{\mathbb G(B_{t/2})}{\mathbb G (B_s^c)} \ge e^{N}\times (1+\smallO(1)).
\end{equation}
For $M_{t,s}=\{\theta: t/2<\|\theta\|_{\R^D} \le s \}$ this further implies that with high probability
$$\frac{\mathbb G(B_{t/2}) + \mathbb G(M_{t,s})}{\mathbb G(B_s^c)} \ge e^{ N} \times (1+\smallO(1)), $$
and then,
\begin{align*}
\Pi(B_{s}|Z^{(N)}) & = \frac{\mathbb G(B_{t/2}) + \mathbb G(M_{t,s})}{\mathbb G(B_{t/2})+ \mathbb G(M_{t,s}) + \mathbb G(B_s^c)}\\
& = \frac{\mathbb G(B_{t/2}) + \mathbb G(M_{t,s})}{(\mathbb G(B_{t/2}) +\mathbb G(M_{t,s})) \big(1+ \frac{\mathbb G(B_s^c)}{\mathbb G(B_{t/2}) +\mathbb G(M_{t,s})}\big)} \to 1,
\end{align*}
again with high probability, which is what we wanted to show.

\begin{remark}\normalfont \label{logcn}
If the map $w$ is globally convex, say $w(s)= Ts^2/2$ for all $s>0$, then the `small enough' choice of $s$ after (\ref{bal2a}) is still possible but then depends on the global coercivity constant $T$, which will prevent the previous contraction argument to work. So while the hitting time lower bound is still valid, we cannot conclude that we are never hitting regions of significant posterior probability. It is here where global log-concavity of the likelihood function helps, as it enforces a certain `uniform' spread of the posterior across its support via a global coercivity constant $T$. In contrast the above example of $w$ is not convex, rather it is very spiked on $(0,t/2)$ and then ``flattens out''.  
\end{remark}

\subsection{Proof of Theorem \ref{a}}\label{a-pf}

The proof of Theorem \ref{a} proceeds along the same lines as the one of Theorem~\ref{b}, with scaling $t,L,\rho,s,\eta$ constant in $N$, corresponding to $b=0$ in $N^{-b}$, and replacing the volumetric Lemma \ref{asb} by the following basic result.

\begin{lemma}
Let $\theta\sim \mcN(0,I_D/D)$. Let $a\in (0,1/2)$. Then for all $D\ge D_0(a)$ large enough,
\begin{equation}\label{pm}
- \frac{1}{D} \log \Pi(\|\theta\|_{\R^D} \le z) \ge \frac 12  \big(\frac{z^2}2-\log z-\frac 12\big),~~~ \text{any } z\in (0,1-a).
\end{equation}
\end{lemma}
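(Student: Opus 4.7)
The plan is to reduce the small-ball event to a one-dimensional chi-squared tail and control it by an exponential Markov (Chernoff) bound. Since $\theta \sim \mcN(0, I_D/D)$, the scalar $D\|\theta\|_{\R^D}^2$ is a sum of $D$ i.i.d.\ squared standard Gaussians, hence $\chi^2_D$-distributed, and the small-ball event $\{\|\theta\|_{\R^D} \le z\}$ becomes $\{D\|\theta\|_{\R^D}^2 \le Dz^2\}$.

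The main calculation is then Chernoff in the left-tail direction: for any $\lambda > 0$,
\[
\Pi\big(\|\theta\|_{\R^D} \le z\big) \;\le\; e^{\lambda Dz^2}\,\EE\big[e^{-\lambda D\|\theta\|_{\R^D}^2}\big] \;=\; e^{\lambda Dz^2}\,(1+2\lambda)^{-D/2},
\]
by the standard Gaussian Laplace transform $\EE[e^{-\lambda g^2}] = (1+2\lambda)^{-1/2}$ for $g \sim \mcN(0,1)$ and independence across coordinates. Taking $-\frac{1}{D}\log$ yields
\[
-\tfrac{1}{D}\log \Pi\big(\|\theta\|_{\R^D} \le z\big) \;\ge\; \tfrac{1}{2}\log(1+2\lambda) - \lambda z^2.
\]
I would then optimise over $\lambda \ge 0$: the critical point is $\lambda^\star = (1-z^2)/(2z^2)$, strictly positive precisely because $z < 1-a < 1$. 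Substituting gives
\[
-\tfrac{1}{D}\log \Pi\big(\|\theta\|_{\R^D} \le z\big) \;\ge\; -\log z + \frac{z^2}{2} - \frac{1}{2},
\]
which is exactly \emph{twice} the target bound. Since the right-hand side is positive on $(0,1)$ and, by monotonicity of $x \mapsto x/2 - (\log x)/2 - 1/2$ on $(0,1)$, bounded below by a positive constant on $[0,1-a]$, halving it produces the claim.

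I do not foresee any serious obstacle. The only detail worth checking is that the optimiser $\lambda^\star$ lies in the admissible range $\lambda > -1/2$ of the Gaussian Laplace transform, which is automatic from $z \in (0,1)$. The hypothesis $D \ge D_0(a)$ is in fact superfluous for this Chernoff argument; it would be relevant in an alternative density-based proof where one integrates the $\chi^2_D$ density on $[0,Dz^2]$ and controls $\Gamma(D/2+1)$ via Stirling, picking up an $O((\log D)/D)$ correction that must be absorbed into the constant $1/2$ prefactor. The exponential-moment route bypasses this and works uniformly in $D \ge 1$.
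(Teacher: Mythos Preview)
Your Chernoff argument is correct and in fact yields the sharper bound
\[
-\tfrac{1}{D}\log \Pi(\|\theta\|_{\R^D}\le z)\;\ge\; \tfrac{z^2}{2}-\log z-\tfrac12
\]
for \emph{every} $D\ge 1$ and $z\in(0,1)$; since the right-hand side is strictly positive on $(0,1)$, halving it is immediate and the restriction $D\ge D_0(a)$ is indeed not needed. (Your remark about monotonicity and a uniform positive lower bound on $[0,1-a]$ is harmless but unnecessary here: positivity alone suffices for the halving step.)

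This is a genuinely different route from the paper's proof. The paper works in spherical coordinates, writing
\[
\Pi(\|\theta\|\le z)=\frac{\mathrm{vol}(\bbS^{D-1})}{(2\pi/D)^{D/2}}\int_0^z e^{-Dr^2/2+(D-1)\log r}\,dr,
\]
bounds the integrand by its value at $r=z$ using monotonicity of $r\mapsto -r^2/2+\log r+1/2$ on $(0,1)$, and controls the prefactor via Stirling for $\Gamma(D/2)$. This produces an additive $\mcO((\log D)/D)$ error which is then absorbed into the factor $1/2$ by taking $D\ge D_0(a)$ large enough so that the error is dominated by $|f(1-a)|>0$. Your exponential-moment approach bypasses both the Stirling asymptotics and the need for $D_0(a)$, is shorter, and delivers the exact large-deviations rate rather than half of it. The paper's approach has the mild advantage of making the connection to the radial density (and hence to the Franz--Parisi style decomposition used elsewhere) more transparent, but for this lemma your argument is strictly cleaner. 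You also correctly anticipated the paper's method in your closing paragraph.
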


A proof of (\ref{pm}) is sketched in Appendix \ref{iso-small-ball}.
As a consequence of the previous lemma
\[ \frac 1N \log \Pi(\Theta_{s,\eta})\le \frac 1N\log \Pi(B_{2s})\le \frac \kappa 2 (\log 2s -2s^2+\frac 12). \] Moreover, to lower bound $\Pi(\Theta_{2/3,\eps})$, we choose $\eps>2/3$. Then, using Theorem 2.5.7 in \cite{GN16} as well as $\EE \|\theta\|\le \EE (\|\theta\|^2)^{1/2}=1$, and then also (\ref{pm}) with $z=2/3$, we obtain that 
\begin{align*}
    \Pi(\Theta_{2/3,\eps}) &\ge \Pi(|\|\theta\|_{\R^D}-1 | \le 1/3 )\\
    &\ge 1- \Pi(\|\theta\|_{\R^D} \ge \EE \|\theta\|_{\R^D} +1/3) -\Pi(\|\theta\|_{\R^D} \le  2/3 )\\
    &\ge 1- \exp(-D/18) - \exp(-cD),
\end{align*}
for some fixed constant $c>0$ given by (\ref{pm}), whence $\Pi(\Theta_{2/3,\eps}) \to 1$ and also $N^{-1}\log \Pi(\Theta_{2/3,\eps})\to 0$. Therefore, the key inequality (\ref{bal2}) with $\sigma=2/3,~\nu=1$ holds whenever we choose $s=s_0$ small enough such that 
$$-\log 2s_0 > 2\kappa^{-1}\big[  2 + \frac{\rho}{2}(s_0-2/3-\eps) \big] + 2s_0^2+ \frac{1}{2}.$$

The rest of the detailed derivations follow the same pattern as in the proof of Theorem \ref{b} and are left to the reader, including verification of (\ref{L-hypo}) via an application of Theorem 2.5.7 in \cite{GN16}. In particular, the proof of part \textbf{iii)} follows the same arguments (suppressing the $N^{-b}$ scaling everywhere) as in Theorem \ref{b}.

\subsection{Proofs for Section \ref{cst}}\label{algo-pfs}

In this section, we prove the results of Section~\ref{cst} which detail the consequences of the general Theorems~\ref{a} and \ref{b} for practical MCMC algorithms.

\subsubsection{Proofs for pCN}

Theorem \ref{pcn} is proved by verifying the Assumption \ref{ass:step-eta} for suitable choices of $\eta$ and $L$, and for $c_0=\kappa/2 >0$.

\begin{lemma}\label{pcn-steps}
	Let $\mathcal P_N$ denote the transition kernel of pCN from (\ref{pcn-algo}) with parameter $\beta>0$.
	
	\textbf{i)} Suppose $\Pi=\mcN(0,I_D/D)$ as in Theorem \ref{a}, and let $L,\eta>0$. Then for all $\beta \le \min \{1/2, \eta/4L, \eta^2/64 \}$ and all $D\ge 1$, we have (with $P_0^N$-probability 1)
	\[ \sup_{\theta\in B_L}  \mathcal P_N(\theta,\{\vartheta: \| \theta-\vartheta \|_{\R^D}\ge \eta/2 \})\le e^{-D/2}.  \]
	
	\textbf{ii)} Suppose $\Pi=\mcN(0,\Sigma_\alpha)$ as in Theorem \ref{b}, and let $L,\eta>0$. There exists some $c>0$ such that for all $\beta \le \min \{1/2, \eta/4L, c\eta^2/D\}$ and all $D\ge 1$, we have (with $P_0^N$-probability 1)
	\[ \sup_{\theta\in B_L}  \mathcal P_N(\theta,\{\vartheta: \| \theta-\vartheta \|_{\R^D}\ge \eta/2 \})\le e^{-D/2}.  \]
	
\end{lemma}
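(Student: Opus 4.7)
\medskip

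\textbf{Proof plan for Lemma \ref{pcn-steps}.} The plan is to bound the proposal distance deterministically in terms of $L$, $\beta$ and $\|\xi_k\|$, and then use Gaussian concentration to control the (random) contribution of the noise term $\|\xi_k\|$. Observe first that whatever the outcome of the accept-reject step, one has $\|\vartheta_k-\vartheta_{k-1}\|_{\R^D} \in \{0,\|p_k-\vartheta_{k-1}\|_{\R^D}\}$, so it suffices to bound
\[ \sup_{\theta \in B_L}\mathcal{P}_N\bigl(\theta,\{\vartheta:\|\theta-\vartheta\|_{\R^D}\ge \eta/2\}\bigr) \le \sup_{\theta\in B_L}\Pr_{\xi}\bigl(\|p(\theta,\xi)-\theta\|_{\R^D}\ge \eta/2\bigr), \]
where $p(\theta,\xi)=\sqrt{1-\beta}\,\theta+\sqrt{\beta}\,\xi$ and $\xi\sim \mathcal N(0,\mathcal C)$. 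Writing $p(\theta,\xi)-\theta=(\sqrt{1-\beta}-1)\theta+\sqrt{\beta}\,\xi$ and using the elementary estimate $1-\sqrt{1-\beta}\le \beta$ (valid for $\beta\in[0,1/2]$), the triangle inequality gives, for all $\theta\in B_L$,
\[ \|p(\theta,\xi)-\theta\|_{\R^D} \le \beta L + \sqrt{\beta}\,\|\xi\|_{\R^D}. \]
Under the hypothesis $\beta\le \eta/(4L)$ the first term is at most $\eta/4$, so the conclusion reduces to the probabilistic statement
\begin{equation}\label{xi-tail}
    \Pr\bigl(\sqrt{\beta}\,\|\xi\|_{\R^D}\ge \eta/4\bigr)\le e^{-D/2}.
\end{equation}

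\medskip

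To establish \eqref{xi-tail} I will invoke the Borell-TIS inequality (Theorem~2.5.7 in \cite{GN16}) applied to the $1$-Lipschitz functional $x\mapsto \|x\|_{\R^D}$: for $\xi\sim\mathcal N(0,\mathcal C)$ on $\R^D$, writing $\sigma^2 := \|\mathcal C\|_{\mathrm{op}}$ for the largest eigenvalue of $\mathcal C$,
\[ \Pr\bigl(\|\xi\|_{\R^D}\ge \EE\|\xi\|_{\R^D} + t\bigr) \le \exp(-t^2/(2\sigma^2)),\qquad t>0. \]
Coupled with $\EE\|\xi\|_{\R^D}\le (\EE\|\xi\|_{\R^D}^2)^{1/2}=(\mathrm{tr}\,\mathcal C)^{1/2}$, this gives a tail bound of the form $\|\xi\|_{\R^D}\le (\mathrm{tr}\,\mathcal C)^{1/2}+\sigma\sqrt{D}$ with probability at least $1-e^{-D/2}$.

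\medskip

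In case \textbf{i)}, $\mathcal C=I_D/D$ gives $\sigma^2=1/D$ and $\mathrm{tr}\,\mathcal C=1$, so $\|\xi\|_{\R^D}\le 2$ with probability $\ge 1-e^{-D/2}$ for any $D\ge 1$; demanding $2\sqrt{\beta}\le \eta/4$, i.e.\ $\beta\le \eta^2/64$, yields \eqref{xi-tail}. In case \textbf{ii)}, $\mathcal C=\Sigma_\alpha$ has eigenvalues proportional to $i^{-2\alpha/d}$ in the standard basis, so $\sigma^2=\mathcal O(1)$ and, crucially, the hypothesis $\alpha>d/2$ makes the trace $\mathrm{tr}\,\Sigma_\alpha=\sum_{i=1}^D i^{-2\alpha/d}$ bounded by a constant uniformly in $D$. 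Hence there is a constant $C>0$ (depending only on $\alpha,d$) with $\|\xi\|_{\R^D}\le C\sqrt{D}$ with probability at least $1-e^{-D/2}$; requiring $C\sqrt{\beta D}\le \eta/4$ produces the condition $\beta\le c\eta^2/D$ with $c=1/(16C^2)$, as claimed.

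\medskip

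The steps are essentially bookkeeping: the only non-routine ingredient is identifying the correct scaling of $\sigma^2$ and $\mathrm{tr}\,\mathcal C$ in the two cases, and this is precisely what produces the qualitatively different thresholds $\eta^2/64$ versus $c\eta^2/D$ between parts \textbf{i)} and \textbf{ii)}. The $\alpha>d/2$ regularity assumption enters in an essential way in case \textbf{ii)} to ensure $\mathrm{tr}\,\Sigma_\alpha$ stays bounded as $D\to \infty$; without it the bound on $\EE\|\xi\|_{\R^D}$ would itself grow with $D$ and would force an even more restrictive step-size constraint.
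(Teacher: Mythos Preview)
Your proof is correct and follows essentially the same approach as the paper's: reduce to the proposal step via $\|\vartheta_k-\vartheta_{k-1}\|\le \|p_k-\vartheta_{k-1}\|$, use $|1-\sqrt{1-\beta}|\le \beta$ together with $\beta\le \eta/(4L)$ to absorb the deterministic drift, and apply the Borell--TIS inequality (Theorem~2.5.7 in \cite{GN16}) to the remaining Gaussian term. Your unified treatment via $\sigma^2=\|\mathcal C\|_{\mathrm{op}}$ and $\mathrm{tr}\,\mathcal C$ is slightly cleaner than the paper's, which handles the two cases separately (writing $\xi_k=g_k/\sqrt D$ explicitly in case~\textbf{i)} and invoking $\EE\|\xi_k\|_{\R^D}<\infty$ directly in case~\textbf{ii)}), but the substance is identical.
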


\begin{proof}[Proof of Lemma~\ref{pcn-steps} --] We begin with the proof of part \textbf{ii)}. Let $\|\vartheta_k\|_{\R^D}\le L$. Then using the definition of pCN and that $|\sqrt{1-\beta}-1|\le \beta$ for any $\beta \in [0,1]$ (Taylor expanding $\sqrt \cdot$ around $1$), we obtain that for any $\beta \le \min \{ 1/2, \eta/4L\}$,
	\begin{align*}
		\Pr( \| \vartheta_{k+1} -\vartheta_{k}\|_{\R^D}\ge \eta/ 2)&\le \Pr( \| p_{k+1} -\vartheta_{k}\|_{\R^D}\ge \eta/ 2) \\
		&\le \Pr( \|(\sqrt{1-\beta} -1) \vartheta_k \|_{\R^D} + \sqrt \beta \| \xi_k\|_{\R^D} \ge \eta/ 2)\\
		&\le \Pr( \| \xi_k \|_{\R^D} \ge (\eta/2 - \beta L )/ \sqrt \beta)\\
		&\le \Pr( \| \xi_k \|_{\R^D} \ge \frac {\eta}{4\sqrt \beta})\\
		&= \Pr( \| \xi_k \|_{\R^D}- \EE\| \xi_k \|_{\R^D} \ge \frac {\eta}{4\sqrt \beta}- \EE\| \xi_k \|_{\R^D} ).
	\end{align*}
	The variables $\xi_k$ are equal in law to a vector with components $(i^{-\alpha/d} g_i: i \le D)$ for $g_i$ iid $N(0,1)$ and hence $\EE\| \xi_k \|_{\R^D} \le (\EE\| \xi_k \|^2_{\R^D})^{1/2} \le C(\alpha,d)<\infty$ for $\alpha>d/2$. Then, for $\beta\le c \eta^2/D$ with some sufficiently small $c>0$ (noting that then also $\beta\le c \eta^2$), it holds that 
	\begin{align} \label{svenforgot}
	    \Pr( \| \vartheta_{k+1} -\vartheta_{k}\|_{\R^D}\ge \eta/ 2)&\le \Pr( \| \xi_k \|_{\R^D}- \EE\| \xi_k \|_{\R^D} \ge \frac{\eta}{8\sqrt \beta})\le \exp\big(-\frac{\eta^2}{64\beta}\big)\le \exp\big(-D/2\big),
	\end{align}
using, e.g. Theorem 2.5.8 in \cite{GN16} (and representing the $\|\cdot\|_{\R^D}$-norm by duality as a supremum). This completes the proof of part \textbf{ii)}.

	\smallskip 
	
    The proof of part \textbf{i)} is similar, albeit simpler, whence we leave some details to the reader. Arguing similarly as before, we obtain that for any $\beta \le \min \{ 1/2, \eta/64L\}$,
	\begin{align*}
		\Pr( \| \vartheta_{k+1} -\vartheta_{k}\|_{\R^D}\ge \eta/ 2)&\le \Pr( \| \xi_k \|_{\R^D} \ge (\eta/2 - \beta L )/ \sqrt \beta)\le \Pr( \| g_k \|_{\R^D} \ge \frac {\eta \sqrt D}{4\sqrt \beta}),
	\end{align*}
	where $g_k$ is a $\mcN(0,I_D)$ random variable. The latter probability is bounded by a standard deviation inequality for Gaussians, see, e.g.\ Theorem 2.5.7 in \cite{GN16}. Indeed, noting that $\EE \|\xi_k\|_{\R^D}\le (\EE [\|\xi_k\|_{\R^D}^2])^{1/2} = \sqrt D$, and that the one-dimensional variances satisfy $\EE \langle g_k, v\rangle^2 = \|v\|_{\R^D}^2=1 $ for any $\|v\|_{\R^D}=1$, we obtain
	\begin{align*}
		\Pr( \| g_k \|_{\R^D} \ge \frac {\eta \sqrt D}{4\sqrt \beta})& \le  \Pr\big( \big| \| \xi_k \|_{\R^D}- \EE \| \xi_k \|_{\R^D}\big| \ge \sqrt D (\frac \eta{4\sqrt \beta} - 1)\big)\\
		&\le \exp\Big( -\frac D2  (\frac{\eta}{4\sqrt \beta}-1)^2 \Big)\le \exp\Big( -\frac D2 \Big).
	\end{align*}
\end{proof}

\begin{proof}[Proof of Theorem \ref{pcn} --]
We begin with part \textbf{ii)}. Let $s_b$ be as in Theorem \ref{b} and set $\eta=\eta_N=s_bN^{-b}/2$ as well as $L=L_N C\sqrt N$, where $C$ is as in Theorem \ref{b}. With those choices, Lemma \ref{pcn-steps} \textbf{ii)} implies that Assumption \ref{ass:step-eta} is fulfilled with $c_0=\kappa/2$, so long as $\beta$ satisfies
\[ \beta \le \min\Big\{ \frac{1}{2}, \frac{s_bN^{-b}}{8C\sqrt N}, \frac{cs_b^2N^{-2b}}{4D} \Big\} \lesssim N^{-2b}D^{-1}\simeq N^{-1-2b}. \]
Hence, the desired result immediately follows from an application of Theorem \ref{b} \textbf{iv)}.

Part \textbf{i)} of Theorem \ref{pcn} similarly follows from verifying Assumption \ref{ass:step-eta} with $s\in (0,1/3)$, $L$ from Theorem \ref{a}, $\eta= s/2$ and for small enough $\beta <c_1$ (with $c_1$ determined by Lemma \ref{pcn-steps} \textbf{i)}), and subsequently applying Theorem \ref{a} \textbf{iv)}.
\end{proof}

\subsubsection{Proofs for MALA}

Theorem \ref{MALA} is proved by verifying the hypotheses of Theorems \ref{a} and \ref{b} respectively. A key difference between pCN and MALA is that the proposal kernels for MALA, not just its acceptance probabilities, depend on the data $Z^{(N)}$ itself. Again, we begin by examining part \textbf{ii)} which regards $N(0,\Sigma_\alpha)$ priors.

\smallskip 

\begin{proof}[Proof of Theorem \ref{MALA}, part ii)]
	We begin by deriving a bound for the gradient $\nabla \log \pi(\cdot | Z^{(N)})$. For Lebesgue-a.e. $\theta \in \R^D$, recalling that $\mathrm{vol}(\mathcal X)=1$, we have that
	\begin{align*}
	\mathrm E_{0}^N[\nabla \ell_N(\theta)] &= - \frac N2 w'(\|\theta\|)\frac{\theta}{\| \theta\| } \|g\|_{L^2}^2,\\
	\nabla \ell_N(\theta) &= \frac 12\sum_{i=1}^N \Big(\eps_i - \sqrt{w(\|\theta\|) }g(X_i)\Big)\frac{w'(\|\theta\|)}{2\sqrt{w(\|\theta\|)}}\frac{\theta}{\| \theta\| }g(X_i),\\
	&= \frac{w'(\|\theta\|)}{4\sqrt{w(\|\theta\|)}}\frac{\theta}{\| \theta\| }\sum_{i=1}^N \eps_i g(X_i) - \frac{w'(\|\theta\|)}{4} \frac{\theta}{\|\theta\|} \sum_{i=1}^Ng^2(X_i).
	\end{align*}
	For any $r\in (0,t/2)\cup (t/2,t)\cup (t,L)\cup (L,\infty)$, recalling the choices for $T,t,\rho$ in (\ref{choices}) we see that
	\begin{align}\label{w-est}
        \nonumber
	    \frac{w'(r)}{\sqrt{w(r)}} &=\frac{8Tr}{2Tr} 1_{(0,t/2)}(r) + \frac{T}{\sqrt{w(r)}} 1_{(t/2,t)}(r)+\frac{\rho}{\sqrt{w(r)}} 1_{(t,L)}(r), \\
	    &\lesssim  1 + N^{b} + 1,
	\end{align}
	where, to bound the second and third term, we used that $\sqrt {w(r)}\ge Tt = t_bT_b>0$ is bounded away from zero uniformly in $N$ on  $(t/2,\infty)$. Similarly, we have 
	\[  \| w'\|_{\infty} \le Tt/2 + T + \rho \lesssim N^b. \]
	Combining the above and using Chebyshev's inequality, it follows that
	\begin{align*}
	    \sup_{\theta\in \R^D} \|\nabla \ell_N(\theta)\|_{\R^D} &\lesssim N^b \Big( \Big|\sum_{i=1}^N \eps_i g(X_i)\Big| + \sum_{i=1}^Ng^2(X_i) \Big)\\
	    &\le N^b \Big(\|g\|_\infty \Big|\sum_{i=1}^N \eps_i\Big| + \sum_{i=1}^N(g^2(X_i)- \|g\|_{L^2}^2) + N\|g\|_{L^2}^2 \Big)\\
	    &\le N^b ( \mcO_{P}(\sqrt N)+\mcO(N) )\\
	    &= \mcO(N^{1+b}) + \smallO(N^{1+b}).
	\end{align*}
	Thus, the event
	\[A:=\{ \sup_{\theta\in \R^D} \|\nabla \ell_N(\theta)\|_{\R^D} \le C' N^{1+b} \}, \]
	for some large enough $C'>0$, has probability $P_0^N(A)\to 1$ as $N\to\infty$. We also verify that 
	\begin{align}\label{prior-grad}
	    \nabla \log \pi(\theta) =-\frac 12  \nabla \theta^T\Sigma_\alpha^{-1} \theta =-\Sigma_\alpha^{-1}\theta,
	\end{align}
    so that with $L=L_N=C\sqrt N$ (for $C$ as in Theorem \ref{b}) and recalling that $\Sigma_\alpha= \textnormal{diag}(1,\dots,D^{-2\alpha})$, we obtain
    \[ \sup_{\|\theta\|\le L} \|\nabla \log \pi(\theta) \|_{\R^D}= \sup _{\|\theta\|\le L}\|\Sigma_\alpha^{-1}\theta\|_{\R^D}\lesssim D^{2\alpha}\sqrt N\simeq N^{2\alpha+1}. \]
    
    Now, let $s_b,C$ be as in Theorem \ref{b} and set $\eta=\eta_N= \frac 12 s_bN^{-b}$ (note that this is a permissible choice in Theorem \ref{b}) as well as $L=L_N=C\sqrt N$. Furthermore, for a small enough constant $c>0$, let $\gamma \le cN^{-1-2\alpha-b}$. Then since $\alpha>b$, we also have that
    \begin{align}\label{beaurocracy}
        \gamma\lesssim \min \{ N^{-1-2\alpha-b}, N^{-1-2b}, N^{-1/2-b}\}.
    \end{align}
    Hence, on the event $A$ and whenever $\|\theta\|_{\R^D}\le L$, 
    \begin{align*}
    \gamma \| \nabla \log\pi(\vartheta_k | Z^{(N)}) \|_{\R^D} \lesssim  \gamma (N^{1+b}+ N^{1+2\alpha})\lesssim \eta.
    \end{align*}
    Using this, (\ref{beaurocracy}) and choosing $c>0$ small enough, conditional on the event $A$ the probability $\Pr(\cdot)$ under the Markov chain satisfies
	\begin{align*}
	\Pr\big(\|  p_{k+1}- \vartheta_k \|  \ge \eta /2 \big) &\le \Pr\big(	\gamma \| \nabla \log\pi(\vartheta_k | Z^{(N)}) \|_{\R^D} \ge \eta /4 \big) + \Pr\big(	\sqrt {2\gamma} \|\xi_{k+1} \|_{\R^D}\ge  \eta /4 \big) \\
	&\le \Pr\big(\|\xi_{k+1} \|_{\R^D} \ge  \frac{\eta}{4\sqrt {2\gamma} } \big)\\
	&\le \Pr\big(\|\xi_{k+1} \|_{\R^D}- \EE\|\xi_{k+1} \|_{\R^D} \ge  \sqrt N \big)\le \exp\big(-\frac{N}{2}\big),
	\end{align*}
 where the last inequality is proved as in (\ref{svenforgot}) above, using Theorem 2.5.8 in \cite{GN16}. Thus, Assumption \ref{ass:step-eta} is satisfied with $c_0=1$ and the proof is complete.
\end{proof}

\begin{proof}[Proof of Theorem \ref{MALA}, part i) --]
	The proof of part i) proceeds along the same lines, except that (\ref{w-est}) and (\ref{prior-grad}) are replaced with the bound
	\[ \Big\|\frac{w'}{\sqrt w}\Big\|_\infty + \|w'\|_\infty < C, \]
	for some constant $C$ independent of $N$, as well as the bound
	\[ \nabla \log \pi(\theta) =-\frac D2  \nabla \|\theta\|^2 =-D\theta,~~~~  \sup_{\|\theta\|\le L} \|\nabla \log \pi(\theta)\|_{\R^D} \simeq NL. \]
	Then letting $s\in (0,1/3)$ and $L>0$ be as in Theorem \ref{a}, and fixing an arbitrary $\eta \in (0,s/2)$, the above implies that for sufficiently small constant $c>0$ and for any $\gamma \le c/N$, it holds that
	\begin{align*}
	\Pr\big(	\|  p_{k+1}- \vartheta_k \|  \ge \eta /2 \big) &\le \Pr\big(	\gamma \| \nabla \log\pi(\vartheta_k | Z^{(N)}) \|_{\R^D} \ge \eta /4 \big) + \Pr\big(	\sqrt {2\gamma} \xi_{k+1} \ge  \eta /4 \big) \\
	&\le \Pr\big(	\xi_{k+1} \ge  \frac{\eta}{4\sqrt {2\gamma} } \big)\\
	&\le  \Pr\big(	\xi_{k+1} \ge  \frac{\eta \sqrt{\kappa D}}{4\sqrt{2c}} \big).
	\end{align*}
	Thus, choosing $c>0$ small enough and arguing exactly as in the last step of the proof of Theorem \ref{pcn}, part \textbf{i)}, Assumption \ref{ass:step-eta} is satisfied with $c_0=1$ and the proof is complete.
\end{proof}

\appendix

\section{Proofs of Section~\ref{sec:spiked_tensor}}\label{sec_app:proofs_spiked_tensor}

\begin{proof}[Proof of Corollary~\ref{cor:posterior_contraction} --]
    We fix $K = 1$ and place ourselves under the event of Proposition~\ref{prop:posterior_contraction_technical}, and we denote $s = s(\lambda)$ and $t = t(\lambda)$.
    We can decompose, since $\Pi[\shire_s|Y] = 1-\Pi[\doom_s|Y]$:
    \begin{equation*}
        \Pi(\doom_s|Y) = \Big[1 + \frac{\Pi(\shire_s|Y)}{\Pi(\doom_s|Y)}\Big]^{-1}. 
    \end{equation*}
    Moreover, $\Pi(\shire_s|Y)/\Pi(\doom_s|Y) = \Pi(\shire_s|Y)/[\Pi(\doom_t|Y) + \Pi(\mordor_{s,t}|Y)] \leq \Pi(\shire_s|Y)/\Pi(\doom_t|Y)$.
    Using Proposition~\ref{prop:posterior_contraction_technical}, for $n \geq n_0(\lambda,Y)$ we have 
    $\Pi(\shire_s|Y)/\Pi(\doom_s|Y) \leq \exp\{-n\}$. Therefore $\Pi[\doom_s|Y] \geq (1+\exp\{-n\})^{-1}$, which ends the proof.
\end{proof}

\noindent
\begin{proof}
The rest of this section is devoted to proving Proposition~\ref{prop:posterior_contraction_technical}.
We use a uniform bound on the injective norm of Gaussian tensors:
\begin{lemma}\label{Lemma:injectivenorm}
For all $p \geq 3$ there exists a constant $C_p$, such that:
\begin{equation}\label{eq:injectivenorm}
\limsup_{n \to \infty} \Big\{n^{-1/2}\max_{x\in \bbS^{n-1}} | \langle x^{\otimes p},Z \rangle | \Big\}\leq C_p, \hspace{0.5cm} \textrm{almost surely.}
\end{equation}
\end{lemma}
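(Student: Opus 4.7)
The plan is a classical $\varepsilon$-net argument followed by Borel--Cantelli, standard in the Gaussian tensor / random matrix literature.

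First, I would pass from $\sup_{x \in \bbS^{n-1}} |\langle x^{\otimes p}, Z\rangle|$ to the (larger) \emph{injective norm}
\begin{equation*}
\|Z\|_{\mathrm{inj}} := \sup_{(x_1, \ldots, x_p) \in (\bbS^{n-1})^p} |\langle x_1 \otimes \cdots \otimes x_p, Z\rangle|,
\end{equation*}
since any bound on $\|Z\|_{\mathrm{inj}}$ transfers verbatim to the rank-one restriction $x_1=\cdots=x_p=x$. For any fixed tuple $(y_1,\ldots,y_p)$ of unit vectors, the random variable $\langle y_1 \otimes \cdots \otimes y_p, Z\rangle$ is $\mcN(0,1)$, so one-point tail bounds for the injective norm are immediate; the work is to pass from a net to the full product of spheres.

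Second, fix an $\varepsilon$-net $\mathcal{N}_\varepsilon \subset \bbS^{n-1}$ with $|\mathcal{N}_\varepsilon|\le (3/\varepsilon)^n$ (standard volumetric construction). Given a near-maximiser $(x_1,\ldots,x_p)$ of $\|Z\|_{\mathrm{inj}}$ and nearest net points $(y_1,\ldots,y_p)$, I would use the telescoping identity
\begin{equation*}
x_1 \otimes \cdots \otimes x_p - y_1 \otimes \cdots \otimes y_p = \sum_{k=1}^p x_1 \otimes \cdots \otimes x_{k-1} \otimes (x_k - y_k) \otimes y_{k+1} \otimes \cdots \otimes y_p.
\end{equation*}
Each summand is a rank-one tensor product with all factors of norm at most $1$ except for $x_k-y_k$ of norm at most $\varepsilon$; by factoring out this norm and using the definition of $\|Z\|_{\mathrm{inj}}$, the $k$-th contraction against $Z$ is bounded by $\varepsilon \|Z\|_{\mathrm{inj}}$. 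Summing gives $\|Z\|_{\mathrm{inj}} \le M_\varepsilon + p\varepsilon \|Z\|_{\mathrm{inj}}$, where $M_\varepsilon := \max_{\mathcal{N}_\varepsilon^p}|\langle y_1\otimes\cdots\otimes y_p, Z\rangle|$. Taking $\varepsilon = 1/(2p)$ and rearranging yields $\|Z\|_{\mathrm{inj}} \le 2 M_\varepsilon$.

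Third, a union bound over $|\mathcal{N}_\varepsilon|^p \le (6p)^{pn}$ tuples, combined with the $\mcN(0,1)$ tail bound, gives
\begin{equation*}
\Pr\bigl(M_\varepsilon > t\bigr) \le 2(6p)^{pn} e^{-t^2/2}.
\end{equation*}
Choosing $t = C\sqrt{n}$ with $C$ satisfying $C^2/2 > p \log(6p)$ makes the right-hand side summable in $n$, so by Borel--Cantelli, almost surely $n^{-1/2}\|Z\|_{\mathrm{inj}} \le 2C$ for all large $n$, which gives the claim with $C_p := 2C$. The argument is entirely routine; the only step requiring a small idea is the telescoping bound for $x^{\otimes p}-y^{\otimes p}$, but even that is a standard lemma in the concentration literature for tensors.
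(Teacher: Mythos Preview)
Your argument is correct and fully self-contained. The $\varepsilon$-net plus telescoping bound to pass from the net maximum to the full injective norm is the standard approach, and the application of Borel--Cantelli (first lemma, no independence across $n$ needed) is valid since the tail probabilities are summable.

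The paper, however, does not actually prove this lemma. It simply states it and remarks that it is a ``very crude version of much finer results,'' citing the spin-glass and tensor PCA literature (Crisanti--Sommers for the heuristic computation of the sharp constant $\mu_p$, Subag for a rigorous proof, and Richard--Montanari and Perry et al.\ for related discussions). So the paper's ``proof'' is a reference to known results that in fact give the exact almost-sure limit, not just a $\limsup$ bound. Your approach trades sharpness for elementarity: you obtain an explicit but loose $C_p = 2C$ with $C^2 > 2p\log(6p)$ via a covering argument, whereas the cited results identify the precise constant through considerably deeper machinery (Kac--Rice formulas, complexity of critical points). For the purposes of the paper only the crude bound is needed, so your route is entirely adequate and arguably more in keeping with a self-contained exposition.
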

\noindent
This lemma is a very crude version of much finer results: in particular the exact value of the constant $\mu_p$ such that (w.h.p.) 
$\max_{x\in \bbS^{n-1}} | \langle x^{\otimes p},Z \rangle | = \sqrt{n} \mu_p (1 + \smallO_n(1))$ has been first computed 
non-rigorously in \cite{crisanti1992sphericalp}, and proven in full generality in \cite{subag2017complexity} (see also 
discussions in \cite{richard2014statistical,perry2020statistical}).
In the rest of this proof, we assume to have conditioned on eq.~\eqref{eq:injectivenorm}.
For any $0 \leq s < t \leq 1$, we have for $n \geq n_0(Y)$:
\begin{align}
    \nonumber
    \frac{\Pi(\shire_s|Y)}{\Pi(\doom_t|Y)} &= \frac{\int_{\shire_s} \exp(\ell_Y(x)) \rd\Pi(x)}{\int_{\doom_t} \exp(\ell_Y(x)) \rd\Pi(x)}
    \leq e^{n \lambda C_p} \frac{\int_{\shire_s} \exp\Big(\frac{n}{2} \lambda^2 \langle x, x_0 \rangle^p \Big) \rd\Pi(x)}{\int_{\doom_t}  \exp\Big(\frac{n}{2} \lambda^2 \langle x, x_0 \rangle^p \Big) \rd\Pi(x)}, \\
    &\leq \exp\Big(n \lambda C_p + \frac{n\lambda^2}{2} [s^p - t^p]\Big) \frac{\Pi(\shire_s)}{\Pi(\doom_t)}.
    \label{eq:ratio_posteriors}
\end{align}
We upper bound $\Pi(\shire_s) \leq \Pi(\bbS^{n-1}) = 1$. To lower bound $\Pi(\doom_t)$, we use the elementary fact (which is easy to prove using spherical coordinates):
\begin{align}\label{eq:U_bt_bessel}
   \Pi(\doom_t) &= c_p I_{(1-t)/2}[(n-1)/2,(n-1)/2],
\end{align}
in which $I_x(a,b) = \int_0^x u^{a-1} (1-u)^{b-1} \rd u / \int_0^1 u^{a-1} (1-u)^{b-1} \rd u$ is the incomplete beta function, 
and $c_p = 1$ for odd $p$ and $c_p = 2$ for even $p$. 
It is then elementary analysis (cf.\ e.g.\ \cite{perry2020statistical}) that 
\begin{align}\label{eq:asymptotics_U_doomt}
    \lim_{n \to \infty} \frac{1}{n} \log \Pi(\doom_t) &= \frac{1}{2} \log (1-t^2),
\end{align}
uniformly in $t \in [0,1)$.
Coming back to eq.~\eqref{eq:ratio_posteriors}, this implies that we have, for any $s < t < 1$:
\begin{align}\label{eq:ratio_posteriors_2}
    \limsup_{n \to \infty} \frac{1}{n} \log \frac{\Pi(\shire_s|Y)}{\Pi(\doom_t|Y)} &\leq \lambda C_p + \frac{\lambda^2}{2} [s^p - t^p] - \frac{1}{2} \log (1-t^2).
\end{align}
Let $K > 0$. 
It is then elementary to see that it is possible to construct
$0 \leq s(\lambda) < t(\lambda) < 1$ with $\lim_{\lambda \to \infty} \{s(\lambda),t(\lambda)\} = 1$, 
and such that the right-hand-side of eq.~\eqref{eq:ratio_posteriors_2} becomes smaller than $-K$ as $\lambda \to \infty$.
\end{proof}

\section{Small ball estimates for isotropic Gaussians}\label{iso-small-ball}

Let $\Pi = \mcN(0, \Id_D/D)$.
In this section, we prove eq.~\eqref{pm}, more precisely we show:
\begin{lemma}\label{lemma:lower_bound_smallball_isotropic}
    Let $a \in (0,1)$. Then for all $D \geq D_0(a)$ large enough, one has for all $z \in (0, 1-a)$: 
    \begin{align}
        - \frac{1}{D} \log \Pi (\|\theta\|_2 \leq z) \geq \frac{1}{2} \Big(\frac{z^2}{2} - \log z - \frac{1}{2}\Big).
    \end{align}
\end{lemma}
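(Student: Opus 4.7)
\medskip
\noindent
\textbf{Proof plan for Lemma \ref{lemma:lower_bound_smallball_isotropic}.} The natural approach is to reduce the small-ball probability to a left-tail bound for a chi-squared random variable and then invoke Cram\'er's theorem / the classical Chernoff bound. Since $\theta \sim \mathcal{N}(0, I_D/D)$, we have $D\|\theta\|_2^2 \sim \chi^2_D$, so
\[
    \Pi(\|\theta\|_2 \leq z) \;=\; \Pr(\chi^2_D \leq Dz^2).
\]
For any $u > 0$, by Markov's inequality applied to $e^{-u\chi^2_D}$,
\[
    \Pr(\chi^2_D \leq Dz^2) \;\leq\; e^{uDz^2}\,\mathbb{E}[e^{-u\chi^2_D}] \;=\; e^{uDz^2}(1+2u)^{-D/2},
\]
using the known MGF of $\chi^2_D$.

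The plan is to optimise the right-hand side over $u > 0$. A short calculation shows that the optimiser is $u^\star = (1-z^2)/(2z^2)$, which is strictly positive for $z < 1$ (this is where the hypothesis $z \in (0, 1-a)$ enters: it keeps $u^\star$ bounded away from $0$, ensuring a genuinely exponential decay). Substituting $u^\star$ back yields
\[
    -\frac{1}{D}\log\Pi(\|\theta\|_2 \leq z) \;\geq\; I(z^2), \qquad I(x) \;:=\; \tfrac{1}{2}\bigl(x - 1 - \log x\bigr),
\]
so after simplification
\[
    -\frac{1}{D}\log\Pi(\|\theta\|_2 \leq z) \;\geq\; \frac{z^2}{2} - \log z - \frac{1}{2}.
\]
Since $t - 1 - \log t \geq 0$ for all $t > 0$, the right-hand side is nonnegative, and in particular dominates half of itself. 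This is exactly twice the bound asserted in \eqref{pm}, so the lemma follows with substantial slack.

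The proof presents no real obstacle: the factor $\tfrac{1}{2}$ in the statement is so generous that even if one preferred to argue via the explicit density of $\chi^2_D$ combined with Stirling's formula for $\Gamma(D/2)$ (a more elementary but slightly messier route), the subexponential Stirling error would be absorbed by picking $D \geq D_0(a)$ large enough, which is precisely the shape of hypothesis in the statement. The Chernoff route above avoids even this minor bookkeeping.
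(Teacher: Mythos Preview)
Your proof is correct and takes a genuinely different route from the paper's. The paper writes the small-ball probability explicitly in spherical coordinates, applies Stirling's formula to $\mathrm{vol}(\bbS^{D-1})$, and bounds the resulting radial integral using the monotonicity of $f(r)=-r^2/2+\log r+1/2$ on $(0,1)$; the $O((\log D)/D)$ correction coming from Stirling is precisely what forces the hypothesis $D\ge D_0(a)$, with the restriction $z\le 1-a$ guaranteeing that $f(z)$ stays bounded away from zero so this correction can be absorbed into the factor $\tfrac12$. Your Chernoff argument bypasses all of this bookkeeping: it yields the sharper inequality $-\tfrac{1}{D}\log\Pi(\|\theta\|\le z)\ge \tfrac{z^2}{2}-\log z-\tfrac12$ for \emph{every} $D\ge 1$ and every $z\in(0,1)$, so neither $D\ge D_0(a)$ nor $z\le 1-a$ is actually needed for the stated bound (they only matter if one wants the right-hand side bounded below by a fixed positive constant, which is not part of the lemma). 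One minor quibble: your remark that the constraint $z\le 1-a$ ``enters'' to keep $u^\star>0$ is slightly off---$u^\star>0$ holds for all $z<1$, and your inequality is valid right up to $z=1$ where it degenerates to $0\ge 0$. The paper's route is perhaps more transparently adaptable to other radial priors, while yours is shorter and strictly stronger here.
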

\begin{proof}[Proof of Lemma~\ref{lemma:lower_bound_smallball_isotropic} --]
Let $f(x) = -x^2/2 + \log x + 1/2$, so that $f$ reaches its maximum in $x = 1$, with $f(1) = 0$.
By decomposition into spherical coordinates and isotropy of the Gaussian measure, one has directly: 
\begin{align}
    \Pi (\|\theta\|_2 \leq z) &= \frac{\mathrm{vol}(\bbS^{D-1})}{(2\pi/D)^{D/2}} \int_0^z \rd r \, e^{-\frac{D r^2}{2} + (D-1) \log r}.
\end{align}
Recall that $\mathrm{vol}(\bbS^{D-1}) = 2 \pi^{D/2} / \Gamma(D/2)$, so one reaches easily: 
\begin{align}
    c_D = \frac{1}{D} \log \frac{\mathrm{vol}(\bbS^{D-1})}{(2\pi/D)^{D/2}} - \frac{1}{2} &= \frac{\log D}{2 D} + \mcO(1/D).    
\end{align}
In particular, one has for all $D$ large enough (not depending on $z$): 
\begin{align}
    \frac{1}{D} \log\Pi (\|\theta\|_2 \leq z) &\leq \frac{1}{D} \log\int_0^z \rd r \, e^{-\frac{r^2}{2} + (D-1) f(r)} + c_D.
\end{align}
Since $f$ is increasing on $(0,1)$, we have for large enough $D$:
\begin{align}
    \frac{1}{D} \log\Pi (\|\theta\|_2 \leq z) &\leq \Big(1 - \frac{1}{D}\Big) f(z) + c_D + \frac{1}{D} \log\int_0^\infty \rd r \, e^{-r^2/2}, \\ 
    &\leq \Big(1 - \frac{1}{D}\Big) f(z)  + \frac{\log D}{D} .
\end{align}
Since $f(1-a) < 0$, let $D \geq D_0(a)$ large enough such that $f(1-a) \leq - 2 \log D / (D-2)$. 
Then for all $z \leq 1-a$, one has $f(z) \leq - 2 \log D / (D-2)$.
Plugging it in the inequality above, we reach that for all $z \in (0,1-a)$:
\begin{align}
    \frac{1}{D} \log\Pi (\|\theta\|_2 \leq z) &\leq \frac{1}{2}f(z).
\end{align}

\end{proof}

\printbibliography

\end{document}